\documentclass[a4paper, 12pt]{amsart}
\usepackage{amsmath}
\usepackage{amssymb}
\usepackage{amsthm, enumerate}

\usepackage[top=20truemm, left=17truemm, right=17truemm]{geometry}
\usepackage{amscd}
\usepackage{ulem}
\usepackage{xcolor}
\usepackage[arrow,matrix,curve,cmtip,ps]{xy}
\usepackage{comment}

\usepackage{amscd}
\newtheorem{dfn}{Definition}[section]
\newtheorem{thm}[dfn]{Theorem}
\newtheorem{lem}[dfn]{Lemma}
\newtheorem{remark}[dfn]{Remark}
\newtheorem{cor}[dfn]{Corollary}

\newtheorem{prop}[dfn]{Proposition}
\newtheorem{ex}[dfn]{Example}
\newcommand{\Rist}[2]{\operatorname{Rist}_{#1}{(#2)}}


\title[Topological full groups arising from  Cuntz and Cuntz--Toeplitz algebras]{Topological full groups arising from Cuntz and Cuntz--Toeplitz algebras and their crossed products}
\author{Ryoya Arimoto}
\address{RIMS, Kyoto University, Sakyo-ku, Kyoto, 606-8502 Kyoto, Japan}
\email{arimoto@kurims.kyoto-u.ac.jp}

\author{Taro Sogabe}
\address{Graduate School of Science, Kyoto University, Sakyo-ku, Kyoto, 606-8502, Japan}
\email{sogabe.taro.3v@kyoto-u.ac.jp}

\date{}
\begin{document}

\begin{abstract}
    In this paper,
    we investigate the topological full groups arising from the Cuntz and Cuntz--Toeplitz algebras and their crossed products with the Cartan subalgebras of Cuntz and Cuntz--Toeplitz algebras.
    We study the normal subgroups and abelianization of these groups and completely determine the KMS states of the crossed products with respect to the canonical gauge actions.
\end{abstract}

\maketitle

\section{Introduction}
The Cuntz algebra $\mathcal{O}_n$ ($n\geq 2$), defined by J. Cuntz \cite{Cu}, has played an important role in the theory of operator algebras, especially in the classification theory of $\mathrm{C}^*$-algebras.
This algebra $\mathcal{O}_n$ is the universal $\mathrm{C}^*$-algebra generated by mutually orthogonal isometries $\{ S_1, \ldots, S_n \}$ that satisfy the Cuntz relation $\sum _{i=1}^n S_i S_i^* =1$, and is known to be separable, simple, nuclear, and purely infinite.
They have been studied from various perspectives, mainly from classification theory and the connections between groupoids.
Especially,
we are interested in the groups, called topological full groups, arising from the groupoid pictures for the Cuntz algebras.

Richard J. Thompson introduced the groups $F$, $T$, and $V$ in his unpublished note in 1965, motivated by constructing finitely presented groups with unsolvable word problems.
In the same note, he showed that these groups are all finitely presented, and $T$ and $V$ are simple, and this gave us the first examples of finitely presented infinite simple groups.
Since these groups have such interesting properties, they have attracted considerable attention, and their generalizations have also been studied.
The Higman--Thompson groups $V_n$, which were introduced by G. Higman in 1974 \cite{Hig}, are one of the generalizations.
These groups are known to appear in many different contexts, and in this paper, we will focus on the relationship with operator algebras, especially with groupoids.


The study of KMS states on a $\mathrm{C}^*$-algebra with a time evolution has been conducted by many researchers to date.
Though the notion of KMS states originated from physics, they have been studied from a mathematical motivation.
Let $A$ be a $\mathrm{C}^*$-algebra and $\gamma \colon \mathbb{R} \curvearrowright A$ be a $\mathbb{R}$-action on it.
We say that an $\gamma$-invariant state on $A$ is a $\mathrm{KMS}_\beta$-state if it satisfies a generalized tracial conditions (see Sec. \ref{gac}). 
In some cases, the structure of KMS states is completely determined.
For instance, D. Olesen and G. K. Pedersen showed that $\mathrm{KMS}_\beta$-state with respect to the canoncial gauge action on the Cuntz algebra $\mathcal{O}_n$ exists if and only if $\beta = \log n$ and the $\mathrm{KMS}_{\log n}$-state is unique.
The ground states, which can be interpreted as $\mathrm{KMS}_{\infty}$-states, have also been studied and are completely determined in some cases.
There are many other important previous results, including the Cuntz--Kireger algebras and the crossed products,
and the reader may refer to \cite{Nes, NY, VC, Evans, FEW, LRS} and references therein.

The first two notions, the Cuntz algebras and the Higman--Thompson groups, can be understood through groupoids.
Groupoids are regarded as a generalization of groups and topological dynamics.
From a (\'etale) groupoid $\mathcal{G}$, one can construct a reduced groupoid $\mathrm{C}^*$-algebra $\mathrm{C}^*_r (\mathcal{G})$ and a topological full group $[[ \mathcal{G}]]$.

Here, the groupoid C*-algebra is some kind of group ring, and these topological full groups are the groups of symmetries of the dynamics (see Sec. \ref{s2}).
There are so-called graph groupoids obtained from the directed graphs,
and we have a graph $O_n$ whose graph groupoid $\mathcal{G}_{O_n}$ realizes $\mathcal{O}_n\cong C^*_r(\mathcal{G}_{O_n})$ and $V_n\cong [[\mathcal{G}_{O_n}]]$.

By definition,
the topological full group naturally acts on the unit space $\mathcal{G}^{(0)}$ of the groupoid,
and the transformation groupoid $\mathcal{G}^{(0)} \rtimes [[\mathcal{G}]]$ 
and $\mathcal{G}$ are related via the canonical groupoid homomorphism $\mathcal{G}^{(0)} \rtimes [[\mathcal{G}]] \to \mathcal{G}$.
This leads us to believe that the reduced crossed product $C(\mathcal{G}^{(0)}) \rtimes _r [[ \mathcal{G} ]]$ 
and the reduced groupoid $\mathrm{C}^*$-algebra $\mathrm{C}^*_r(\mathcal{G})$ might share certain properties, which is the original motivation of this paper.
By using the full groupoid C*-algebras,
these algebras are related as follows, where the full and reduced crossed products coincide if the transformation groupoids are amenable, which is not the case in this paper:
\[\xymatrix{
C(\mathcal{G}^{(0)})\rtimes [[\mathcal{G}]]\ar[r]\ar[d]&C^*(\mathcal{G})\ar[d]\\
C(\mathcal{G}^{(0)})\rtimes_r[[\mathcal{G}]]&C^*_r(\mathcal{G})
}\]



\subsection*{Our results}

In this paper,
we will investigate Cuntz--Toeplitz analogue of the Higmann--Thompson groups obtained as a certain topological full group and the KMS states of their reduced crossed products.

\subsection*{Cuntz--Toeplitz analogue of $V_n$}

The Cuntz--Toeplitz algebra naturally appears in the theory of extensions of Cuntz algebras.
The generators $\{S_i\}_{i=1}^n$ of the Cuntz algebra $\mathcal{O}_n$ come from the creation operators on the Fock space.
These creation operators satisfy the Cuntz relation modulo compact operators, and the algebra generated by these creation operators, called the Cuntz--Toeplitz algebra $\mathcal{E}_n$, appears in the following extension
\[0\to \mathbb
K\to \mathcal{E}_n\to\mathcal{O}_n\to 0\]
where $\mathbb{K}$ denotes the algebra of compact operators on the Fock space (see Sec. \ref{cte}).
As in the case of $\mathcal{O}_n$,
we have a graph $E_n$ and its graph groupoid $\mathcal{G}_{E_n}$ realising $\mathcal{E}_n=C^*_r(\mathcal{G}_{E_n})$.
We will see that the unit space $\mathcal{G}_{E_n}^{(0)}$ of $\mathcal{G}_{E_n}$ is the union of the rooted $n$-regular tree $E^f_n$ with the root $v_0$ and the unit space $\mathcal{G}_{O_n}^{(0)}$.
This decomposition respects the above exact sequence of  Cuntz--Toeplitz extension.
In fact the subgroupoid of $\mathcal{G}_{E_n}$ obtained from $E^f_n$ is equal to $E^f_n\times E^f_n$ and one has $\mathbb{K}=C^*_r(E^f_n\times E^f_n)$.
These groupoids $E^f_n\times E^f_n, \; \mathcal{G}_{E_n}, \; \mathcal{G}_{O_n}$ yields the following exact sequence of the topological full groups
\[1\to [[E^f_n\times E^f_n]]\xrightarrow{i} [[\mathcal{G}_{E_n}]]\xrightarrow{\pi} V_n\to 1.\]
Note that the topological full group $[[E^f_n\times E^f_n]]$ is naturally identified with the group $\mathfrak{S}_{E^f_n}$ of finite permutations of the vertices of the tree $E^f_n$.

We study the abelianization $[[\mathcal{G}_{E_n}]]^{ab}$ and normal subgroups of $[[\mathcal{G}_{E_n}]]$.
Combining X. Li's result \cite{XinLi} and the simplicity of the commutator $V_n'$,
we prove the following (see Sec. \ref{main1}).

\begin{thm}[{Thm. \ref{ns}, Prop. \ref{abg}}]
\begin{enumerate}
\item The abelianization of $[[\mathcal{G}_{E_n}]]$ is $\mathbb{Z}/2\mathbb{Z}$ and computed as follows
\[i^{ab} \colon \mathfrak{S}_{E^f_{2n}}^{ab}\cong [[\mathcal{G}_{E_{2n}}]]^{ab},\quad \pi^{ab} \cong [[\mathcal{G}_{E_{2n+1}}]]^{ab}\cong V_{2n+1}^{ab},\]
where we write $f^{ab} \colon G^{ab}\to H^{ab}$ for the naturally induced map from $f \colon G\to H$.
\item For $n < \infty$,
    the non-trivial normal subgroups of $[[\mathcal{G}_{E_n}]]$ are $[[\mathcal{G}_{E_n}]]',\; [[R_\mathbb{N}]],\; [[R_\mathbb{N}]]'$,
    and the abelianization $[[\mathcal{G}_{E_n}]]^{ab}$ is $\mathbb{Z}/2\mathbb{Z}$.
    \end{enumerate}
\end{thm}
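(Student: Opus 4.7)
The plan is to exploit the short exact sequence
\[1\to \mathfrak{S}_{E^f_n}\xrightarrow{i}[[\mathcal{G}_{E_n}]]\xrightarrow{\pi}V_n\to 1\]
together with two classical facts: the only non-trivial proper normal subgroup of the infinite symmetric group of finite permutations is the alternating subgroup, so $\mathfrak{S}_{E^f_n}^{ab}\cong \mathbb{Z}/2\mathbb{Z}$ via the sign; and by Higman's theorem $V_n'$ is simple with $V_n^{ab}\cong \mathbb{Z}/2\mathbb{Z}$ for $n$ odd and $V_n^{ab}=0$ for $n$ even.

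For part (1), I would run the five-term homology exact sequence attached to this extension. Since $V_n$ acts trivially on $\mathfrak{S}_{E^f_n}^{ab}=\mathbb{Z}/2\mathbb{Z}$, it reads
\[H_2(V_n;\mathbb{Z})\to \mathbb{Z}/2\mathbb{Z}\xrightarrow{i^{ab}}[[\mathcal{G}_{E_n}]]^{ab}\xrightarrow{\pi^{ab}}V_n^{ab}\to 0.\]
For $n$ even (so $V_n^{ab}=0$), I would produce an explicit sign-type homomorphism $[[\mathcal{G}_{E_n}]]\to \mathbb{Z}/2\mathbb{Z}$ extending the sign on $\mathfrak{S}_{E^f_n}$; a natural construction, invoking X.~Li's framework, records the parity of the permutation an element induces on a sufficiently large finite subtree of $E^f_n$, where the even parity of $n$ is exactly the input needed to make the definition independent of the chosen subtree. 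This forces $i^{ab}$ to be an isomorphism. For $n$ odd (so $V_n^{ab}\cong\mathbb{Z}/2\mathbb{Z}$), I would instead exhibit a single transposition in $\mathfrak{S}_{E^f_n}$ as a commutator in $[[\mathcal{G}_{E_n}]]$, by conjugating against a lift of an $n$-cycle from $V_n$ whose odd length absorbs the sign, a construction parallel to Higman's original computation of $V_n^{ab}$. This parity-sensitive analysis is the main technical hurdle.

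For part (2), let $N$ be a non-trivial normal subgroup of $[[\mathcal{G}_{E_n}]]$. By Higman, $\pi(N)\trianglelefteq V_n$ equals $\{1\}$, $V_n'$, or $V_n$, and $N\cap \mathfrak{S}_{E^f_n}\trianglelefteq \mathfrak{S}_{E^f_n}$ equals $\{1\}$, $[[R_\mathbb{N}]]'$, or $[[R_\mathbb{N}]]$. When $\pi(N)=\{1\}$, $N$ lies in $\mathfrak{S}_{E^f_n}$ and equals $[[R_\mathbb{N}]]'$ or $[[R_\mathbb{N}]]$. When $\pi(N)\neq\{1\}$, I would choose $g\in N$ with $\pi(g)\neq 1$ and $\sigma\in \mathfrak{S}_{E^f_n}$ not commuting with $g$; then $[g,\sigma]\in N\cap \mathfrak{S}_{E^f_n}$ is non-trivial, forcing $N\supseteq [[R_\mathbb{N}]]'$. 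Passing to the quotient $[[\mathcal{G}_{E_n}]]/[[R_\mathbb{N}]]'$ and combining the simplicity of $V_n'$ with the abelianization computation of part (1), the only remaining possibility is the commutator $[[\mathcal{G}_{E_n}]]'$, and the classification is complete.
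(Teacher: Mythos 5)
Your overall skeleton agrees with the paper's: the extension $1\to\mathfrak{S}_{E^f_n}\to\Gamma_n\xrightarrow{\pi}V_n\to1$, right-exactness of abelianization (Prop.~\ref{ex}), the Schreier--Ulam classification of normal subgroups of $\mathfrak{S}_{E^f_n}$, Higman's results on $V_n$, and your commutator trick $[g,\sigma]\in N\cap\mathfrak{S}_{E^f_n}$ (a clean repackaging of Lem.~\ref{io}). However, the two steps you defer to ``natural constructions'' are exactly where all the work lies, and as sketched they do not go through. For the even case of (1): an element of $\Gamma_{2n}$ induces an \emph{infinite} permutation of $E^f_{2n}$ and preserves no finite subtree, so ``the parity of the induced permutation on a sufficiently large finite subtree'' is not defined; the obstruction to a well-defined sign is precisely the relation $1_{\{v_0\}}=(1-n)1_{Z(1)}$ in $H_0(\mathcal{G}_{E_n})$ (Ex.~\ref{1-n}), i.e.\ a vertex is ``worth'' $1-n$ cylinders, and one must prove multiplicativity of the resulting $\mathbb{Z}/2$-index. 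The paper obtains this from X.~Li's theorem (Thm.~\ref{AH}) plus the explicit unitary conjugation in Lem.~\ref{cd}; you would need to supply an equivalent argument. For the odd case of (1): conjugating a transposition by powers of a lifted $n$-cycle and multiplying only yields $t^n\equiv t$ modulo commutators, so it gives no information, and there is no ``shift'' in $\Gamma_n$ carrying $Z(\mu)$ onto $Z(\mu 1)$ while fixing the complement (compactness of $\partial E_n$ forbids it). The paper must pass to the stabilization $R_\mathbb{N}\times\mathcal{G}_{E_{2n+1}}$, where the transposition becomes a product of commutators by Lem.~\ref{cd}, and then compress back into $\Gamma_{2n+1}$ by conjugating with the bisection $W$ supported on $N$ disjoint cylinders (proof of Prop.~\ref{abg}).

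In part (2) there is a further gap for odd $n$. Once you know $N\supseteq\mathfrak{S}_{E^f_n}'$ and $\pi(N)\supseteq V_n'$, the case $N\cap\mathfrak{S}_{E^f_n}=\mathfrak{S}_{E^f_n}'$ with $\pi(N)=V_n'$ is \emph{not} excluded by the abelianization of $\Gamma_n$: since $\mathfrak{S}_{E^f_n}\subset\Gamma_n'$ for odd $n$, the group $\Gamma_n'$ is the full preimage of $V_n'$, and such an $N$ would be an index-$2$ normal subgroup of $\Gamma_n'$, compatible with $\Gamma_n^{ab}=\mathbb{Z}/2\mathbb{Z}$. Ruling it out requires the strictly finer statement that a transposition vanishes in $(\Gamma_n')^{ab}$, i.e.\ is a product of commutators of elements of $\Gamma_n'$; the paper derives this from $([[R_\mathbb{N}\times\mathcal{G}_{E_{2n+1}}]]')^{ab}=0$ (\cite[Cor.~6.10]{XinLi}) together with the same $W$-compression. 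Your appeal to ``the simplicity of $V_n'$ and the abelianization computation of part (1)'' does not close this case.
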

We note that the groupoids $\mathcal{G}_{E_n}$ are not minimal and their topological full groups $[[\mathcal{G}_{E_n}]]$ are not C*-simple and have normal subgroups other than the commutator subgroup.
There would be no other previous research on the topological full groups of the groupoids like $\mathcal{G}_{E_n}$. 

\subsection*{The gauge action and KMS states of $C(\mathcal{G}^{(0)})\rtimes_r[[\mathcal{G}]]$ analogus to that of $C^*_r(\mathcal{G})$}

The previous research on the gauge actions of $\mathcal{O}_n$ and $\mathcal{E}_n$ are generalized to the reseach on  the groupoid C*-algebra $C^*_r(\mathcal{G})$ and the $\mathbb{R}$-action $\gamma_c \colon \mathbb{R}\curvearrowright C^*_r(\mathcal{G})$ obtained from a cocycle $c \colon \mathcal{G}\to \mathbb{Z}$.
From the general theory of S. Neshveyev,
the $\gamma_c\textrm{-KMS}_\beta$-state is obtained by integrating a mesurable field of traces $\{\tau_x \colon C^*(\mathcal{G}_x^x)\to \mathbb{C}\}_{x\in \mathcal{G}^{(0)}}$ via $\beta$-conformal measure $m \colon C(\mathcal{G}^{(0)})\to \mathbb{C}$ (see \cite{Nes, VC}).
If the stabilizer $\mathcal{G}_x^x$ at $x\in\mathcal{G}^{(0)}$ is trivial (or the trace on "$C^*(\mathcal{G}_x^x)$" is trivial),
this boils down to the composition
\[C^*(\mathcal{G})\to C(\mathcal{G}^{(0)})\xrightarrow{m}\mathbb{C}\]
(see Thm. \ref{M}), where $C^*(\mathcal{G})\to C(\mathcal{G}^{(0)})$ is the conditional expectation.
In general,
the structure of the measurable field $\{\tau_x\}_{x\in \mathcal{G}^{(0)}}$ is complicated.

In this paper,
we forcus on the transformation groupoid $\mathcal{G}^{(0)}\rtimes [[\mathcal{G}]]$ (i.e., the crossed product $C(\mathcal{G}^{(0)})\rtimes_r [[\mathcal{G}]]$) and study the $\mathbb{R}$-action $\gamma_{c^f} \colon \mathbb{R}\curvearrowright C(\mathcal{G}^{(0)})\rtimes_r[[\mathcal{G}]]$ induced by the cocycle
\[c^f \colon \mathcal{G}^{(0)}\rtimes [[\mathcal{G}]]\to \mathcal{G}\xrightarrow{c}\mathbb{Z}.\]
The canonical gauge actions of $\mathcal{O}_n$ and $\mathcal{E}_n$ come from the canonical cocycle of the Deaconu--Renault groupoids
\[c_n \colon \mathcal{G}_{O_n}\to \mathbb{Z},\quad d_n \colon \mathcal{G}_{E_n}\to\mathbb{Z},\]
and we investigate the KMS states of the $\mathbb{R}$-actions on $C(\mathcal{G}_{O_n}^{(0)})\rtimes_r[[\mathcal{G}_{O_n}]]$ and $C(\mathcal{G}_{E_n}^{(0)})\rtimes_r[[\mathcal{G}_{E_n}]]$ induced by the cocycle $c^f_n$ and $d^f_n$.
\begin{thm}[see Thm. \ref{M}, \ref{MM}, \ref{MMM}, \ref{kms}]
\begin{enumerate}
\item There is a $\gamma_{c_n^f}$-KMS$_n$-state on $C(\mathcal{G}^{(0)}_{O_n}) \rtimes _r [[ \mathcal{G}_{O_n} ]]$ if and only if $\beta = \log n$.
There is a unique $\gamma_{c_n^f}$-KMS$_{\log n}$-state given by 
\[
C(\mathcal{G}^{(0)}_{O_n}) \rtimes _r [[ \mathcal{G}_{O_n} ]] \overset{E}{\to} C(\mathcal{G}_{O_n}^{(0)}) \overset{m}{\to} \mathbb{C}
\]
where $E$ is the canonical conditional expectation and $m$ is the product measure $\bigotimes_{k=1}^{\infty} (\sum _{j=1}^n \frac{1}{n} \delta _j)$ on $\mathcal{G}^{(0)}_{O_n} \cong \{ 1, 2, \ldots , n \}^{\infty}$.
There is no $\gamma _{c_n^f}$-ground state on $C(\mathcal{G}^{(0)}_{O_n}) \rtimes _r [[ \mathcal{G}_{O_n} ]]$.
\item There is no $\gamma _{d_{\infty}^f}$-KMS$_\beta$-state on $C(\mathcal{G}_{E_{\infty}}^{(0)}) \rtimes _r [[ \mathcal{G}_{E_\infty} ]]$ for $0 \leq \beta < \infty$.
There is a one-to-one correspondence between the $\gamma _{d_{\infty}^f}$-ground states and the states of $C_r^*([[ \mathcal{G}_{E_\infty} ]]_{v_0})$ where $v_0 \in \mathcal{G}_{E_\infty}^{(0)}$ is an element corresponds to the root of rooted $\infty$-regular tree.
\item  There is a $\gamma _{d_n^f}$-KMS$_{\beta}$-state on $C(\mathcal{G}_{E_n}^{(0)}) \rtimes _r [[ \mathcal{G}_{E_n}]]$ if and only if $\beta \geq \log n$.
For $\beta=\log n$, there is a one-to-one correspondence between $\gamma_{d_n^f}$-KMS$_{\log n}$-states and the tracial states of $C_r^*([[ R_\mathbb{N} ]])$.
For $\beta > \log n$, there is a one-to-one correspondence between $\gamma_{d_n^f}$-KMS$_{\beta}$-states and the tracial states of $C_r^*([[ \mathcal{G}_{E_n}]] _{v_0})$ where $v_0 \in \mathcal{G}_{E_n}^{(0)}$ is an element corresponds to the root of rooted $n$-regular tree.
There is a one-to-one correspondence between the $\gamma _{d_n^f}$-ground states on $C(\mathcal{G}_{E_n}^{(0)}) \rtimes _r [[ \mathcal{G}_{E_n}]]$ and the states of $C_r^*([[ \mathcal{G}_{E_n}]] _{v_0})$.
\end{enumerate}
\end{thm}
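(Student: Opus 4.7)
The plan is to apply the Neshveyev-type description of KMS states on groupoid C*-algebras (to be set up as Thm.~\ref{M}) to the transformation groupoid $\mathcal{H} := \mathcal{G}^{(0)} \rtimes [[\mathcal{G}]]$ with the induced cocycle $c^f$. A $\gamma_{c^f}$-KMS$_\beta$ state on $C(\mathcal{G}^{(0)}) \rtimes_r [[\mathcal{G}]]$ will decompose into (a) a $\beta$-conformal probability measure $m$ on $\mathcal{G}^{(0)}$, quasi-invariant for the $[[\mathcal{G}]]$-action with Radon--Nikodym cocycle $e^{-\beta c^f}$, together with (b) an $m$-measurable field of traces $\tau_x$ on the isotropy C*-algebras $C^*_r(\mathcal{H}^x_x) = C^*_r([[\mathcal{G}]]_x)$, where $[[\mathcal{G}]]_x$ is the stabilizer of $x$. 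The work is then three-fold: classify $\beta$-conformal measures on each $\mathcal{G}^{(0)}$; identify the stabilizers $[[\mathcal{G}]]_x$ on the support of $m$; and describe the compatible trace fields. Ground states fall out of the same picture in the $\beta\to\infty$ limit, but with arbitrary states (not necessarily traces) on the isotropy.

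For part~(1), on $\mathcal{G}_{O_n}^{(0)} \cong \{1,\dots,n\}^\infty$, I would first argue that a $\beta$-conformal measure for the $[[\mathcal{G}_{O_n}]]$-action is automatically $\beta$-conformal for $\mathcal{G}_{O_n}$: the full group $V_n$ contains enough cylinder-set bisections to realise the Deaconu--Renault orbit equivalence, so the two balance conditions agree. The classical Olesen--Pedersen computation then forces $\beta = \log n$ and identifies $m$ as the Bernoulli measure. For the stabilizers, $m$-almost every $x$ is aperiodic, and no non-identity element of $V_n$ can fix an aperiodic sequence (any $V_n$-element acts as a prefix-to-prefix substitution on some cylinder neighbourhood), so $[[\mathcal{G}_{O_n}]]_x$ is trivial $m$-a.e. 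The trace field is then trivial, the unique KMS state must be $m \circ E$, and no ground state can exist because $c^f_n$ takes negative values on every $[[\mathcal{G}_{O_n}]]$-orbit, obstructing the asymptotic vanishing required at $\beta = \infty$.

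For parts~(2)--(3), the key structural input is the decomposition $\mathcal{G}_{E_n}^{(0)} = E_n^f \sqcup \mathcal{G}_{O_n}^{(0)}$ (tree piece, discrete; Cantor piece, non-atomic), each invariant under $[[\mathcal{G}_{E_n}]]$ up to the factor $\mathfrak{S}_{E_n^f}$. Since $\mathfrak{S}_{E_n^f}$ acts transitively on vertices of each level and shifts between levels via $d_n^f$, the conformal condition forces $m|_{E_n^f}$ to be proportional to $\sum_{v \in E_n^f} e^{-\beta |v|} \delta_v$, whose total mass $\sum_{k\geq 0} n^k e^{-\beta k}$ is finite iff $\beta > \log n$ when $n < \infty$, and is infinite for every $\beta$ when $n = \infty$; simultaneously $m|_{\mathcal{G}_{O_n}^{(0)}}$ must be a multiple of the Bernoulli measure, which forces $\beta = \log n$ whenever this piece is non-zero. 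This gives the existence ranges in (2) and (3). At $\beta = \log n$ (finite $n$) the measure must be purely Bernoulli on the Cantor piece, where the $[[\mathcal{G}_{E_n}]]$-stabilizer of a generic point is (isomorphic to) the $[[R_\mathbb{N}]]$-isotropy; this yields the bijection with traces of $C^*_r([[R_\mathbb{N}]])$. For $\beta > \log n$, the measure is purely atomic on the tree, all vertices form a single $[[\mathcal{G}_{E_n}]]$-orbit, and the trace field collapses to one trace on $C^*_r([[\mathcal{G}_{E_n}]]_{v_0})$.

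Ground states on the Cuntz--Toeplitz side behave similarly: the $\beta \to \infty$ limit concentrates $m$ at the unique minimiser $v_0$ of $|v|$, and the KMS tracial compatibility degenerates so that arbitrary states on $C^*_r([[\mathcal{G}_{E_n}]]_{v_0})$ produce valid ground states via the GNS prescription, yielding the bijections in (2) and (3). The main technical obstacle is the explicit identification of the stabilizers $[[\mathcal{G}]]_x$ and the isotropy C*-algebras: for generic $x$ on the Cantor piece one must track how full-group elements act on infinite tails and recover the $[[R_\mathbb{N}]]$-structure, and at $v_0$ one must confirm that $[[\mathcal{G}_{E_n}]]_{v_0}$ is the naturally expected subgroup. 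A secondary obstacle is ruling out spurious KMS states at $\beta = \log n$ arising from convex mixtures of a Bernoulli-type and a tree-type conformal measure; I expect this to follow from ergodicity of the Bernoulli measure under $V_n$ together with the finiteness of the tree mass at $\beta = \log n$ being infinite, which forbids normalised mixed measures.
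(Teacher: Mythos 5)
Your overall framework (Neshveyev decomposition into a conformal measure plus a measurable field of traces on isotropy groups, then a case analysis of conformal measures on the tree part versus the Cantor part) is the same as the paper's, and your conformal-measure analysis is essentially correct. But there is a genuine gap at the heart of the argument: you assert that for $m$-a.e.\ (aperiodic) $x\in\partial O_n$ the stabilizer $(V_n)_x$ is trivial because ``any $V_n$-element acts as a prefix-to-prefix substitution on some cylinder neighbourhood.'' This is false. An element such as $S_\mu S_\mu^*+\sum_i S_{\mu_i}S_{\nu_i}^*$ acts as the \emph{identity} substitution on the cylinder $Z^\infty(\mu)$ while permuting its complement nontrivially, so it fixes every point of $Z^\infty(\mu)$, aperiodic or not. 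In fact $(V_n)_x=\bigcup_k\operatorname{Fix}_{V_n}(Z^\infty(x_1\cdots x_k))$ is an enormous group (each $\operatorname{Fix}_{V_n}(Z^\infty(\mu))$ is isomorphic to $V_n$). The entire difficulty of part (1) — and the reason the paper invokes the Le Boudec--Matte Bon / Breuillard--Kalantar--Kennedy--Ozawa machinery — is to show that these huge stabilizers have the \emph{unique trace property} (via the faithful boundary action $\operatorname{Fix}_{V_n}(Z^\infty(\mu))\curvearrowright \partial O_n\setminus Z^\infty(\mu)\cong V_n\curvearrowright\partial O_n$), so that the trace field is forced to be the canonical one and the KMS state collapses to $m\circ E$. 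Without this, uniqueness of the KMS$_{\log n}$ state does not follow.

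The same error propagates into part (3) at $\beta=\log n$: you claim the stabilizer of a generic boundary point $x\in E_n^\infty$ ``is (isomorphic to) the $[[R_\mathbb{N}]]$-isotropy.'' It is not; it equals $\pi^{-1}((V_n)_x)$, an extension of the huge group $(V_n)_x$ by $\mathfrak{S}_{E_n^f}$. What is true — and what must be proved — is that every trace on $C^*_r(\pi^{-1}((V_n)_x))$ factors through $C^*_r(\mathfrak{S}_{E_n^f})$, which again requires the unique trace property of $(V_n)_x$ together with \cite[Thm.~4.1]{BKKO} identifying $\mathfrak{S}_{E_n^f}$ as the amenable radical. One must then also show that the resulting trace on $C^*_r(\mathfrak{S}_{E_n^f})$ is automatically $\Gamma_n$-invariant (so that the measurable field is constant); the paper does this by a centralizer argument (for each $h\in\mathfrak{S}_{E_n^f}$ and $g\in\Gamma_n$ one factors $g=c_h(g)h(g)$ with $c_h(g)$ commuting with $h$ and $h(g)\in\mathfrak{S}_{E_n^f}$). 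Neither of these steps appears in your outline, and the bijection with traces of $C^*_r([[R_\mathbb{N}]])$ does not follow from your stated premises. The remaining pieces (existence ranges for $\beta$, the atomic measure for $\beta>\log n$, and the ground-state correspondences via the boundary set of the cocycle) are consistent with the paper, though your ground-state argument should be made rigorous via the Phragm\'en--Lindel\"of estimate or the Laca--Larsen--Neshveyev theorem rather than a ``$\beta\to\infty$ limit'' heuristic.
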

What is interesting is that the structure of measurable field $\{\tau_x\}_{x\in \mathcal{G}^{(0)}}$ in the S. Neshveyev's picture tends to be easy in spite of the huge stabilizer $(\mathcal{G}^{(0)}\rtimes [[\mathcal{G}]])_x^x$.
This happens because of the unique trace property of $V_n$ by which the traces $\tau_x \colon C^*_r((V_n)_x)\to\mathbb{C}$ on the stabilizers are all canonical traces and the traces on the stabilizer $(\mathcal{G}_{E_n}^{(0)}\rtimes [[\mathcal{G}_{E_n}]])_x^x$ all factor through \[(\mathcal{G}_{E_n}^{(0)}\rtimes \mathfrak{S}_{E^f_n})_x^x=\mathfrak{S}_{E^f_n}.\]
This reads us to conclude that S. Neshveyev's measurable field comes from  a trace on \[\int_{\mathcal{G}_{E_n}^{(0)}} \tau_x dm(x) \colon C(\mathcal{G}_{E_n})\rtimes_r \mathfrak{S}_{E^f_n}=C(\mathcal{G}_{E_n}^{(0)})\otimes C^*_r(\mathfrak{S}_{E^f_n})\to \mathbb{C}.\]
Furthermore,
we show that the centralizer of $h\in \mathfrak{S}_{E^f_n}$ in $[[\mathcal{G}_{E_n}]]$ surjects to $V_n$ by which we can conclude the measurable field is constant (i.e., $\int_{\mathcal{G}_{E_n}^{(0)}} \tau_x dm(x)=m\otimes \tau$).

As explained above,
the unique trace property of $V_n$ plays an important role.
For $\mathcal{G}=\mathcal{G}_{O_n}, \mathcal{G}_{E_n}$, the groupoid $\mathcal{G}^{(0)}\rtimes[[\mathcal{G}]]$ tends to have the large isotropies and each isotropy tends to have fewer traces by the unique trace property that enables us to compute the KMS states by using the strategy of  \cite{Nes, P}.

\section*{Acknowledgement}
We would like to thank Fuyuta Komura, Yosuke Kubota, and Takuya Takeishi for their helpful comments and discussions.
T. Sogabe is supported by JSPS KAKENHI Grant Number 24K16934.


\section{Preliminaries}\label{s2}

\subsection{Cuntz algebras and Cuntz--Toeplitz algebras}\label{cte}
\begin{dfn}[{cf. \cite{Cu}}]
    For $2 \leq n < \infty$, the Cuntz algebra $\mathcal{O}_n$ is the universal C*-algebra generated by the isometries $S_1,\cdots, S_n$ with mutually orthogonal ranges (i.e., $S_i^*S_j=\delta_{i, j} 1_{\mathcal{O}_n}$) satisfying
    \[S_1S_1^*+\cdots +S_nS_n^*=1_{\mathcal{O}_n},\]
    where $1_{\mathcal{O}_n}$ is the unit of $\mathcal{O}_n$.
    
    For $2 \leq n < \infty$, the Cuntz--Toeplitz algebra $\mathcal{E}_n$ is the universal C*-algebra generated by the isometries $T_1, \cdots, T_n$ with mutually orthogonal ranges (i.e., $T_i^*T_j=\delta_{i, j} 1_{\mathcal{E}_n}$).
    
    The infinite Cuntz algebra $\mathcal{O}_\infty$ is the universal C*-algebra generated by the isometries $\{T_i\}_{i=1}^\infty$ with mutually orthogonal ranges (i.e., $T_i^*T_j=\delta_{i, j} 1_{\mathcal{O}_\infty}$). 
\end{dfn}
We write \[e_n:=1_{\mathcal{E}_n}-\sum_{i=1}^nT_iT_i^*\in\mathcal{E}_n.\]
By the universality,
one has the following $*$-homomorphisms
\[\pi \colon \mathcal{E}_n\ni T_i\mapsto S_i\in \mathcal{O}_n,\]
\[\mathcal{E}_n\ni T_i\mapsto T_i\in \mathcal{O}_\infty, \quad \mathcal{O}_\infty=\overline{\bigcup_{n=2}^\infty\mathcal{E}_n}.\]
Since $T_{\mu_1}\cdots T_{\mu_s} e_n T_{\nu_t}^*\cdots T_{\nu_1}^*, \; \mu_i, \nu_j\in \{1, \cdots, n\}$ provides a matrix unit,
the ideal of $\mathcal{E}_n$ generated by $e_n$ is isomorphic to $\mathbb{K}$, and this is contained in $\operatorname{Ker}\pi$ because $\pi (e_n)=1_{\mathcal{O}_n}-\sum_{i=1}^n S_iS_i^*=0$.
For the map
\[\mathcal{E}_n/\mathbb{K}\to\mathcal{E}_n/\operatorname{Ker}\pi=\mathcal{O}_n,\]
the simplicity (see \cite{Cu}) and universality of the Cuntz algebra give the inverse \[\mathcal{O}_n\ni S_i\mapsto \bar{T}_i\in\mathcal{E}_n/\mathbb{K}\] which implies $\mathbb{K}=\operatorname{Ker}\pi$,
and one has the extension
\[0\to\mathbb{K}\to\mathcal{E}_n\xrightarrow{\pi}\mathcal{O}_n\to 0.\]

The Cuntz--Toeplitz algebra $\mathcal{E}_n$ has the Fock representation
\[\mathcal{E}_n \subset \mathbb{B}(\mathcal{F}(\mathbb{C}^n)), \quad \mathcal{F}(\mathbb{C}^n):=\mathbb{C}\oplus\bigoplus_{k=1}^\infty (\mathbb{C}^n)^{\otimes k},\]
\[T_i \colon \mathbb{C}\ni z\mapsto z\delta_i\in \mathbb{C}^n,\;\; (\delta_i : \text{$i$-th orthogonal basis}),\]
\[T_i \colon (\mathbb{C}^n)^{\otimes k}\ni \zeta\mapsto \delta_i\otimes\zeta\in (\mathbb{C}^n)^{\otimes k+1}.\]
We will identify $\mathcal{F}(\mathbb{C}^n)$ with the Hilbert space $\ell^2(\{v_0\}\cup\bigcup_{k=1}^\infty\{1, \cdots, n\}^k)$ whose orthogonal basis are denoted by \[\{\delta_\mu\}_{_\mu\in \{v_0\}\cup\bigcup_{k=1}^\infty\{1,\cdots, n\}^k}.\]
The definition of \[\mathcal{F}(\ell^2(\mathbb{N}))=\ell^2\left(\{v_0\}\cup\bigcup_{k=1}^\infty\mathbb{N}^k\right)\] is the same as above and this provides the Fock representation of $\mathcal{O}_\infty$.


\subsection{\'{E}tale groupoids and topological full groups}\label{fgr}
We refer \cite{REN} for the basics of the groupoids and their C*-algebras.
We also refer to \cite[Sec. 2.3., Sec. 3.]{NO} for the \'{e}tale groupoids and their topological full groups.
A topological groupoid is a pair of topological spaces $\mathcal{G}^{(0)}\subset\mathcal{G}$ with the following continuous structure maps
\[\text{Range and Source maps}\quad r, s \colon \mathcal{G}\to\mathcal{G}^{(0)},\quad r(x)=s(x)=x, \; x\in \mathcal{G}^{(0)},\]
\[\text{Associative multiplication map}\quad \mathcal{G}^{(2)}:=\{(g, h)\in \mathcal{G}\times\mathcal{G}\;|\; s(g)=r(h)\}\ni (g, h)\mapsto gh\in\mathcal{G},\]
\[ g=r(g)g=gs(g),\quad r(gh)=r(g),\quad s(gh)=s(h),\]
\[\text{Inverse map} \;\; \mathcal{G}\ni g\mapsto g^{-1}\in\mathcal{G},\quad gg^{-1}=r(g),\quad g^{-1}g=s(g).\]
We say the groupoid $\mathcal{G}$ is locally compact Hausdorff if so is the topological space $\mathcal{G}$.
The groupoid $\mathcal{G}$ is called minimal, if $\{r(g)\in \mathcal{G}^{(0)} \mid g\in\mathcal{G}, s(g)=x\}$ is a  dense subset of $\mathcal{G}^{(0)}$ for every $x\in\mathcal{G}^{(0)}$.
A topological groupoid is called \'{e}tale if the range and source maps are local homeomorphisms.
Note that $\mathcal{G}^{(0)}$ is an open subset of $\mathcal{G}$ for any  \'{e}tale groupoid.
An open subset $U\subset \mathcal{G}$ is called open bisection if $r|_U \colon U\to r(U)$ and $s|_U \colon U\to s(U)$ are homeomorphisms,
and an \'{e}tale groupoid has the open basis consisting of open bisections.
An \'{e}tale groupoid is called ample if $\mathcal{G}^{(0)}$ is Hausdorff and has an open basis consisting of compact open sets.
Note that the locally compact Hausdorff space is totally disconnected if and only if it has an open basis consisting of clopen sets.
We basically consider locally compact Hausdorff, ample groupoids.

A bisection $U\subset\mathcal{G}$ is called full bisection,
if $s(U)=r(U)=\mathcal{G}^{(0)}$.
For two full bisections $U, V\subset\mathcal{G}$,
$UV:=\{uv\in\mathcal{G}\;|\; u\in U, v\in V, \;(u, v)\in\mathcal{G}^{(2)}\}$ is also a full bisection.
Let $\operatorname{supp}U:=\{x\in \mathcal{G}^{(0)}\;|\; s^{-1}(x)\cap U\not=\{x\}\}$, then $\operatorname{supp}UV$ and $\operatorname{supp}U^{-1}$ are compact if $\operatorname{supp}U$ and $\operatorname{supp}V$ are compact.
Thus,  the set of full bisection with compact support, denoted by $[[\mathcal{G}]]$, is a group where the unit is $\mathcal{G}^{(0)}$ and the inverse of $U$ is given by $U^{-1}:=\{u^{-1}\in\mathcal{G}\;|\; u\in U\}$.
Since $U\in[[\mathcal{G}]]$ defines a homeomorphism $(s(U)\xrightarrow{r|_U\circ s|_U^{-1}} r(U))\in\operatorname{Homeo}(\mathcal{G}^{(0)})$,
there is a group homomorphism $[[\mathcal{G}]]\to\operatorname{Homeo}(\mathcal{G}^{(0)})$.

A groupoid $\mathcal{G}$ is topologically principal if the set 
\[\{x\in\mathcal{G}^{(0)}\;|\; \{g\in\mathcal{G}\;|\; r(g)=s(g)=x\}=\{x\}\}\]
is dense in $\mathcal{G}^{(0)}$.
A groupoid $\mathcal{G}$ is called essentially principal (or effective) if the interior of the set
\[\{g\in\mathcal{G}\;|\; r(g)=s(g)\}\]
is equal to $\mathcal{G}^{(0)}$.
By \cite[Prop. 3.1.]{REN2},
topological principality and essential principality (effectiveness) are equal for the second countable \'{e}tale groupoids.

For a topologically principal, locally compact Hausdorff, ample groupoid $\mathcal{G}$,
the map $[[\mathcal{G}]]\to\operatorname{Homeo}(\mathcal{G}^{(0)})$ is injective (see \cite[Lem. 3.1.]{NO}).
So, we will identify $[[\mathcal{G}]]$ with the subgroup of $\operatorname{Homeo}(\mathcal{G}^{(0)})$ frequently in the subsequent sections,
and write $U (x):=r|_U\circ s|_U^{-1}(x),\; x\in \mathcal{G}^{(0)}$ for short.

The multiplication of $\mathcal{G}$ induces the convolution product of the set $C_c(\mathcal{G})$ of the $\mathbb{C}$-valued, compactly supported, continuous functions on $\mathcal{G}$ by
\[1_U\cdot 1_V:=1_{UV},\]
where $U, V$ are clopen bisections and $1_U$ is the characteristic function of $U$.
If $\mathcal{G}^{(0)}$ is compact $1_{\mathcal{G}^{(0)}}$ is the unit of $C_c(\mathcal{G})$,
and we have a multiplicative injection
\[[[\mathcal{G}]]\ni U\mapsto 1_U\in C_c(\mathcal{G})\]
(see Lem. \ref{matui}).

By taking appropriate completion,
one obtains the reduced (resp. full) groupoid C*-algebra $C^*_r(\mathcal{G})$ (resp. $C^*(\mathcal{G})$) (see \cite{REN}).
The involution of this C*-algebra is given by $(1_U)^*:= 1_{U^{-1}}$.
There is a distinguished subalgebra
\[C(\mathcal{G}^{(0)})=\overline{\{f\in C_c(\mathcal{G})\;|\; \operatorname{supp}f\subset \mathcal{G}^{(0)}\}},\]
and the adjoint action of $U\in [[\mathcal{G}]]$ satisfies
\[\operatorname{Ad}1_U(f):=1_U\cdot f\cdot (1_U)^*\in C(\mathcal{G}^{(0)}), \quad 1_U\cdot f\cdot (1_U)^*(x)=f(U^{-1}(x)),\;\; x\in \mathcal{G}^{(0)}.\]

For a unital C*-algebra $A$,
we write $U(A):=\{u\in A\;|\; uu^*=1=u^*u\}$.
The topological full group $[[\mathcal{G}]]$ is understood as a subgroup of the unitary group $[[\mathcal{G}]]\subset U(C^*_r(\mathcal{G}))$ if $\mathcal{G}^{(0)}$ is compact.

\begin{ex}
    For a countable set $F$ with the discrete topology, we write \[R_F:=F\times F, R_F^{(0)}:=\{(m, m)\in R_F\}= F,\quad   r(m, n):=m, s(m, n):=n,\]  \[R_F^{(2)}:=\{((m, n), (n, k))\in R_F\times R_F\}, \quad (m, n)(n, l)=(m, l),\quad (m, n)^{-1}:=(n, m).\]
We denote by $\mathfrak{S}_F$ the group of finite permutations of $F$,
and this is naturally identified with $[[R_F]]$.
For the convolution algebra $C_c(R_F)$,
the elements $\{1_{(m, n)}\}_{m, n\in F}$ satisfing
\[1_{(m, n)}\cdot 1_{(l, k)}=\delta_{n, l}1_{(m, k)}\]
form a matrix unit,
and one has $C^*_r(R_F)=C^*(R_F)=\mathbb{K}(\ell^2(F))$ where $\mathbb{K}(\ell^2(F))$ is the algebra of compact operators on the Hilbert space $\ell^2(F)$.
One has $C_0(R_F^{(0)})=c_0(F)\subset \ell^\infty(F)$.
If $F$ is a finite set,
this gives an embedding of the symmetric group $\mathfrak{S}_F$ into the algebra of $|F| \times |F|$ matrices $\mathbb{M}_{|F|}(\mathbb{C})=C^*_r(R_F)$.

Note that, for $F=\mathbb{N}$, $R_\mathbb{N}^{(0)}$ is not compact (i.e., $C^*_r(R_\mathbb{N})=\mathbb{K}$ is non-unital), and $[[R_\mathbb{N}]]=\mathfrak{S}_\mathbb{N}$ is a subset of $1+\mathbb{K}$.
\end{ex}

\begin{lem}[{\cite[Prop. 5.6.]{matui}}]\label{matui}
    Let $\mathcal{G}$ be a topologically principal, second countable, \'{e}tale groupoid whose unit space $\mathcal{G}^{(0)}$ is compact.
    Let
    \[N(C^*_r(\mathcal{G}), C(\mathcal{G}^{(0)})):=\{u\in U(C^*_r(\mathcal{G}))\;|\; uC(\mathcal{G}^{(0)})u^*=C(\mathcal{G}^{(0)})\}\]
    be the normalizer of $C(\mathcal{G}^{(0)})$ in $\mathrm{C}^*_r(\mathcal{G})$.
    \begin{enumerate}
        \item There is an exact sequence
        \[1\to U(C(\mathcal{G}^{(0)}))\to N(C^*_r(\mathcal{G}), C(\mathcal{G}^{(0)}))\xrightarrow{\sigma} [[\mathcal{G}]]\to 1.\]
        \item The above map $N(C^*_r(\mathcal{G}), C(\mathcal{G}^{(0)}))\to [[\mathcal{G}]]$ is induced by identifying the adjoint action $\operatorname{Ad} u \in\operatorname{Aut}(C(\mathcal{G}^{(0)}))$ with a homeomorphism $\sigma(u)\in \operatorname{Homeo}(\mathcal{G}^{(0)})$ as follows:
       \[  f(\sigma(u)^{-1}(x)) := ufu^*(x), \quad f\in C(\mathcal{G}^{(0)}), \;\; u\in N(C^*_r(\mathcal{G}), C(\mathcal{G}^{(0)})).\]        
        \item The map $\sigma$ has a splitting \[[[\mathcal{G}]]\ni U\mapsto 1_U\in N(C^*_r(\mathcal{G}), C(\mathcal{G}^{(0)}))\] and $\sigma (1_U)=U, \;U\in [[\mathcal{G}]]$ holds.
    \end{enumerate}
    \end{lem}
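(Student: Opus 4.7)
The plan is to verify the three parts in the order (3), (2), (1), since later parts depend on earlier ones. For part (3), given $U\in[[\mathcal{G}]]$, the convolution formula gives $1_U\cdot 1_{U^{-1}}=1_{UU^{-1}}=1_{\mathcal{G}^{(0)}}$ because $U$ is a full bisection; compactness of $\mathcal{G}^{(0)}$ makes $1_{\mathcal{G}^{(0)}}$ the unit of $C^*_r(\mathcal{G})$, so $1_U$ is unitary. A direct convolution computation yields $(1_U\cdot f\cdot 1_{U^{-1}})(x)=f(U^{-1}(x))$ for $f\in C(\mathcal{G}^{(0)})$ and $x\in\mathcal{G}^{(0)}$, which simultaneously shows that $1_U$ normalizes $C(\mathcal{G}^{(0)})$ and that the map $\sigma$ defined in (2) satisfies $\sigma(1_U)=U$. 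For part (2), any $u\in N(C^*_r(\mathcal{G}),C(\mathcal{G}^{(0)}))$ restricts by definition to an automorphism of the commutative algebra $C(\mathcal{G}^{(0)})$, and Gelfand duality then produces the homeomorphism $\sigma(u)\in\operatorname{Homeo}(\mathcal{G}^{(0)})$ specified by the displayed formula; functoriality of the Gelfand correspondence makes $u\mapsto\sigma(u)$ a group homomorphism.

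The real content, from which the rest of (1) follows, is to upgrade ``$\sigma(u)\in\operatorname{Homeo}(\mathcal{G}^{(0)})$'' to ``$\sigma(u)\in[[\mathcal{G}]]$''. My approach would invoke that under topological principality the \'etale-ample structure makes $C(\mathcal{G}^{(0)})\subset C^*_r(\mathcal{G})$ a Cartan inclusion in Renault's sense: maximal abelian, equipped with a faithful conditional expectation $E$, and with an injective canonical map $j\colon C^*_r(\mathcal{G})\to C_0(\mathcal{G})$ extending the identity on $C_c(\mathcal{G})$. I would then argue that the open support $W:=\{g\in\mathcal{G}\mid j(u)(g)\neq 0\}$ is an open bisection: if two distinct arrows $g_1\neq g_2$ lay in $W$ with a common source $s(g_1)=s(g_2)$, then choosing disjoint open bisection neighbourhoods of $g_1$ and $g_2$ and evaluating the normalizer relation $ufu^*\in C(\mathcal{G}^{(0)})$ at a unit with trivial isotropy (dense by topological principality) would force $j(u)(g_1)=0$ or $j(u)(g_2)=0$, a contradiction. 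Unitarity of $u$ then forces $s(W)=r(W)=\mathcal{G}^{(0)}$, and the ample structure together with compactness of $\mathcal{G}^{(0)}$ yields that $W$ is in fact a compact open full bisection, which one checks realizes $\sigma(u)$.

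The exact sequence (1) now follows easily. The map $\sigma$ is a well-defined group homomorphism into $[[\mathcal{G}]]$, surjective by the splitting constructed in (3). For the kernel, $\sigma(u)=\mathrm{id}$ means $u$ commutes with $C(\mathcal{G}^{(0)})$; the maximal abelianness (from topological principality, equivalently effectiveness by \cite{REN2}) then forces $u\in C(\mathcal{G}^{(0)})$, hence $u\in U(C(\mathcal{G}^{(0)}))$, and the reverse inclusion is trivial. The main obstacle I foresee is the bisection-support argument in the second paragraph: passing from the abstract algebraic normalizer condition to a concrete compact open bisection in $\mathcal{G}$ requires using topological principality carefully to rule out ``Fourier support'' of $u$ spreading across more than one arrow above a single unit, and the precise bookkeeping of the conditional expectation $E$ against small bisection neighbourhoods is where the work is concentrated.
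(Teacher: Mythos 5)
The paper offers no proof of this lemma: it is quoted verbatim from Matui (\cite[Prop.~5.6.]{matui}), whose argument in turn rests on Renault's Cartan-subalgebra machinery from \cite{REN2}. Your proposal reconstructs exactly that standard route --- unitarity of $1_U$ via $1_U\cdot 1_{U^{-1}}=1_{\mathcal{G}^{(0)}}$, Gelfand duality for $\sigma$, the open support of $j(u)$ being a bisection, and maximal abelianness of $C(\mathcal{G}^{(0)})$ identifying the kernel --- so it is correct and essentially the same approach as the cited source. The only place where your sketch is thinner than the actual argument is the bisection-support step: for fixed $g_1\neq g_2$ with $s(g_1)=s(g_2)$ the product $g_1g_2^{-1}$ may lie in the isotropy of a point that does \emph{not} have trivial isotropy, so one cannot literally ``evaluate at'' such a unit; the standard fix (as in Renault) is to shrink to open bisection neighbourhoods $V_1\ni g_1$, $V_2\ni g_2$, note that the normalizer condition forces $j(u)\ast f\ast j(u)^*$ to vanish off the isotropy bundle, and then use effectiveness (equivalent to topological principality here by \cite[Prop.~3.1.]{REN2}) to conclude that the open set $V_1V_2^{-1}$ meets $\mathcal{G}^{(0)}$, yielding the contradiction on a dense set of nearby units. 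With that bookkeeping supplied, your argument is complete; note also that compactness of $W$ needs only that $s|_W$ is a homeomorphism onto the compact space $\mathcal{G}^{(0)}$, not ampleness, though ampleness is what makes $1_W$ (and hence the splitting in (3)) land in $C_c(\mathcal{G})$.
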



\subsection{Graph groupoids for $\mathcal{O}_n$}
We refer to \cite[Sec. 8]{NO} for the basics and terminologies of the graphs, their boundary path spaces, and the graph groupoids.

Let $O_n:=(O_n^0, O_n^1, r, s)$ be the following graph
\[O_n^0:=\{V\}, \quad, O_n^1:=\{e_1,\cdots, e_n\},\quad r(e_i)=s(e_i)=V.\]

\[   \xymatrix{\vdots&\ar@(u, ur)[]^{e_1} \ar@(ul,
u)[]^{e_2} \ar@(l, lu)[]^{e_3} \ar@(ld, l)[]^{e_n} {\cdot}_V }
\]

Since $(O_n^0)_{\text{sing}}=\emptyset$, the boundary path space (see \cite[Sec. 8.1.]{NO}) is identified with
\[\partial O_n=O_n^\infty\ni e_{\mu_1}e_{\mu_2}\cdots\; \mapsto \mu_1\mu_2\cdots\; \in \{1, \cdots, n\}^\infty.\]
The shift map $\sigma_{O_n} \colon \partial O_n\ni \mu_1\mu_2\mu_3\cdots\;\mapsto \mu_2\mu_3\cdots\;\in\partial O_n$ defines the following Deaconu--Renault groupoid
\[\mathcal{G}_{O_n}:=\{(x, k-l, y)\in\partial O_n\times\mathbb{Z}\times \partial O_n\;|\; \sigma_{O_n}^k(x)=\sigma_{O_n}^l(y),\; k, l\in\mathbb{Z}_{\geq 0}\},\]
with the base space $\mathcal{G}^{(0)}_{O_n}:=\{(x, 0, x)\;|\; x\in\partial O_n\}=\partial O_n$
and the structure maps
\[s(x, k, y)=y,\; r(x, k, y)=x,\;\; (x, k, y)\cdot (y, l, z)=(x, k+l, z).\]
A clopen basis of $\partial O_n$ is given by the cylinder set
\[Z^\infty(\mu):=\{\mu x\in\partial O_n\;|\; x\in\partial O_n\}, \quad \mu \in \{v_0\}\cup\bigcup _{k \geq 1} \{ 1, \ldots , n \} ^k.\]
The groupoid $\mathcal{G}_{O_n}$ becomes a locally compact, second countable, topologically principal, ample groupoid via the following open basis
\[Z(Z^\infty(\mu), |\mu|, |\nu|, Z^\infty(\nu)):=\{(\mu x, |\mu|-|\nu|, \nu x)\; |\; x\in \partial O_n\}, \quad \mu, \nu \in \{v_0\}\cup\bigcup _{k \geq 1} \{ 1, \ldots , n \} ^k,\]
where we write $|\mu|=|\mu_1\cdots\mu_k|=k$ for $\mu_i\in \{1, \cdots, n\}$.
Note that we use the convention $v_0 x=x,\; |v_0|=0,\;\; Z^\infty(v_0)=\partial O_n$.

By the universality of Cuntz relation,
one has a map
\[\mathcal{O}_n\ni S_i\mapsto 1_{Z(Z^\infty(i), 1, 0, Z^\infty(v_0))}\in C^*_r(\mathcal{G}_{O_n})\]
which is surjective because $C_r^*(\mathcal{G}_{O_n})$ is generated by the characteristic function of clopen bisection $1_{Z(Z^\infty(\mu), |\mu|, |\nu|, Z^\infty(\nu))}$ which  is the image of $S_\mu S_{\nu}^*$.
We use the convention $S_{v_0}=1$.
Since $\mathcal{O}_n$ is simple, the above map is an isomorphism.

For the graph $O_n$,
the graph C*-algebra is given by the following universal C*-algebra
\[C^*(O_n):=C^*_{\text{univ}}(\{S_{e_i}, P_V \mid P_V^2 = P_V = P_V^*, \; S^*_{e_i}S_{e_i}=P_V,\; \sum_{i=1}^n S_{e_i}S_{e_i}^*=P_V\}),\]
and the isomorphism
\[\mathcal{O}_n\ni S_i\mapsto S_{e_i}\in C^*(O_n)\]
follows by definition.
The Cartan subalgebra $C(\mathcal{G}_{O_n}^{(0)})=C(\partial O_n)$ is identified with \[\overline{\operatorname{span}} \{1, S_\mu S_\mu^*\; |\; \mu\in \bigcup_{k=1}^\infty\{1, \cdots, n\}^k\}, \;\; n<\infty.\]


\subsection{Boundary path spaces of graphs for $\mathcal{E}_n$ and $\mathcal{O}_\infty$}\label{not}
For $2 \leq n < \infty$, consider the following graph $E_n:=(E_n^0, E_n^1, r, s)$ such that $E_n^0=\{V_0, V\}$, $E^1_n=\{e_1,\cdots, e_n\}\sqcup\{f_1,\cdots, f_n\}$ with
\[r(e_i)=s(e_i)=V,\quad s(f_i)=V,\quad r(f_i)=V_0,\;\;\text{for}\;\; i\in\{1, \cdots, n\}.\]
\[   \xymatrix{\vdots&\ar@(u, ur)[]^{e_1} \ar@(ul,
u)[]^{e_2} \ar@(l, lu)[]^{e_3} \ar@(ld, l)[]^{e_n} {\cdot}_V \ar@/^0.6pc/[rr]^{f_2} \ar@/^2.0pc/[rr]^{f_1}
\ar@/_1.5pc/[rr]^{f_n} & \vdots& \cdot _{V_0}}
\]
Then, the boundary path space $\partial E_n$ (see \cite[Sec. 8.1.]{NO}) is defined by
\[\partial E_n:=\{e_{\mu_1}e_{\mu_2}\cdots \;|\; \text{infinite path}\}\cup \{e_{\mu_1}e_{\mu_2}\cdots e_{\mu_{k-1}}f_{\mu_k} \;|\; \text{finite path with the range}\; V_0\}\cup \{V_0\}.\]
In this paper, we identify $\partial E_n$ with the following vertex set of rooted $n$-regular tree with the root $v_0$
\[\{v_0\}\cup\bigcup_{k=1}^\infty\{1, \cdots, n\}^k\cup \{1, \cdots, n\}^\infty\]by the correspondence
\[V_0\mapsto v_0,\] \[\{\text{finite paths with the range}\; V_0\}\ni e_{\mu_1}e_{\mu_2}\cdots e_{\mu_{k-1}}f_{\mu_k}\mapsto \mu_1\cdots\mu_k\in \bigcup_{k=1}^\infty\{1,\cdots, n\}^k,\] \[E_n^\infty:=\{\text{infinite paths}\} \ni e_{\mu_1}e_{\mu_2}\cdots \mapsto \mu_1\mu_2\cdots\in \{1, \cdots, n\}^\infty.\]

For $n=\infty$,
we consider the graph
$E_\infty:=(E^0_\infty, E^1_\infty, r, s)$ such that
    \[E^0_\infty=(E^0_\infty)_{\text{sing}}=\{V_0\}, \quad E^1_\infty=\{e_i\}_{i=1}^\infty,\quad r(e_i)=s(e_i)=V_0.\]
    \[   \xymatrix{\vdots&\ar@(u, ur)[]^{e_1} \ar@(ul,
u)[]^{e_2} \ar@(l, lu)[]^{e_3} \ar@(ld, l)[] {\cdot}_{V_0} }
\]
    Then, the boundary path space is given by
    \[\partial E_\infty:=\{e_{\mu_1}e_{\mu_2}\cdots \;|\;\text{infinite paths}\}\cup \{\text{finite paths}\}\cup \{V_0\},\quad E^\infty_\infty:=\{\text{infinite paths}\}\]
and naturally identified with $\{v_0\}\cup\bigcup_{k=1}^\infty\mathbb{N}^k\cup\mathbb{N}^\infty$.

We write 
\[E^f_n:=\partial E_n\backslash E_n^\infty=\{v_0\}\cup\bigcup_{k=1}^\infty \{1, \cdots, n\}^k,\quad 2 \leq n \leq \infty.\]

Following \cite[Sec.8.1.]{NO},
we put a topology on $\partial E_n$.
For a finite path $\mu_1\mu_2\cdots\mu_k\in E^f_n, \mu_i\in \{1, \cdots, n\}$,
we write $|\mu|:=k$, $|v_0|:=0$.
We write
\[Z(\mu):=\{\mu x\in \partial E_n\;|\; x\in \partial E_n\}, \quad Z^\infty(\mu):=Z(\mu)\cap E^\infty_n,\quad Z(v_0):=\partial E_n.\]
The open basis are given by the following sets
\[Z(\mu),\quad Z(\mu)\backslash \cup_{i=1}^N Z(\mu \nu_i),\quad \mu, \nu_i\in E^f_n,\;\; N\in\mathbb{N}.\]
Then, $\partial E_n$ is totally disconnected, compact Hausdorff space with the above clopen basis.

For $2 \leq n<\infty$,
the open basis of $\partial E_n$ are given by the clopen sets
\[Z(\mu),\quad \{\mu\},\;\;\mu\in E^f_n.\]
\begin{remark}
    There is a slight notational difference between \cite{NO} and our paper in the case of $\partial E_n$ for $2 \leq n<\infty$.
    
    Our set $Z(\mu), \mu\in E^f_n$ corresponds to  $Z(e_{\mu_1}\cdots e_{\mu_k}),\;  e_{\mu_1}\cdots e_{\mu_k}\in E^*_n$ in \cite{NO}.
    In \cite{NO}, one also has $Z(e_{\mu_1}\cdots e_{\mu_{k-1}}f_{\mu_k})=\{e_{\mu_1}\cdots f_{\mu_k}\}=\{\mu\}$, and this is given by $\{\mu\}=Z(\mu)\backslash \sqcup_{k=1}^nZ(\mu k)$ in our setting.
    Thus, the topology of their boundary path space coincides with ours.
    The topology of $\partial E_n$ ($n=2,\cdots, \infty$) in this paper 
is also equal to the topology used in \cite[Sec. 5.2. pp179--181]{BO} to compactify the rooted $n$-regular tree ($2 \leq n \leq \infty$).
\end{remark}

\begin{remark}\label{r1}
For $n<\infty$,
    $E^f_n\subset \partial E_n$ is an open dense subset and the boundary $E^\infty_n$ is closed.
    On the other hand, $E^f_\infty\subset\partial E_\infty$ is dense but not open.
\end{remark}
The graph $E_n$ ($2 \leq n < \infty$) and $E_\infty$ define the following graph C*-algebras
\begin{eqnarray*}
C^*(E_n):=C^*_{\text{univ}} \left( \left\{ S_{e_i}, S_{f_i}, P_{V_0}, P_V 
\middle| 
\begin{gathered}
P_{V}^2 = P_{V} = P_{V}^*, P_{V_0}^2 = P_{V_0} = P_{V_0}^* \\
S^*_{e_i}S_{e_i}=P_V,\;\; S_{f_i}^*S_{f_i}=P_{V_0},\; \sum_i (S_{e_i}S_{e_i}^*+S_{f_i}S_{f_i}^*) =P_V
\end{gathered}
 \right\} \right), \\
C^*(E_\infty):=C^*_{\text{univ}}(\{S_{e_i}, P_{V_0} \mid P_{V_0}^2 = P_{V_0} = P_{V_0}^*, \; S_{e_i}^*S_{e_i}=P_{V_0}, \; \sum_{i=1}^NS_{e_i}S^*_{e_i}<P_{V_0}, \; \text{for any} \; N\in\mathbb{N}\}),
\end{eqnarray*}
and it is easy to check the isomorphism
\[\mathcal{O}_\infty\ni T_i\mapsto S_{e_i}\in C^*(E_\infty).\]
Since $\{S_{e_i}+S_{f_i}\}_{i=1}^n$ are isometries with mutually orthogonal ranges, there is a map
\[\mathcal{E}_n\ni T_i\mapsto S_{e_i}+S_{f_i}\in C^*(E_n).\]
This map is an isomorphism since it sends $e_n=1_{\mathcal{E}_n}-\sum_{i=1}^n T_iT_i^*$ to $P_{V_0}\not=0$ and $S_{f_i}=(S_{e_i}+S_{f_i})P_{V_0}$.




\subsection{Graph groupoids  for $\mathcal{E}_n$ and $\mathcal{O}_\infty$}
Let $\sigma_{E_n} \colon \partial E_n\backslash \{v_0\}\ni \mu_1\mu_2\mu_3\cdots\;\mapsto \mu_2\mu_3\cdots\;\in\partial E_n$ be the partially defined shift map which is a local homeomorphism.
Following \cite[Sec. 8.3.]{NO},
we introduce the graph groupoid $\mathcal{G}_{E_n}$ as the following Deaconu--Renault groupoid
\[\mathcal{G}_{E_n}:=\{(x, k-l, y)\in \partial E_n\times \mathbb{Z}\times\partial E_n\;|\; k, l\in\mathbb{Z}_{\geq 0},\;\; \sigma_{E_n}^k(x)=\sigma_{E_n}^l(y)\}\]
with the unit space \[\mathcal{G}_{E_n}^{(0)}:=\{(x, 0, x)\;|\; x\in\partial E_n\}=\partial E_n\]
and the structure maps
\[s(x, m, y):=y,\quad r(x, m, y):=x, \quad (x, m, y)\cdot (y, n, z):=(x, m+n, z).\]
The topology of $\mathcal{G}_{E_n}$ is given by the following open basis
\[Z(U, k, l, V):=\{(x, k-l, y)\;|\; x\in U, \;y\in V,\; \sigma_{E_n}^k(x)=\sigma_{E_n}^l(y)\}, \quad U, V\subset \partial E_n \colon \text{clopen},\]
and $\mathcal{G}_{E_n}$ becomes a locally compact, second countable, ample groupoid.

By \cite[Prop. 8.2.]{NO},
$\mathcal{G}_{E_n}$ is topologically principal (i.e., the subset of points with trivial isotropy in $\mathcal{G}_{E_n}^{(0)}$  is dense) for $n=2, \cdots, \infty$.
By Remark \ref{r1}, the minimality holds only for $\mathcal{G}_{E_\infty}$.

By \cite[Prop.2.2]{BCW},
one has the following isomorphisms
\[C_r^*(\mathcal{G}_{E_n})\ni 1_{Z(Z(i), 1, 0, Z(v_0))}\mapsto S_{e_i}+S_{f_i}=T_i\in C^*(E_n)=\mathcal{E}_n, \quad (2 \leq n<\infty),\]
\[C_r^*(\mathcal{G}_{E_\infty})\ni 1_{Z(Z(i), 1, 0, Z(v_0))}\mapsto S_{e_i}=T_i\in C^*(E_\infty)=\mathcal{O}_\infty.\]
The Cartan subalgebra $C(\partial E_n)$  is identified with \[\overline{\operatorname{span}}\{1, T_\mu T_\mu^*\;|\; \mu\in \bigcup_{k=1}^\infty\{1, \cdots, n\}^k\}, \;\; 2 \leq n\leq\infty.\]

By the previous arguments, the  groupoid C*-algebras $C^*_r(\mathcal{G}_{O_n}), C^*_r(\mathcal{G}_{E_n}), C^*_r(\mathcal{G}_{E_\infty})$ are canonically isomorphic to $\mathcal{O}_n$, $\mathcal{E}_n$ and $\mathcal{O}_\infty$, respectively.
Thus, K-theory computes the groupoid homologies of the above groupoids by HK conjecture.
\begin{thm}[{\cite[Thm. 4.6.]{NO2}}]
For $n=2, \cdots, \infty$,
we have
   \[H_0(\mathcal{G}_{E_n})=\mathbb{Z},\quad H_k(\mathcal{G}_{E_n})=0, \;\; k\geq 1.\]
For $n<\infty$,
we have
   \[H_0(\mathcal{G}_{O_n})=\mathbb{Z}/(n-1)\mathbb{Z},\quad H_k(\mathcal{G}_{O_n})=0, \;\; k\geq 1.\]
\end{thm}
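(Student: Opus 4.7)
The plan is to deduce the homology of $\mathcal{G}_{O_n}$ and $\mathcal{G}_{E_n}$ from a Pimsner--Voiculescu-type long exact sequence for Deaconu--Renault groupoids, or equivalently from the HK conjecture (which is known to hold for graph groupoids) together with the classical K-theory computations for $\mathcal{O}_n$, $\mathcal{E}_n$, and $\mathcal{O}_\infty$.

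First, I would invoke the long exact sequence associated to the (possibly partial) local homeomorphism $\sigma$ defining $\mathcal{G}$:
\[\cdots \to H_k(D, \mathbb{Z}) \xrightarrow{\iota_\ast - \sigma_\ast} H_k(X, \mathbb{Z}) \to H_k(\mathcal{G}) \to H_{k-1}(D, \mathbb{Z}) \to \cdots\]
where $X$ is the base space ($\partial O_n$ or $\partial E_n$) and $D \subset X$ is the open domain of $\sigma$ ($D = X$ for $\mathcal{G}_{O_n}$, and $D = \partial E_n \setminus \{v_0\}$ for $\mathcal{G}_{E_n}$). Since $X$ and $D$ are totally disconnected, their higher homology groups all vanish and $H_0 = C_c(\cdot, \mathbb{Z})$, so the sequence immediately gives $H_k(\mathcal{G}) = 0$ for all $k \geq 2$ and collapses to
\[0 \to H_1(\mathcal{G}) \to C_c(D, \mathbb{Z}) \xrightarrow{\iota_\ast - \sigma_\ast} C(X, \mathbb{Z}) \to H_0(\mathcal{G}) \to 0.\]

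Second, I would compute the kernel and cokernel directly. For $\mathcal{G}_{O_n}$, one has $\sigma_{O_n\,\ast}(1_{Z^\infty(\mu)}) = 1_{Z^\infty(\sigma\mu)}$ for $|\mu|\geq 1$ and $\sigma_{O_n\,\ast}(1_{\partial O_n}) = n \cdot 1_{\partial O_n}$; modulo the image of $\iota_\ast - \sigma_\ast$ all cylinder indicators collapse to $1_{\partial O_n}$, and the relation $(n-1)\cdot 1_{\partial O_n} \equiv 0$ emerges, producing $H_0(\mathcal{G}_{O_n}) = \mathbb{Z}/(n-1)\mathbb{Z}$. A level-by-level analysis of the $\sigma$-equivariance equation $f = \sigma_\ast f$ then shows the kernel is trivial, so $H_1 = 0$. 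For $\mathcal{G}_{E_n}$ essentially the same bookkeeping on $\partial E_n = \{v_0\} \sqcup \bigsqcup_j Z(j)$ identifies $1_{Z(\mu)}$ with $1_{\partial E_n}$ and $1_{\{\mu\}}$ with $1_{\{v_0\}}$ in the cokernel; combining with the refinement identity $1_{Z(j)} = 1_{\{j\}} + \sum_l 1_{Z(jl)}$ forces $1_{\{v_0\}} \equiv (1-n)\cdot 1_{\partial E_n}$, so the cokernel is free of rank one on $1_{\partial E_n}$ and $H_0(\mathcal{G}_{E_n}) = \mathbb{Z}$, while the kernel again vanishes by the same analysis.

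The main delicate point is the partial case $\mathcal{G}_{E_n}$: one must carefully track the distinguished point $v_0$ at which $\sigma_{E_n}$ is undefined (clopen for $n<\infty$, merely a limit point for $n=\infty$), and for $n=\infty$ verify that the functions entering the computation really lie in $C_c(D,\mathbb{Z})$ rather than only $C(D,\mathbb{Z})$. Alternatively one can bypass the direct calculation by appealing to the HK conjecture for graph groupoids (cf.\ \cite{matui} and subsequent developments), together with the standard K-theory computations $K_0(\mathcal{O}_n) = \mathbb{Z}/(n-1)\mathbb{Z}$, $K_0(\mathcal{E}_n) = K_0(\mathcal{O}_\infty) = \mathbb{Z}$ and $K_1 = 0$ for all three algebras.
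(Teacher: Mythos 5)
The paper does not prove this theorem at all: it is imported verbatim from Nyland--Ortega \cite[Thm. 4.6]{NO2}, with the surrounding text merely remarking that the HK conjecture lets K-theory compute these homologies. Your primary route --- the long exact sequence in homology for (possibly partially defined) Deaconu--Renault groupoids, reducing everything to the kernel and cokernel of $\iota_*-\sigma_*$ on $C_c(\cdot,\mathbb{Z})$ of totally disconnected spaces --- is essentially the proof given in the cited source, and your bookkeeping (cylinders collapsing to $1_{\partial O_n}$ with the relation $(n-1)\cdot 1\equiv 0$; the identity $1_{\{v_0\}}\equiv(1-n)\cdot 1_{\partial E_n}$, which is exactly Example \ref{1-n} of the paper) is consistent with what the paper later uses. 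One caveat on your proposed shortcut: the HK conjecture only identifies $K_0$ with $\bigoplus_i H_{2i}$ and $K_1$ with $\bigoplus_i H_{2i+1}$, so the K-theory computations alone pin down the direct sums, not the individual groups $H_k$; you still need the vanishing $H_{k\ge 2}=0$ coming from the exact sequence (or from the one-dimensional nature of these Deaconu--Renault groupoids) before the K-theory data determines $H_0$ and $H_1$. Since you do establish that vanishing first, your combined argument is sound; just be aware the "bypass" as stated in isolation is incomplete, and that the kernel computation for $H_1$ is asserted rather than carried out (it reduces to $\ker(1-n)=0$, resp. the injectivity of the corresponding map for $E_n$, so it is fine).
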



\subsection{Homology group $H_0(\mathcal{G})$}
We will use X. Li's result \cite{XinLi}, and we recall the 0th homology groups. 
The reader may refer to \cite{XinLi}, \cite[Sec. 3.1.]{matui} for more details.
For a locally compact, ample groupoid $\mathcal{G}$,
the set of $\mathbb{Z}$-valued continuous, compactly supported function $C_c(\mathcal{G}, \mathbb{Z})$ consists of the elements
\[\sum_{i=1}^M a_i 1_{U_i},\;\; a_i\in\mathbb{Z},\;\;\; U_i\subset \mathcal{G}\;: \;\text{mutually disjoint clopen bisection}.\]
There is a well-defined map
\[\partial_1 \colon C_c(\mathcal{G}, \mathbb{Z})\ni 1_U\mapsto 1_{s(U)}- 1_{r(U)}\in C_c(\mathcal{G}^{(0)}, \mathbb{Z}),\]
and 0th homology group is defined by
\[H_0(\mathcal{G}):=C_c(\mathcal{G}^{(0)}, \mathbb{Z})/\operatorname{Im}\partial_1.\]
\begin{ex}
    The map
    \[H_0(R_F)=C_c(F, \mathbb{Z})/\operatorname{Im}\partial_1\ni f+\operatorname{Im}\partial_1\mapsto \sum_{x\in F}f(x)\in\mathbb{Z}\]
    is an isomorphism.
\end{ex}
Note that one also has $H_{n\geq 1}(R_F)=0$ by definition.
\begin{ex}
    A generator of $H_0(\mathcal{G}_{O_n})=\mathbb{Z}/(n-1)\mathbb{Z}$ is given by $1_{Z^\infty(1)}+\operatorname{Im}\partial_1$, and one has
    \[1_{Z^\infty(i)}+\operatorname{Im}\partial_1=1_{Z^\infty(1)}+\partial_1(1_{Z(Z^\infty(1), 1, 1, Z^\infty(i))})=1_{Z^\infty(1)}+\operatorname{Im}\partial_1,\]
    \[1_{Z^\infty(1)}+\operatorname{Im}\partial_1=1_{\partial O_n}+\partial_1(1_{Z(Z^\infty(v_0), 0, 1, Z^\infty(1))})=1_{\partial O_n}+\operatorname{Im}\partial_1,\]
    \[(1-n)(1_{Z^\infty(1)}+\operatorname{Im}\partial_1)=(1_{\partial O_n}-\sum_{i=1}^n 1_{Z^\infty(i)})+\operatorname{Im}\partial_1=0\in H_0(\mathcal{G}_{O_n}).\]
\end{ex}
\begin{ex}\label{1-n}
    A generator of $1\in H_0(\mathcal{G}_{E_n})=\mathbb{Z}$ is given by $1_{\partial E_n}+\operatorname{Im}\partial_1=1_{Z(1)}+\operatorname{Im}\partial_1$.
    For $2 \leq n < \infty$,
    one has \[1_{\{v_0\}}+\operatorname{Im}\partial_1=(1-n)\in H_0(\mathcal{G}_{E_n})=\mathbb{Z}\] by the computation
    \begin{align*}
        1_{\{v_0\}}=&1_{\partial E_n}-\sum_{i=1}^n 1_{Z(i)}\\
        =&1_{Z(1)}+\partial_1(1_{Z(Z(1), 1, 0, Z(v_0))})-n1_{Z(1)}-\sum_i\partial_1(1_{Z(Z(1), 1, 1, Z(i))})\\
        \in&(1-n)1_{Z(1)}+\operatorname{Im}\partial_1.
    \end{align*}
\end{ex}


\subsection{Higman--Thompson's groups $[[\mathcal{G}_{O_n}]]$}\label{htg}
We recall V. Nekrashevych's picture of Higman--Thompson groups $V_n=[[\mathcal{G}_{O_n}]]$ (see \cite{Nek}). 
Since $\mathcal{G}_{O_n}$ is topologically principal, locally compact, ample groupoid,
we identify $[[\mathcal{G}_{O_n}]]$ with the subgroup of $\operatorname{Homeo}(\partial O_n)$. 
A full bisection $U$ of $\mathcal{G}_{O_n}$ is given by a pair of partitions of clopen sets
\[U:=\bigsqcup_{i=1}^MZ(Z^\infty(\mu_i), |\mu_i|, |\nu_i|, Z^\infty(\nu_i)),\]
where $\mu _i$ and $\nu _i$ satisfy
\[\bigsqcup_{i=1}^MZ^\infty(\mu_i)=\partial O_n =\bigsqcup_{i=1}^MZ^\infty(\nu_i),\]
and we write \[U={_{\mu_i}g_{\nu_i}} \colon \partial O_n\ni \nu_i x\mapsto \mu_i x\in \partial O_n\]
for short.
An element of the Higman--Thompson group $V_n$ is naturally identified with the pair of partitions (cf \cite{Hig}),
and one can identify $V_n$ with $[[\mathcal{G}_{O_n}]]$.

V. Nekrashevych gives the following picture to understand $V_n$ as a subgroup of $U(\mathcal{O}_n)$.
\begin{prop}[{\cite[Prop. 9.5.]{Nek}, Lem. \ref{matui}}]
    The map 
    \[V_n\ni {_{\mu_i}g_{\nu_i}}\mapsto \sum_{i=1}^MS_{\mu_i}S_{\nu_i}^*\in U(\mathcal{O}_n)\]
    is a well-defined injective group homomorphism by which we identify $V_n$ with the subgroup
    \begin{align*}&\{\sum_{i=1}^M S_{\mu_i}S_{\nu_i}^*\in U(\mathcal{O}_n)\;|\; \sum_{i=1}^MS_{\mu_i}S_{\mu_i}^*
    =\sum_{i=1}^M S_{\nu_i}S_{\nu_i}^*=1_{\mathcal{O}_n},\;\; M\in \mathbb{N}\}\\=&\{1_U\in N(C^*_r(\mathcal{G}_{O_n}), C(\partial O_n))\; |\; U\in [[\mathcal{G}_{O_n}]]\}.\end{align*}
\end{prop}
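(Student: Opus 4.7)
The plan is to assemble the statement from three ingredients already on the table: the isomorphism $\mathcal{O}_n\cong C^*_r(\mathcal{G}_{O_n})$ that sends $S_\mu S_\nu^*$ to $1_{Z(Z^\infty(\mu),|\mu|,|\nu|,Z^\infty(\nu))}$, the set-theoretic identification $V_n\cong[[\mathcal{G}_{O_n}]]$ via pairs of partitions, and Lemma \ref{matui}(3), which provides the multiplicative splitting $[[\mathcal{G}]]\ni U\mapsto 1_U\in N(C^*_r(\mathcal{G}),C(\mathcal{G}^{(0)}))$.

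First I would translate the formula into groupoid language. Given $g={_{\mu_i}g_{\nu_i}}\in V_n$, Section~\ref{htg} identifies it with the full bisection $U=\bigsqcup_{i=1}^M Z(Z^\infty(\mu_i),|\mu_i|,|\nu_i|,Z^\infty(\nu_i))\in[[\mathcal{G}_{O_n}]]$. Because the bisection decomposition is disjoint,
\[1_U=\sum_{i=1}^M 1_{Z(Z^\infty(\mu_i),|\mu_i|,|\nu_i|,Z^\infty(\nu_i))},\]
and applying the isomorphism $C^*_r(\mathcal{G}_{O_n})\cong\mathcal{O}_n$ rewrites the right-hand side as $\sum_{i=1}^M S_{\mu_i}S_{\nu_i}^*$. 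This shows that the formula in the statement equals $1_U$ under the identification, so it is a function of $g$ alone. Well-definedness at the combinatorial level (independence of the representative partition) can also be verified directly: any two admissible partition pairs share a common refinement, and one step of refinement replaces a term $S_\mu S_\nu^*$ by $\sum_{k=1}^n S_{\mu k} S_{\nu k}^*$, which equals $S_\mu(\sum_k S_kS_k^*)S_\nu^*=S_\mu S_\nu^*$ by the Cuntz relation.

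Next I would obtain the group homomorphism and injectivity for free from Lemma \ref{matui}(3): the assignment $U\mapsto 1_U$ is a splitting of $\sigma$, hence in particular a group homomorphism $[[\mathcal{G}_{O_n}]]\to N(C^*_r(\mathcal{G}_{O_n}),C(\partial O_n))\subset U(\mathcal{O}_n)$, and it is injective because $\sigma(1_U)=U$. Through the identification of the previous paragraph, this is exactly the map $g\mapsto \sum_i S_{\mu_i}S_{\nu_i}^*$.

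Finally I would identify the image with the two sets on the right. For the first description, $\sum_i S_{\mu_i}S_{\nu_i}^*\in\mathcal{O}_n$ is unitary precisely when $\sum_i S_{\mu_i}S_{\mu_i}^*=1=\sum_i S_{\nu_i}S_{\nu_i}^*$, and these two equalities translate, via the correspondence $S_\mu S_\mu^*\leftrightarrow 1_{Z^\infty(\mu)}$, to the statement that $\{Z^\infty(\mu_i)\}$ and $\{Z^\infty(\nu_i)\}$ are disjoint covers of $\partial O_n$ --- equivalently, that $U$ is a full bisection with compact support, i.e.\ an element of $[[\mathcal{G}_{O_n}]]$. For the second description, the containment $\{1_U\mid U\in[[\mathcal{G}_{O_n}]]\}$ is immediate from the splitting above, and the reverse inclusion is what the first characterisation provides. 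No step here is genuinely hard; the only care needed is to keep track of how the partition data translates between the three pictures (finite Cuntz words, clopen cylinder bisections, and sums of partial isometries), and to invoke the Cuntz relation at each refinement step to see that the sum $\sum_i S_{\mu_i}S_{\nu_i}^*$ is a function of the bisection alone.
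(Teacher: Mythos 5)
Your argument is correct and follows exactly the route the paper intends: the proposition is stated with no separate proof, being justified by the citation to Lemma \ref{matui} (the splitting $U\mapsto 1_U$) together with the identification $\mathcal{O}_n\cong C^*_r(\mathcal{G}_{O_n})$ and $V_n\cong[[\mathcal{G}_{O_n}]]$, which is precisely what you assemble. The extra verification of well-definedness via common refinements and the Cuntz relation is a harmless (and correct) redundancy.
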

Thus, we also write
\[{_{\mu_i}g_{\nu_i}}=\sum_i S_{\mu_i}S_{\nu_i}^*\]
for short.
One has \[1_U=_{\mu_i}g_{\nu_i}\in C^*_r(\mathcal{G}_{O_n})=\mathcal{O}_n, \quad 1_{Z^\infty(\mu)}=S_\mu S_\mu^*\in C(\partial O_n)\subset\mathcal{O}_n,\]
and
\[{_{\mu_i}g_{\nu_i}} F (_{\mu_i}g_{\nu_i})^*(x)=F({_{\mu_i}g_{\nu_i}}^{-1} (x)), \quad x\in \partial O_n, \;\; F\in C(\partial O_n), \;{_{\mu_i}g_{\nu_i}}\in U(\mathcal{O}_n).\]


\subsection{Topological full groups $[[\mathcal{G}_{E_n}]]$}
An analogue of V. Nekrashevych's picture for $[[\mathcal{G}_{E_n}]]$ is given below as a consequence of Lemma \ref{matui}.
Recall that $e_n :=1_{\mathcal{E}_n}-\sum_{i=1}^nT_iT_i^*$.
\begin{lem}\label{tabl}
    For $2 \leq n<\infty$, the following set
    \begin{equation*}
\Gamma_n:=\left\{ \sum _{i = 1}^{M} T_{\mu _i} T_{\nu _i}^* + \sum _{k=1}^N T_{v_k} e_n T_{w_k}^*\in \mathcal{E}_n \middle|
\begin{gathered}
\mu _i, \nu _i, v_k, w_k \in E^f_n = \{v_0\}\cup \bigcup_{k=1}^\infty \{1, \cdots, n\}^k ,\\
\bigsqcup_{i=1}^MZ(\mu_i)\sqcup \bigsqcup_{k=1}^N\{v_k\}=\partial E_n=\bigsqcup_{i=1}^MZ(\nu_i)\sqcup\bigsqcup_{k=1}^N\{w_k\}
\end{gathered}
\right\}.
\end{equation*}
is identified with the image of splitting group homomorphism
\[[[\mathcal{G}_{E_n}]]\ni U\mapsto 1_U\in N(C^*_r(\mathcal{G}_{E_n}), C(\partial E_n)).\]
In particular,
we have $\Gamma_n=[[\mathcal{G}_{E_n}]]$.
\end{lem}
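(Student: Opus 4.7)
The plan is to invoke Lemma \ref{matui}(3), which realizes $[[\mathcal{G}_{E_n}]]$ as the image of $U \mapsto 1_U$ inside $N(C^*_r(\mathcal{G}_{E_n}), C(\partial E_n)) \subset \mathcal{E}_n$, and then show this image coincides with $\Gamma_n$.

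The first step is to establish the dictionary $1_{Z(Z(\mu), |\mu|, |\nu|, Z(\nu))} = T_\mu T_\nu^*$ and $1_{\{(v, |v|-|w|, w)\}} = T_v e_n T_w^*$ under the canonical isomorphism $C^*_r(\mathcal{G}_{E_n}) \cong \mathcal{E}_n$. The first identity follows by convolving the generating bisections $1_{Z(Z(i), 1, 0, Z(v_0))} = T_i$. For the second, I would observe that $e_n = 1 - \sum_i T_i T_i^*$ equals $1_{\partial E_n} - \sum_i 1_{Z(i)} = 1_{\{v_0\}}$ as an element of $C(\partial E_n)$, and then the convolution $T_v \cdot e_n \cdot T_w^* = 1_{Z(Z(v), |v|, 0, Z(v_0))} \cdot 1_{\{v_0\}} \cdot 1_{Z(Z(v_0), 0, |w|, Z(w))}$ collapses to $1_{\{(v, |v|-|w|, w)\}}$, since the middle factor forces the range of the first bisection and the source of the last to meet at $v_0$.

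The inclusion $\Gamma_n \subseteq \{1_U : U \in [[\mathcal{G}_{E_n}]]\}$ then follows immediately: for data satisfying the partition conditions,
\[
U := \bigsqcup_{i=1}^{M} Z(Z(\mu_i), |\mu_i|, |\nu_i|, Z(\nu_i)) \sqcup \bigsqcup_{k=1}^{N} \{(v_k, |v_k|-|w_k|, w_k)\}
\]
is a clopen full bisection with $1_U$ equal to the given element.

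For the reverse inclusion, let $U \in [[\mathcal{G}_{E_n}]]$, with induced homeomorphism $\varphi$ of $\partial E_n$ and cocycle $c \colon \partial E_n \to \mathbb{Z}$. The crucial step is to show that for each $y \in E^\infty_n$, some prefix $\nu$ of $y$ satisfies $\varphi(\nu z') = \mu z'$ for \emph{all} $z' \in \partial E_n$, where $\mu$ is the prefix of $\varphi(y)$ of length $|\nu| + c(y)$. To see this, first pick $\nu$ long enough that $c \equiv c(y)$ on $Z(\nu)$ (local constancy of $c$), that $\varphi(Z(\nu)) \subseteq Z(\mu)$ (continuity of $\varphi$ combined with $\varphi(y) \in Z(\mu)$ and clopenness of $Z(\mu)$), and that $|\nu| + c(y) \geq 0$. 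The cocycle relation $\sigma^{|\nu|+c(y)}(\varphi(z)) = \sigma^{|\nu|}(z)$ together with $\varphi(z) \in Z(\mu)$ then forces $\varphi(\nu z') = \mu z'$. By compactness of $E^\infty_n$, finitely many such cylinders cover $E^\infty_n$; the complement in $\partial E_n$ is a closed subset of the discrete open set $E^f_n$, hence a finite set $\{w_1, \ldots, w_N\} \subset E^f_n$. Disjointifying the cylinders via the prefix order (using that $Z(\alpha) \setminus Z(\beta)$ decomposes into finitely many smaller cylinders and the singleton $\{\alpha\}$ when $\alpha$ is a proper prefix of $\beta$) produces the required partition, and hence the standard form for $U$.

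The main obstacle is the cylindricity extension argument in the last step, namely that the cylindrical description of $\varphi$ on $Z^\infty(\nu)$ automatically propagates to all of $Z(\nu)$ for sufficiently long $\nu$. This hinges on combining continuity of $\varphi$ with local constancy of the $\mathbb{Z}$-valued cocycle, so that both the cocycle value and the image cylinder stabilize on a sufficiently long-prefix cylinder.
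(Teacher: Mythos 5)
Your proposal is correct and follows essentially the same route as the paper, which simply exhibits the sets $\bigsqcup_{i}Z(Z(\mu_i),|\mu_i|,|\nu_i|,Z(\nu_i))\sqcup\bigsqcup_{k}Z(\{v_k\},|v_k|,|w_k|,\{w_k\})$ as full bisections with $1_U=\sum_i T_{\mu_i}T_{\nu_i}^*+\sum_k T_{v_k}e_nT_{w_k}^*$ and then asserts, by compactness of $\partial E_n$ and $E_n^\infty$, that every full bisection has this form; your covering argument is a legitimate expansion of that one-line compactness claim, and your dictionary computation for $T_ve_nT_w^*$ is the right one. The only point to tighten is the assertion $\sigma^{|\nu|+c(y)}(\varphi(z))=\sigma^{|\nu|}(z)$ for all $z\in Z(\nu)$: continuity of $\varphi$ together with local constancy of $c$ does not by itself guarantee that $(|\nu|+c(y),|\nu|)$ is a valid witness pair for every element $(\varphi(z),c(y),z)$ (the tails could a priori match only at a deeper level for some $z$), so you should instead pick a basic compact open set $Z(V_1,k,l,V_2)\subseteq U$ containing $(\varphi(y),c(y),y)$ and choose the prefix $\nu$ of $y$ with $|\nu|\geq l$ and $Z(\nu)\subseteq s(Z(V_1,k,l,V_2))$; since $U$ is a bisection, every $g_z$ with $z\in Z(\nu)$ then lies in $Z(V_1,k,l,V_2)$ and the identity holds by definition of these basic sets.
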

\begin{proof}
    Recall that the open basis of $\partial E_n$ ($n<\infty$) are given by the sets
    \[Z(\mu),\;\; \{\mu\}, \;\;\mu\in E^f_n.\]
    Thus, the following set is a full bisection
    \[U:=\bigsqcup_{i=1}^MZ(Z(\mu_i), |\mu_i|, |\nu_i|, Z(\nu_i))\sqcup \bigsqcup_{k=1}^NZ(\{v_k\}, |v_k|, |w_k|, \{w_k\})\in [[\mathcal{G}_{E_n}]]\]
    and one has 
    \[1_U=\sum _{i = 1}^{M} T_{\mu _i} T_{\nu _i}^* + \sum _{k=1}^N T_{v_k} e_n T_{w_k}^*\in C^*_r(\mathcal{G}_{E_n})=\mathcal{E}_n.\]
    By the compactness of $\partial E_n$ and $E^\infty_n$,
    every full bisection is of the above form.
    Thus, the image of splitting $[[\mathcal{G}_{E_n}]]\ni U\mapsto 1_U\in N(\mathcal{E}_n, C(\partial E_n))$ is equal to $\Gamma_n$.
\end{proof}
Since $e_n=e_{n+1}+T_{n+1}T_{n+1}^*\in \mathcal{E}_n\subset\mathcal{E}_{n+1}$, we have the canonical inclusion $\Gamma_n<\Gamma_{n+1}$ in the algebras $\mathcal{E}_n\subset \mathcal{E}_{n+1}\subset \mathcal{O}_\infty$.
We define $\Gamma_\infty:=\bigcup_{n=2}^\infty \Gamma_n\subset \mathcal{O}_\infty$.
For the topologically principal, second countable, locally compact,  ample groupoid $\mathcal{G}_{E_\infty}$,
    Lemma \ref{matui} gives the exact sequence
    \[1\to U(C(\partial E_\infty))\to N(\mathcal{O}_\infty, C(\partial E_\infty))\to [[\mathcal{G}_{E_\infty}]]\to 1.\]
\begin{lem}
    The group $\Gamma_\infty$ is canonically identified with $[[\mathcal{G}_{E_\infty}]]$ via the splitting map \[[[\mathcal{G}_{E_\infty}]]\ni U\mapsto 1_U\in N(\mathcal{O}_\infty, C(\partial E_\infty))\]
    in Lem. \ref{matui}.
\end{lem}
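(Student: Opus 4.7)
The plan is to adapt the argument from Lemma \ref{tabl} to the infinite-alphabet setting, the main new difficulty being that $E^f_\infty \subset \partial E_\infty$ is dense but not open, so singletons $\{\mu\}$ are no longer clopen.

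First I would verify the inclusion $\Gamma_\infty \subseteq \{1_U \mid U \in [[\mathcal{G}_{E_\infty}]]\}$. Under the identification $C^*_r(\mathcal{G}_{E_\infty}) \cong \mathcal{O}_\infty$, the projection $e_n = 1 - \sum_{j=1}^n T_j T_j^*$ viewed inside $\mathcal{O}_\infty$ corresponds to the characteristic function of the clopen set $\partial E_\infty \setminus \bigcup_{j=1}^n Z(j) = Z(v_0) \setminus \bigcup_{j=1}^n Z(j)$, and consequently $T_v e_n T_w^*$ (with $v,w \in E^f_n$) corresponds to the basic bisection $Z(Z(v) \setminus \bigcup_{j=1}^n Z(vj),\, |v|,\, |w|,\, Z(w) \setminus \bigcup_{j=1}^n Z(wj))$. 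A direct check shows that the partition condition in the definition of $\Gamma_n$ is equivalent to the statement that the union of sources (resp.\ ranges) of the associated basic bisections equals $\partial E_\infty$, so that $\sum T_{\mu_i}T_{\nu_i}^* + \sum T_{v_k} e_n T_{w_k}^*$ is the characteristic function of a full bisection in $[[\mathcal{G}_{E_\infty}]]$.

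For the reverse inclusion, take $U \in [[\mathcal{G}_{E_\infty}]]$. By compactness of $\partial E_\infty$ I decompose $U$ into finitely many basic open bisections $Z(A_i, k_i, l_i, B_i)$. Every clopen subset of $\partial E_\infty$ is a finite disjoint union of pieces of the form $Z(\mu)$ or $Z(\mu) \setminus \bigcup_{j=1}^M Z(\mu j)$ (using that $Z(\mu) \setminus \bigcup_{i} Z(\mu \nu_i) = (Z(\mu) \setminus \bigcup_{j=1}^{M}Z(\mu j)) \sqcup \bigsqcup_{j \leq M,\, j \notin F} Z(\mu j)$ for suitable $M$ and $F$, and iteratively continuing inside each $Z(\mu j)$). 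Since the bisection condition forces $\sigma^{k_i}(A_i) = \sigma^{l_i}(B_i)$, the refined source and range pieces must be of matching shape—cylinder with cylinder, or tail with tail using the \emph{same} truncation parameter. This yields a presentation
\[U = \bigsqcup_i Z(Z(\mu_i), |\mu_i|, |\nu_i|, Z(\nu_i)) \;\sqcup\; \bigsqcup_k Z\Bigl(Z(v_k) \setminus \bigcup_{j=1}^{n_k} Z(v_k j),\, |v_k|,\, |w_k|,\, Z(w_k) \setminus \bigcup_{j=1}^{n_k} Z(w_k j)\Bigr).\]

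Finally I uniformize the parameters: choose $N$ large enough that $N \geq n_k$ for every $k$ and that every letter appearing in any of $\mu_i, \nu_i, v_k, w_k$ is at most $N$. Using the identity $Z(\mu) \setminus \bigcup_{j=1}^{n_k} Z(\mu j) = (Z(\mu) \setminus \bigcup_{j=1}^N Z(\mu j)) \sqcup \bigsqcup_{j=n_k+1}^N Z(\mu j)$, applied compatibly on matching source and range pieces (so the cylinder-to-cylinder pieces split as $Z(Z(v_k j), |v_k j|, |w_k j|, Z(w_k j))$), the decomposition is rewritten so that all tail pieces share the truncation parameter $N$ and all indices lie in $E^f_N$. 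The characteristic function then reads $1_U = \sum T_{\mu_i} T_{\nu_i}^* + \sum T_{v_k} e_N T_{w_k}^*$, and checking the partition of $\partial E_N$ required by Lemma \ref{tabl} (it is the trace on $\partial E_N \subset \partial E_\infty$ of the partition of $\partial E_\infty$ produced above) shows $1_U \in \Gamma_N \subset \Gamma_\infty$. The main obstacle is the combinatorial bookkeeping in the compatible refinement: making sure that whenever a tail piece on the source side is split into a smaller tail and some cylinders, the matching splitting on the range side lines up with the bisection homeomorphism, so that no spurious mismatched pieces appear.
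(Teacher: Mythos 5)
Your proposal is correct and its overall architecture matches the paper's: both directions reduce to the observation that $T_v e_N T_w^*$ is the characteristic function of the bisection over $Z(v)\setminus\bigcup_{j=1}^N Z(vj)\to Z(w)\setminus\bigcup_{j=1}^N Z(wj)$, and the reverse inclusion is finished by enlarging all truncation parameters to a common $N$ via $e_{n_k}=e_N+\sum_{j=n_k+1}^N T_jT_j^*$ so that the element visibly lands in $\Gamma_N$. The one genuine difference is in how you obtain the normal form of an arbitrary $U\in[[\mathcal{G}_{E_\infty}]]$ as a finite disjoint union of cylinder-to-cylinder and tail-to-tail pieces: the paper simply quotes the presentation from \cite[Prop.\ 9.4]{NO}, whereas you re-derive it by compactness, a recursive decomposition of clopen sets of $\partial E_\infty$ into cylinders and tails, and the shape-matching argument (each refined piece acts by $\nu x\mapsto\mu x$, which sends finite paths to finite paths and hence tails with truncation $M$ onto tails with the same truncation $M$). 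Your route is more self-contained but carries the combinatorial bookkeeping you flag; the citation route is shorter. The remaining steps you sketch -- identifying the $\Gamma_n$ partition condition with fullness of the bisection, and recognizing the $\partial E_N$-partition as the trace of the $\partial E_\infty$-partition -- are correct as stated, so I see no gap, only details to be written out.
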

\begin{proof}

%
For $\sum_{i=1}^MT_{\mu_i}T_{\nu_i}^*+\sum_{k=1}^NT_{v_k}e_n T_{w_k}^*\in \Gamma_n$,
one has
\begin{align*}
    \sum_{i=1}^MT_{\mu_i}T_{\nu_i}^*+\sum_{k=1}^NT_{v_k}e_n T_{w_k}^*=&\sum_{i=1}^MT_{\mu_i}T_{\nu_i}^*+\sum_{k=1}^N(T_{v_k}T_{w_k}^*-\sum_{j\in \{1,\cdots, n\}}T_{v_kj} T_{w_kj}^*)\\
    =&1_{\bigsqcup_{i=1}^MZ(Z(\mu_i), |\mu_i|, |\nu_i|, Z(\nu_i))\sqcup \bigsqcup_{k=1}^NZ(Z(v_k)\backslash(\sqcup_{j=1}^nZ(v_k j)), |v_k|, |w_k|, Z(w_k)\backslash(\sqcup_{j=1}^nZ(w_k j)))}\\
    =:&1_V.
\end{align*}
Since $\sum_{i=1}^MT_{\mu_i}T_{\nu_i}^*+\sum_{k=1}^NT_{v_k}e_n T_{w_k}^*=1_V\in U(\mathcal{E}_n)\subset U(\mathcal{O}_\infty)$ is a unitary (i.e., $1_{s(V)}=(1_V)^*1_{V}=1_{\partial E_\infty}=1_V(1_V)^*=1_{r(V)}$),
one has
\[\bigsqcup_{i=1}^M Z(\mu_i)\sqcup\bigsqcup_{k=1}^N(Z(v_k)\backslash (\sqcup_{j=1}^nZ(v_k j)))=\partial E_\infty=\bigsqcup_{i=1}^M Z(\nu_i)\sqcup\bigsqcup_{k=1}^N(Z(w_k)\backslash (\sqcup_{j=1}^nZ(w_k j)))\]
which implies $V\in [[\mathcal{G}_{E_\infty}]]$ and that every $\Gamma_n$ is contained in the image of the splitting map.

By \cite[Prop. 9.4.]{NO}, an arbitrary element $U\in [[\mathcal{G}_{E_\infty}]]$ is represented by
    \[U:=\bigsqcup_{i\in I}Z(Z(\mu_i)\backslash(\sqcup_{k\in F_i}Z(\mu_i k)), |\mu_i|, |\nu_i|, Z(\nu_i)\backslash(\sqcup_{\nu_i k\in F_i}Z(k))),\]
    with $\mu_i, \nu_i\in E^f_\infty$ and a finite subset $F_i\subset \mathbb{N}$ satisfying
    \[\bigsqcup_{i\in I}(Z(\mu_i)\backslash(\sqcup_{k\in F_i}Z(\mu_i k)))=\partial E_\infty=\bigsqcup_{i\in I}(Z(\nu_i)\backslash(\sqcup_{k\in F_i}Z(\nu_i k))).\]
One has
\[1_U=\sum_{i\in I} T_{\mu_i}(1-\sum_{k\in F_i}T_kT_k^*)T^*_{\nu_i}\in N(\mathcal{O}_\infty, C(\partial E_\infty)).\]
Since $F_i, I$ are all finite,
there is $N\in\mathbb{N}$ such that $\{\mu_i, \nu_i,  \mu_i k, \nu_i k\}_{i\in I, k\in F_i}\subset \partial E^f_N$ (i.e., $1_U\in U(\mathcal{E}_n)\subset U(\mathcal{O}_\infty)$).
The direct compuation yields
\begin{align*}
    1_U=&\sum_{i\in I} T_{\mu_i}(1-\sum_{k\in F_i}T_kT_k^*)T^*_{\nu_i}\\
    =&\sum_{i\in I} T_{\mu_i}(1-\sum_{k=1}^NT_kT_k^*+\sum_{l\in \{1,\cdots, N\}\backslash F_i}T_l T_l^*)T^*_{\nu_i}\\
    =&\left(\sum_{i\in I}\sum_{l\in \{1, \cdots, N\}\backslash F_i}T_{\mu_i l}T_{\nu_i l}^*\right)+\sum_{i\in I}T_{\mu_i}e_n T^*_{\nu_i},
\end{align*}
and $1_U\in U(\mathcal{E}_n)$ implies
\[\bigsqcup_{i\in I, \; l\in \{1, \cdots, N\}\backslash F_i}Z(\mu_i l)\sqcup \bigsqcup_{i\in I}\{\mu_i\}=\partial E_n=\bigsqcup_{i\in I, \; l\in \{1, \cdots, N\}\backslash F_i}Z(\nu_i l)\sqcup \bigsqcup_{i\in I}\{\nu_i\}\]
(i.e., $1_U\in \Gamma_N$).
So one can conclude $\{1_U\in N(\mathcal{O}_\infty, C(\partial E_\infty))\;|\; U\in [[\mathcal{G}_{E_\infty}]]\}=\bigcup_{n=2}^\infty\Gamma_n=\Gamma_\infty$.
\end{proof}
\begin{lem}\label{lift}
    \begin{enumerate}
    \item For any presentation $g=\sum_{i=1}^M S_{\mu_i}S_{\nu_i}^*\in V_n$ of the element $g\in V_n$,
        there exist $\{ v_k \}_k , \{ w_k \} _k \subset E_n^f$ such that
        \[\sum_{i=1}^M T_{\mu_i} T_{\nu_i}^*+\sum_kT_{v_k}e_n T_{w_k}^*\in \Gamma_n.\]
        \item For every $\mu, \nu\in E^f_n$ with $|\mu|, |\nu|\geq 1$,
        there are elements of the form
        \[S_\mu S_\nu^*+\sum_i S_{\mu_i}S_{\nu_i}^*\in V_n,\quad (T_\mu T_\nu^*+\sum_i T_{\mu_i}T_{\nu_i}^*)+\sum_kT_{v_k}e_n T_{w_k}^*\in \Gamma_n.\]
        \item Fix $\mu\in E^f_n$.
        For an element $g\in V_n$ satisfying $g(x)=x$, for every $x\in Z^\infty(\mu)$,
        one has a presentation
        \[g=S_\mu S_\mu^*+\sum_iS_{\mu_i}S_{\nu_i}^*.\]
    \end{enumerate}
\end{lem}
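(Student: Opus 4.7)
The presentation $g = \sum_{i=1}^M S_{\mu_i} S_{\nu_i}^*$ means that $\{\mu_i\}_i$ and $\{\nu_i\}_i$ are complete antichains in $E_n^f$, giving partitions $\bigsqcup_i Z^\infty(\mu_i) = E_n^\infty = \bigsqcup_i Z^\infty(\nu_i)$. The plan is to take $\{v_k\}_k$ to be the (finite) set $E_n^f \setminus \bigsqcup_i Z(\mu_i)$ of strict prefixes of the $\mu_i$'s in the rooted $n$-regular tree, and similarly $\{w_k\}_k$ for the $\nu_i$'s; by construction these give partitions of $\partial E_n$ of the form appearing in Lemma \ref{tabl}. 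The only issue is that we need $|\{v_k\}| = |\{w_k\}|$ so that one can choose a bijection $v_k \leftrightarrow w_k$ and land inside $\Gamma_n$. This is the main technical point, and it follows by induction on leaf-expansions: replacing a leaf $\lambda$ of an antichain by its $n$ children $\lambda 1, \ldots, \lambda n$ converts one leaf into one strict prefix and creates $n-1$ new leaves, so the quantity $(\#\text{leaves}) - 1 - (n-1)\cdot(\#\text{strict prefixes})$ is invariant from its value $0$ at the trivial antichain $\{v_0\}$. Hence any antichain of size $M$ in the tree has exactly $(M-1)/(n-1)$ strict prefixes, and any bijection between the two sides yields the desired lift.

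\textbf{Part (2).} Writing $\mu = \mu_1 \cdots \mu_{l_1}$ with $l_1 \ge 1$, the canonical antichain
\[\{\mu\} \sqcup \{\mu_1 \cdots \mu_{k-1} j : 1 \le k \le l_1,\; j \in \{1, \ldots, n\} \setminus \{\mu_k\}\}\]
has size $1 + l_1(n-1)$ and contains $\mu$. Do the same for $\nu$, obtaining an antichain of size $1+l_2(n-1)$ containing $\nu$. Because $|\mu|, |\nu| \ge 1$, each antichain has a leaf distinct from $\mu$ (resp. $\nu$) available for expansion, and each such expansion increases the size by $n-1$ without disturbing the distinguished leaf. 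Equalize the sizes, pair $\mu$ with $\nu$, and bijectively pair the remaining leaves to obtain an element $S_\mu S_\nu^* + \sum_i S_{\mu_i} S_{\nu_i}^* \in V_n$; then invoke part (1) to lift it to the corresponding element of $\Gamma_n$.

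\textbf{Part (3).} Fix any presentation $g = \sum_i S_{\mu_i} S_{\nu_i}^*$. Since $\{\nu_i\}$ is an antichain, exactly one of the following holds: (A) there is a unique $\nu_{i_0}$ which is a prefix of $\mu$, or (B) no $\nu_i$ is a prefix of $\mu$, in which case $I := \{i : \mu \text{ is a prefix of } \nu_i\}$ yields a partition $Z^\infty(\mu) = \bigsqcup_{i \in I} Z^\infty(\nu_i)$. Using the description $g(\nu_i y) = \mu_i y$, the identity $g|_{Z^\infty(\mu)} = \mathrm{id}$ forces $\mu_{i_0} = \nu_{i_0}$ in Case A and $\mu_i = \nu_i$ for every $i \in I$ in Case B; indeed, otherwise the image $\mu_i y$ would fail to equal $\nu_i y$ for varying tails $y$, a contradiction via elementary string comparison. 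In Case A, replace the single term $S_{\nu_{i_0}} S_{\nu_{i_0}}^*$ by $S_\mu S_\mu^* + \sum_\lambda S_\lambda S_\lambda^*$, where $\{\mu\} \sqcup \{\lambda\}$ is any antichain partitioning $Z^\infty(\nu_{i_0})$ (possible since $\nu_{i_0}$ is a prefix of $\mu$); in Case B, $\sum_{i \in I} S_{\nu_i} S_{\nu_i}^* = S_\mu S_\mu^*$ directly. Either way, $g$ acquires the required summand $S_\mu S_\mu^*$, completing the proof; the only real obstacle throughout is the leaf-counting identity that drives part (1).
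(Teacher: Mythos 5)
Your proposal is correct, and in parts (1) and (3) it takes a genuinely different route from the paper. For part (1) the paper argues operator-theoretically: $1-\sum_i T_{\mu_i}T_{\mu_i}^*$ and $1-\sum_i T_{\nu_i}T_{\nu_i}^*$ are finite-rank projections (they vanish under $\pi$), and since the unitary $g$ lifts to a Fredholm operator of index $0$ (using $K_1(\mathcal{O}_n)=0$), the two defect spaces have equal dimension --- which is exactly your equality $|\{v_k\}|=|\{w_k\}|$. Your leaf-expansion count, namely that a complete antichain of size $M$ has exactly $(M-1)/(n-1)$ strict prefixes, proves the same equality by elementary tree combinatorics; it silently uses the standard fact that every finite complete antichain is reachable from $\{v_0\}$ by leaf expansions, which you should at least acknowledge. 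The combinatorial route is more self-contained and even produces the exact value of $N$, while the paper's route is shorter given the ambient machinery and explains \emph{why} the counts agree (index zero). For part (2) your construction via the two ``spine'' antichains containing $\mu$ and $\nu$ is actually more careful than the paper's, which writes $(Z^\infty(\mu)\cup Z^\infty(\nu))^c=\bigsqcup_i Z^\infty(\mu_i)$ and tacitly assumes $Z^\infty(\mu)\cap Z^\infty(\nu)=\emptyset$; your version covers the case where one of $\mu,\nu$ is a prefix of the other. For part (3) the paper refines the given presentation by appending all words of length $|\mu|$, so that every source cylinder is either contained in or disjoint from $Z^\infty(\mu)$, and then reads off $\zeta_j=\eta_j$; your two-case analysis (either some $\nu_{i_0}$ is a prefix of $\mu$, or $Z^\infty(\mu)$ is partitioned by the $Z^\infty(\nu_i)$ with $i\in I$) reaches the same conclusion without the uniform subdivision. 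Both arguments are valid.
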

\begin{proof}
\begin{enumerate}
\item Note that
\[\pi (1-\sum_iT_{\mu_i}T_{\mu_i}^*)=1-gg^{-1}=0\]
implies taht $1-\sum_{i=1}^MT_{\mu_i}T_{\mu_i}^*$ is a finite rank projection.
Since $\sum_{i=1}^MS_{\mu_i}S_{\nu_i}^*\in U(\mathcal{O}_n)$ and $K_1(\mathcal{O}_n)=0$,
the Fredholm index computation yields
\begin{align*}|\{v\in E^f_n \mid v \;\text{does not start with any}\; \mu_i\}|=&\sum_{v\in E^f_n} \langle \left(1-\sum_{i=1}^M T_{\mu_i}T_{\mu_i}^*\right)\delta_v |\delta_v \rangle_{\ell^2(E^f_n)}\\
=&\operatorname{dim}_{\mathbb{C}}\operatorname{Im}(1-\sum_{i=1}^MT_{\mu_i}T_{\mu_i}^*)\\
=&\operatorname{dim}_{\mathbb{C}}\operatorname{Im}(1-\sum_{i=1}^MT_{\nu_i}T_{\nu_i}^*)\\
=&|\{w\in E^f_n \mid w\;\text{does not start with any}\; \nu_i\}|=:N<\infty.\end{align*}
Thus, there are $\{v_k\}_{k=1}^N,\;\;\{w_k\}_{k=1}^N\subset E^f_n$ and a well-defined lift $\sum_{i=1}^MT_{\mu_i}T_{\nu_i}^*+\sum_{k=1}^NT_{v_k}e_n T_{w_k}^*\in\Gamma_n$.

\item By the statement 1., it is enough to show the case of $V_n$.
If $Z^\infty(\mu)\cup Z^\infty(\nu)=\partial O_n=E^\infty_n$,
one has $S_\mu S_\nu^*+S_\nu S_\mu^*\in V_n$.
Otherwise, there is a decomposition $(Z^\infty(\mu)\cup Z^\infty(\nu))^c=\bigsqcup_i Z^\infty(\mu_i)$ and an element
$S_\mu S_\nu^*+S_\nu S_\mu^*+\sum_i S_{\mu_i}S_{\mu_i}^*\in V_n$.

\item For a presentation $g=\sum_iS_{\mu'_i}S_{\nu'_i}^*\in V_n$,
the subdivision $g=\sum_i\sum_{\nu, \;|\nu|=|\mu|}S_{\mu'_i\nu}S_{\nu'_i\nu}^*$ gives us the dichotomy $Z^\infty(\mu)\cap Z^\infty(\mu'_i\nu)=Z^\infty(\mu'_i\nu)$, or $Z^\infty(\mu)\cap Z^\infty(\mu'_i\nu)= \emptyset$.
Thus we may assume \[g=\sum_j S_{\zeta_j}S_{\eta_j}^*+\sum_i S_{\mu_i}S_{\nu_i}^*,\quad \bigsqcup_j Z^\infty(\zeta_j)=Z^\infty(\mu)=\bigsqcup_j Z^\infty(\eta_j).\]
The assumption $g(x)=x$ implies $\zeta_j=\eta_j$.
Thus, we can conclude
\[g=\sum_j S_{\zeta_j}S_{\zeta_j}^*+\sum_i S_{\mu_i} S_{\nu_i}^*=S_\mu S_\mu^*+\sum_i S_{\mu_i} S_{\nu_i}^*.\]
\end{enumerate}
\end{proof}

\subsection{Abelianizations of $V_n, \Gamma_\infty$}
We review the abelianizations of the topological full groups $V_n, \Gamma_\infty$ from the viewpoint of AH conjecture.
By X. Li's recent breakthrough,
we can check the AH conjecture for various ample groupoids.
Note that groupoids $\mathcal{G}_{O_n}, \mathcal{G}_{E_\infty}$ are purely infinite and have comparison (i.e., for any non-empty clopen sets $U, V \subset \mathcal{G}^{(0)}$ there is a bisection $\tau$ with $s(\tau)=U, r(\tau)\subset V$).
For $2 \leq n<\infty$, the groupoid $\mathcal{G}_{E_n}$ is not minimal and does not have comparison (consider $U=\partial E_n$ and $V=\{v_0\}$).
\begin{remark}
Roughly speaking, the minimality and comparison property of groupoids correspond to the simplicity and purely infiniteness of groupoid C*-algebras.
    Thus, the above observations are obvious in the operator algebraic sense because $\mathcal{O}_n, \mathcal{O}_\infty$ are simple, purely infinite but $\mathcal{E}_n$ are not simple nor purely infinite.
\end{remark}
Combining the homology computations for $\mathcal{G}_{E_\infty}$ and $\mathcal{G}_{O_n}$ with X. Li's result and H. Matui's stability result \cite[Thm. 3.6.]{matui},
we obtain the following.
For a group $G$, we write the commutator subgroup as $G'$ and the abelianization as $G^{ab}:=G/G'$,
and we write the quotient map as $G\ni g\mapsto [g]^{ab}\in G^{ab}$.
\begin{thm}[{\cite[Thm. 6.12., Cor. 6.14.]{XinLi}}]\label{AH}
    \begin{enumerate}
        \item We have an isomorphism 
        \[H_0(\mathcal{G}_{O_n})\otimes\mathbb{Z}/2\mathbb{Z}\ni (1_{Z(1)}+\operatorname{Im}\partial_1)\otimes\bar{1} \mapsto [g_0]^{ab}\in V_n^{ab}\]
        where
        \[g_0:=S_1S_2^*+S_2S_1^*+\sum_{i=3}^nS_iS_i^*\in V_n.\]
        \item The following diagram is commutative
        \[\xymatrix{
        H_0(\mathcal{G}_{E_\infty})\otimes \mathbb{Z}/2\mathbb{Z}\ar[r]^-{\zeta}\ar[d]^{\cong}&\Gamma_\infty^{ab}\ar[d]^{\cong}\\
        H_0(R_\mathbb{N}\times\mathcal{G}_{E_\infty})\otimes\mathbb{Z}/2\mathbb{Z}\ar[r]^-{\zeta^s}&[[R_\mathbb{N}\times\mathcal{G}_{E_\infty}]]^{ab},
        }\]
        where the map $\zeta$ is an isomorphism sending $1_{Z(1)}+\operatorname{Im}\partial_1=1\in H_0(\mathcal{G}_{E_\infty})=\mathbb{Z}$ to
        $[1_{U_0}]^{ab}$ for
        \begin{align*}1_{U_0}:=&(1-T_1T_1^*-T_2T_2^*)+T_1T_2^*+T_2T_1^*
        \in \Gamma_2\subset\Gamma_\infty,\end{align*}
        and the isomorphism $\zeta^s$ sends $1_{\{1\}\times Z(1)}+\operatorname{Im}\partial_1=1\in H_0(R_\mathbb{N}\times\mathcal{G}_{E_\infty})=\mathbb{Z}$ to 
        \[\left[((1, 1)\times U_0)\sqcup ((R_\mathbb{N}^{(0)}\backslash(1, 1))\times \mathcal{G}_{E_\infty}^{(0)})\right]^{ab}\in [[R_\mathbb{N}\times\mathcal{G}_{E_\infty}]]^{ab}.\]
        \item For the stabilization $R_\mathbb{N}\times \mathcal{G}_{E_n}$, ($2 \leq n<\infty$),
        we have the isomorphism
        \[H_0(\mathcal{G}_{E_n})\otimes \mathbb{Z}/2\mathbb{Z}\cong H_0(R_\mathbb{N}\times \mathcal{G}_{E_n})\otimes\mathbb{Z}/2\mathbb{Z}\xrightarrow{\zeta^s} [[R_\mathbb{N}\times \mathcal{G}_{E_n}]]^{ab},\]
        which sends $1_{Z(1)}+\operatorname{Im}\partial_1=1\in H_0(\mathcal{G}_{E_n})=\mathbb{Z}$ to
        \[\left[((1, 1)\times U_0)\sqcup ((R_\mathbb{N}^{(0)}\backslash(1, 1))\times \mathcal{G}_{E_n}^{(0)})\right]^{ab}\in [[R_\mathbb{N}\times\mathcal{G}_{E_n}]]^{ab}.\]
    \end{enumerate}
\end{thm}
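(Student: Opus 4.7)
My proof would apply X. Li's recent AH-conjecture theorem \cite{XinLi}, combined with Matui's stability theorem \cite[Thm. 3.6.]{matui} and the homology computations already recorded from \cite[Thm. 4.6.]{NO2}. X. Li's theorem, in the form needed here, gives for any locally compact Hausdorff, effective, ample groupoid $\mathcal{G}$ that is minimal, purely infinite and has comparison (and with $H_1(\mathcal{G})=0$) an isomorphism
\[
H_0(\mathcal{G}) \otimes \mathbb{Z}/2\mathbb{Z} \xrightarrow{\cong} [[\mathcal{G}]]^{ab},
\]
under which a class $1_A \otimes \bar{1}$ corresponds to the commutator class of any involutive bisection that swaps $A$ with a disjoint $H_0$-equivalent clopen subset and is identity elsewhere. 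Since $H_1$ vanishes for all of the groupoids under consideration, the only thing to check for each part is the minimality/pure infiniteness/comparison trio, and then to track the explicit generator.

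For part 1, the groupoid $\mathcal{G}_{O_n}$ is minimal, purely infinite and has comparison, so X. Li's theorem applies directly. The explicit generator $1_{Z^\infty(1)} + \operatorname{Im}\partial_1$ is sent to the commutator class of any involutive bisection swapping $Z^\infty(1) \leftrightarrow Z^\infty(2)$ and fixing $\bigsqcup_{i\geq 3} Z^\infty(i)$; the element $g_0$ realizes precisely such a bisection, so $[g_0]^{ab}$ is the image of the generator.

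For part 2, the groupoid $\mathcal{G}_{E_\infty}$ is minimal (by Remark \ref{r1}), purely infinite, and has comparison, so X. Li again produces $\zeta$ as an isomorphism; the same bisection-swap analysis identifies $[1_{U_0}]^{ab}$ with the image of $1 \in H_0(\mathcal{G}_{E_\infty})=\mathbb{Z}$, since $U_0$ swaps $Z(1) \leftrightarrow Z(2)$ and is identity on the complement. To produce the commutative square, I would invoke Matui's stability theorem to furnish the right-hand vertical isomorphism $\Gamma_\infty^{ab} \cong [[R_\mathbb{N}\times \mathcal{G}_{E_\infty}]]^{ab}$, combine with Morita invariance of $H_0$ for the left-hand vertical, and then use naturality of X. Li's construction with respect to the corner inclusion at $(1,1)$ to identify $\zeta$ and $\zeta^s$ with the respective generator images.

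The main obstacle is part 3, since for $n<\infty$ the groupoid $\mathcal{G}_{E_n}$ is neither minimal (by Remark \ref{r1} the finite-path part $E^f_n \subsetneq \partial E_n$ is isolated) nor has comparison (as noted in the paragraph above the statement, $\{v_0\}$ cannot be dominated by $\partial E_n$), so X. Li's theorem is not directly applicable to $[[\mathcal{G}_{E_n}]]$. The fix is to pass to the stabilization $R_\mathbb{N}\times \mathcal{G}_{E_n}$, which does satisfy all three of X. Li's hypotheses, while $H_0$ is unchanged by Morita invariance; this yields the isomorphism $\zeta^s$. The delicate step I anticipate is verifying that the explicit generator $1_{\{1\}\times Z(1)} + \operatorname{Im}\partial_1$ maps to the prescribed corner-swap class under X. Li's construction. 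This reduces to the same bisection-tracking as in parts 1 and 2, now carried out inside the stabilized groupoid; once completed, the proof is finished.
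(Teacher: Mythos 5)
Your overall strategy coincides with the paper's: quote X. Li's AH results together with Matui's stability theorem and the known homology computations, and then reduce everything to tracking the explicit generator of $H_0\otimes\mathbb{Z}/2\mathbb{Z}$ through a swap bisection. Parts 1 and 2 are fine in this form. However, your justification of part 3 contains a genuine error: you claim that the stabilization $R_\mathbb{N}\times\mathcal{G}_{E_n}$ ``does satisfy all three of X. Li's hypotheses'' (minimality, pure infiniteness, comparison). It does not. Stabilizing cannot create minimality: the $R_\mathbb{N}\times\mathcal{G}_{E_n}$-orbit of a point $(1,x)$ with $x\in E_n^\infty$ stays inside $\mathbb{N}\times E_n^\infty$, which is a proper closed subset of the unit space. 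Comparison fails for exactly the same reason as before stabilizing: a bisection with source $\{1\}\times\partial E_n$ and range inside $\{1\}\times\{v_0\}$ would be a bijection from an uncountable set onto at most one point. And $\{1\}\times\{v_0\}$ is a singleton, so it cannot contain two disjoint equivalent copies of itself, ruling out pure infiniteness. So the version of X. Li's theorem you state (for minimal, purely infinite groupoids with comparison) simply does not apply to $R_\mathbb{N}\times\mathcal{G}_{E_n}$, and as written your part 3 has no valid input theorem. The reason stabilization helps is different: X. Li's Corollary 6.14 is a statement about the stabilized topological full group $[[R_\mathbb{N}\times\mathcal{G}]]$ of a general ample groupoid and does not require minimality or comparison; combined with $H_1(\mathcal{G}_{E_n})=0$ it yields the isomorphism $\zeta^s$ directly. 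You need to invoke that form of the result rather than pretend the hypotheses of the unstable theorem are restored.

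A secondary point: the step you flag as ``delicate'' in part 3 --- identifying the image of $1_{\{1\}\times Z(1)}+\operatorname{Im}\partial_1$ with the corner-swap class $\left[((1,1)\times U_0)\sqcup((R_\mathbb{N}^{(0)}\backslash(1,1))\times\mathcal{G}_{E_n}^{(0)})\right]^{ab}$ --- is where the paper does actual work, and you leave it unexecuted. X. Li's map sends the generator to the class of the transposition bisection $((1,2)\times\partial E_n)\sqcup((2,1)\times\partial E_n)\sqcup\bigsqcup_{k\geq 3}((k,k)\times\partial E_n)$ exchanging two full copies of $\partial E_n$, which is not literally the corner element $((1,1)\times U_0)\sqcup\cdots$; one must exhibit a conjugacy between them inside $[[R_\mathbb{N}\times\mathcal{G}_{E_n}]]$ (the paper does this by an explicit unitary conjugation in $U(\mathbb{M}_3(\mathcal{E}_2))$ using the isometries $T_1,T_2$ and the projection $e_2$). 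Saying it ``reduces to the same bisection-tracking as in parts 1 and 2'' is not enough, because in parts 1 and 2 the generator and the target element live in the same unstabilized group, whereas here one must move between the full transposition of two copies of the unit space and a swap supported in a single corner.
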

\begin{proof}
    We check the statement 3. because other statements follow from the same argument.
    The isomorphism $H_0(\mathcal{G}_{E_n})\cong H_0(R_\mathbb{N}\times\mathcal{G}_{E_n})$ sends $1_{\partial E_n}+\operatorname{Im}\partial_1$ to 
    \[1_{\{1\}\times \partial E_n}+\operatorname{Im}\partial_1\in C_c(\mathbb{N}\times \partial E_n, \mathbb{Z})/\operatorname{Im}\partial_1.\]
    Then, the map $\zeta^s$ sends this element to the class of the bisection
    \[((1, 2)\times \partial E_n)\sqcup((2, 1)\times \partial E_n)\sqcup\bigsqcup_{k=3}^N((k, k)\times \partial E_n)\in [[R_{\{1, \cdots, N\}}\times\mathcal{G}_{E_n}]]\subset [[R_\mathbb{N}\times\mathcal{G}_{E_n}]].\]
    Now the completely same computation as
    \[\left(\begin{array}{ccccc}
    T_2&T_1&e_2\\
    0&0&T_1^*\\
    0&0&T_2^*
    \end{array}\right)
    \left(\begin{array}{ccccc}
    0&1&0\\
    1&0&0\\
    0&0&1
    \end{array}\right)
    \left(\begin{array}{ccccc}
    T_2^*&0&0\\
    T_1^*&0&0\\
    e_2&T_1&T_2
    \end{array}\right)
    =\left(\begin{array}{ccccc}
    T_1T_2^*+T_2T_1^*+e_2&0&0\\
    0&1&0\\
    0&0&1
    \end{array}\right)\in U(\mathbb{M}_3(\mathcal{E}_2))
    \]
    shows
    \[\left[((1, 2)\times \partial E_n)\sqcup((2, 1)\times \partial E_n)\sqcup\bigsqcup_{k=3}^N((k, k)\times \partial E_n)\right]^{ab}=\left[((1, 1)\times U_0)\sqcup ((R_\mathbb{N}^{(0)}\backslash (1, 1))\times \mathcal{G}_{E_n}^{(0)})\right]^{ab}.\]
\end{proof}

\begin{remark}
    To the best of the our knowledge,
    there seems to be no previous results on AH conjecture for groupoids such as $\mathcal{G}_{E_n} (2 \leq n<\infty)$, which is not of the form $R_\mathbb{N}\times \mathcal{G}$,  is not minimal, has many isolated points in the unit space $\mathcal{G}_{E_n}^{(0)}$, and the graph $E_n$ has a sink.
    Thus, it would not be so obvious to see $\Gamma_n^{ab}=\mathbb{Z}/2\mathbb{Z}$ which will be observed in Sec.\ref{main1}.
\end{remark}


\subsection{$\mathbb{R}$-actions and KMS states}\label{gac}
Let $\mathcal{G}$ be an ample groupoid with a continuous groupoid homomorphism
\[c \colon \mathcal{G}\to\mathbb{Z}, \quad c(gh)=c(g)+c(h), \;\;\; (g, h)\in\mathcal{G}^{(2)}.\]
Then, there is a well-defined $\mathbb{R}$-action
\[\gamma_c(t) \colon C_c(\mathcal{G})\ni f(g)\mapsto e^{ic(g)t}f(g)\in C_c(\mathcal{G})\]
which extends to $\mathbb{R}$-actions on the reduced and full groupoid C*-algebras $C^*_r(\mathcal{G})$ and $C^*(\mathcal{G})$.

For the transformation groupoid
\[\mathcal{G}^{(0)}\rtimes [[\mathcal{G}]]:=\{(U(x), U, x)\in \mathcal{G}^{(0)}\times [[\mathcal{G}]]\times \mathcal{G}^{(0)}\},\]
there is a natural groupoid homomorphism
\[q \colon \mathcal{G}^{(0)}\rtimes[[\mathcal{G}]]\ni (U( x), U, x)\mapsto g_x\in \mathcal{G}\]
where the element $g_x\in\mathcal{G}$ is uniquely determined by $U\cap s^{-1}(x)=\{g_x\}$.
For a bisection $V\subset \mathcal{G}$,
one has
\[q^{-1}(V)\cap \{(U( x), U, x)\in \mathcal{G}^{(0)}\rtimes[[\mathcal{G}]]\}=\{(U( x), U, x)\;|\; x\in s^{-1}(U\cap V)\},\]
and $q$ is continuous.

\begin{dfn}
    We define the following cocycles of the Deaconu--Renault groupoids
    \[c_n \colon \mathcal{G}_{O_n}\ni (x, k, y)\mapsto k\in\mathbb{Z},\]
    \[d_n \colon \mathcal{G}_{E_n}\ni (x, k, y)\mapsto k\in\mathbb{Z}.\]
    The pullbacks $c_n\circ q, d_n\circ q$ are denoted by
    \[c^f_n \colon \partial O_n\rtimes V_n\to \mathcal{G}_{O_n}\xrightarrow{c_n}\mathbb{Z},\]
    \[d^f_n \colon \partial E_n\rtimes \Gamma_n\to\mathcal{G}_{E_n}\xrightarrow{d_n}\mathbb{Z}.\]
\end{dfn}
The above cocycles define the $\mathbb{R}$ actions
\[\gamma_{c_n} \colon \mathbb{R}\curvearrowright\mathcal{O}_n,\quad \gamma_{c^f_n} \colon \mathbb{R}\curvearrowright C(\partial O_n)\rtimes_r V_n,\]
\[\gamma_{d_n} \colon \mathbb{R}\curvearrowright \mathcal{E}_n, \mathcal{O}_\infty,\quad \gamma_{d^f_n} \colon \mathbb{R}\curvearrowright C(\partial E_n)\rtimes_r \Gamma_n.\]

For an $\mathbb{R}$-action $\gamma(t)\in\operatorname{Aut}(A)$ of a C*-algebra,
a $\gamma$-invariant state $\varphi \colon A\to\mathbb{C}$ is called $\gamma \text{-KMS}_\beta$-state for $\beta\in\mathbb{R}_{\geq 0}$ if 
\[\varphi (ab)=\varphi (b\gamma({i\beta})(a)) \]
holds for any $b\in A$ and any analytic element $a\in A$.
Here, an element $a\in A$ is called analytic if the continuous map $\mathbb{R}\ni t\mapsto \gamma(t)(a)\in A$ extends to an entire function $\mathbb{C}\ni z\mapsto \gamma(z)(a)\in A$.

The state $\varphi$ is called ground state (KMS-state for $\beta=+\infty$) if
\[|\varphi(b\gamma(z)(a))|\leq ||b||||a||, \; \text{for}\;\; z\in \mathbb{C},\; \operatorname{Im} z\geq 0\]
holds for any $b\in A$ and any analytic element $a\in A$.
A ground state is automatically $\gamma$-invariant.

We refer to \cite{P} for the basics of the KMS state and ground state.
\begin{remark}
In general,
one can not determine all analytic elements.
However,
it is enough to check the above KMS conditions for a dense subset of analytic elements by \cite[Prop. 8.12.3.]{P}.
    
\end{remark}

\begin{ex}
    The following elements are analytic:
    \[S_\mu S_\nu^*\in\mathcal{O}_n, \quad \gamma_{c_n}(z)(S_\mu S_\nu^*)=e^{iz(|\mu|-|\nu|)}S_\mu S_\nu^*,\]
    \[T_{v_k}e_nT_{w_k}^*\in\mathcal{E}_n,\quad \gamma_{d_n}(z)(T_{v_k}e_nT_{w_k}^*)=e^{iz(|v_k|-|w_k|)}T_{v_k}e_nT_{w_k}^*,\]
    \[1_{Z(\mu_k)}\lambda_{_{\mu_i}g_{\nu_i}}\in C(\partial O_n)\rtimes V_n,\quad \gamma_{c_n}(z)(1_{Z(\mu_k)}\lambda_{_{\mu_i}g_{\nu_i}})=e^{iz(|\mu_k|-|\nu_k|)}1_{Z(\mu_j)}\lambda_{_{\mu_i}g_{\nu_i}},\]
    for $\mu, \nu\in\partial O_n$, $v_k, w_k\in E^f_n$ and $\bigsqcup_{i=1}^NZ^\infty(\mu_i)=\partial O_n=\bigsqcup_{i=1}^NZ^\infty(\nu_i)$, $k\in \{1,\cdots, N\}$.
\end{ex}

For the KMS states of (full) groupoid C*-algebra,
we also refer to S. Neshveyev's general result \cite[Thm. 1.3.]{Nes}.
This result says that
${\gamma_c}\text{-KMS}_\beta$-states on $C^*(\mathcal{G})$ are given by integrating traces on the stabilizers along the quasi-invariant measure of the unit space which provides the cocycle as its Radon--Nikodym derivatives:
\[\varphi \colon C_c(\mathcal{G})\ni f\mapsto \int_{\mathcal{G}^{(0)}}\sum_{s(g)=r(g)=x} f(g)\varphi_x(g)dm(x)\in \mathbb{C}\]
where $m$ is a quasi-invariant measure on $\mathcal{G}^{(0)}$ with its Radon--Nikodym cocycle $e^{-\beta c}$  and $\varphi_x \colon C^*(s^{-1}(x)\cap r^{-1}(x))\to\mathbb{C}, \; x\in \mathcal{G}^{(0)}$ are the traces on the stabilizers satisfying several conditions.

The KMS-states with respect to $\gamma _{c_n} \colon \mathbb{R} \curvearrowright \mathcal{O}_n$, $\gamma _{d_{\infty}} \colon \mathbb{R} \curvearrowright \mathcal{O}_{\infty}$, and $\gamma _{d_n} \colon \mathbb{R} \curvearrowright \mathcal{E}_n$ are computed as follows.

\begin{thm}[cf. \cite{Evans, OP}]
    \begin{enumerate}
        \item There is a $\gamma _{c_n}$-KMS$_{\beta}$-state on $\mathcal{O}_n$ if and only if $\beta = \log n$.
        There is a unique $\gamma _{c_n}$-KMS$_{\log n}$-state $\varphi _n$ on $\mathcal{O}_n$ given by
        \begin{equation*}
            \varphi _n (S_\nu S_\mu ^*) = 
            \begin{cases}
                n^{-| \mu |} & \text{if $\mu = \nu$} \\
                0 & \text{if $\nu \neq \mu$}.
            \end{cases}
        \end{equation*}
        There is no $\gamma _{c_n}$-ground state on $\mathcal{O}_n$.
        \item There is no $\gamma _{d_{\infty}}$-KMS$_\beta$-state on $\mathcal{O}_{\infty}$ for $\beta \geq 0$.
        There is a unique $\gamma_{d_{\infty}}$-ground state $\varphi _{\infty}$ on $\mathcal{O}_\infty$ given by
        \begin{equation*}
            \varphi _\infty (T_\nu T_\mu ^*) = 
            \begin{cases}
                1 & \text{if $\mu = \nu = \emptyset$} \\
                0 & \text{if $\mu \neq \emptyset$ or $\nu \neq \emptyset$}.
            \end{cases}
        \end{equation*}
        \item There is a $\gamma _{d_n}$-KMS$_\beta$-state on $\mathcal{E}_n$ if and only if $\beta \geq \log n$.
        For $\beta \geq \log n$, there is a unique $\gamma _{d_n}$-KMS$_\beta$-state $\varphi_{n , \beta}$ on $\mathcal{E}_n$ given by 
        \begin{equation*}
            \varphi _{n, \beta} (T_\nu T_\mu ^*) = 
            \begin{cases}
                e^{-| \mu | \beta} & \text{if $\mu = \nu$} \\
                0 & \text{if $\mu \neq \nu$}.
            \end{cases}
        \end{equation*}
        There is a unique $\gamma _{d_n}$-groud state $\varphi _{n, \infty}$ on $\mathcal{E}_n$ given by 
        \begin{equation*}
            \varphi _{n , \infty}(T_\nu T_\mu ^*) = 
            \begin{cases}
                1 & \text{if $\mu = \nu = \emptyset$} \\
                0 & \text{if $\mu \neq \emptyset$ or $\nu \neq \emptyset$}.
            \end{cases}
        \end{equation*}
    \end{enumerate}
\end{thm}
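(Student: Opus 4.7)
My plan is to exploit the gauge invariance shared by every $\mathrm{KMS}_\beta$-state (for $0\leq \beta<\infty$) and every ground state, combined with the $\mathrm{KMS}_\beta$ (resp. ground-state) identity and the defining relations, to completely pin down the state and the admissible inverse temperatures. Since $\gamma_{c_n}(t)(S_\mu S_\nu^*) = e^{it(|\mu|-|\nu|)}S_\mu S_\nu^*$ and analogously for $T_\mu T_\nu^*$ in $\mathcal{E}_n$ and $\mathcal{O}_\infty$, gauge invariance immediately forces $\varphi(S_\mu S_\nu^*) = 0$ whenever $|\mu|\neq|\nu|$. Applying the $\mathrm{KMS}_\beta$ condition to the analytic element $a = S_\mu$ gives
\[\varphi(S_\mu S_\nu^*) = \varphi(S_\nu^*\gamma_{c_n}(i\beta)(S_\mu)) = e^{-\beta|\mu|}\varphi(S_\nu^*S_\mu) = e^{-\beta|\mu|}\delta_{\mu,\nu},\]
which is already the formula in (1), and similarly for (3). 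The admissible $\beta$ are then singled out by the defining relations: the Cuntz relation $\sum_i S_iS_i^* = 1$ forces $n e^{-\beta}=1$ and hence $\beta=\log n$ in $\mathcal{O}_n$; in $\mathcal{O}_\infty$ the inequality $\sum_{i=1}^N T_iT_i^*\leq 1$ yields $N e^{-\beta}\leq 1$ for every $N$, impossible for any finite $\beta\geq 0$; in $\mathcal{E}_n$ the same inequality gives only the lower bound $\beta\geq \log n$.

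For existence of KMS states I would construct $\varphi_\beta = m_\beta\circ E$, where $E$ is the canonical conditional expectation onto the Cartan subalgebra and $m_\beta$ is the $e^{-\beta}$-conformal measure on $\partial O_n$ (resp. $\partial E_n$): in $\mathcal{O}_n$ this is the Bernoulli product measure $\bigotimes(\sum_{j=1}^n \tfrac{1}{n}\delta_j)$, and in $\mathcal{E}_n$ for $\beta>\log n$ one takes the measure assigning weight $e^{-\beta|\mu|}(1-ne^{-\beta})$ to each isolated point $\mu\in E_n^f\subset\partial E_n$. Equivalently, for $\beta>\log n$ in $\mathcal{E}_n$ the Gibbs state $Z_\beta^{-1}\mathrm{tr}(e^{-\beta N}\,\cdot\,)$ on the Fock representation with $Z_\beta = (1-ne^{-\beta})^{-1}$ and $N$ the number operator realizes $\varphi_\beta$; at the critical value $\beta=\log n$ one instead pulls back the unique $\mathrm{KMS}_{\log n}$-state of $\mathcal{O}_n$ along $\pi\colon\mathcal{E}_n\twoheadrightarrow\mathcal{O}_n$, which is forced because $\varphi(e_n)=1-ne^{-\log n}=0$. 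The ground states on $\mathcal{E}_n$ and $\mathcal{O}_\infty$ are realized by the vacuum state $\omega(\,\cdot\,) = \langle\,\cdot\,\delta_{v_0},\delta_{v_0}\rangle$ on the Fock representation.

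Uniqueness of ground states follows from the ground-state inequality applied to the analytic element $a=T_\mu^*$ with $\gamma(z)(T_\mu^*) = e^{-iz|\mu|}T_\mu^*$: as $\operatorname{Im}(z)\to+\infty$, $e^{\operatorname{Im}(z)|\mu|}|\varphi(bT_\mu^*)|\leq \|b\|$ forces $\varphi(bT_\mu^*)=0$ for every $b$ and every $|\mu|\geq 1$, which combined with gauge invariance leaves only $\varphi(1)=1$ as a non-zero matrix coefficient. The non-existence of a ground state on $\mathcal{O}_n$ comes from the same computation: $\varphi(bS_\mu^*)=0$ for $|\mu|\geq 1$ forces $\varphi(S_\mu S_\mu^*)=0$ on setting $b=S_\mu$, whereas the Cuntz relation demands $1=\varphi(\sum_i S_iS_i^*)=0$, a contradiction. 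The main obstacle will be the existence direction for KMS states in $\mathcal{E}_n$ throughout the closed half-line $\beta\geq\log n$---specifically verifying positivity of the candidate functional and the consistency of the two constructions (lift from $\mathcal{O}_n$ at the critical point versus the Gibbs state above it) as $\beta\searrow\log n$.
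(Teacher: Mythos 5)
The paper does not prove this theorem at all: it is quoted as a classical result with the citation ``cf.\ \cite{Evans, OP}'', so there is no internal proof to compare against. Your reconstruction is essentially the standard Olesen--Pedersen/Evans argument and is correct. The key steps all check out: gauge invariance (which is part of the paper's definition of a $\mathrm{KMS}_\beta$-state) kills the off-diagonal spectral subspaces; the KMS identity with $a=S_\mu$, $b=S_\nu^*$ pins down $\varphi(S_\mu S_\nu^*)=e^{-\beta|\mu|}\delta_{\mu,\nu}$, which already gives uniqueness since $\operatorname{span}\{S_\mu S_\nu^*\}$ (resp.\ $\operatorname{span}\{T_\mu T_\nu^*\}$) is a dense $*$-subalgebra; and evaluating the defining relations selects the admissible $\beta$. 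The Gibbs-state realization $Z_\beta^{-1}\operatorname{tr}(e^{-\beta N}\cdot)$ with $Z_\beta=(1-ne^{-\beta})^{-1}$ is a clean way to get existence and positivity for $\beta>\log n$, and your remark that the two constructions need not be matched as $\beta\searrow\log n$ is actually moot: uniqueness at each fixed $\beta$ is already forced by the diagonal formula, so no limiting argument is required. Two details you gloss over but which are routine: (i) passing from $\varphi(e_n)=0$ to $\varphi$ vanishing on the ideal $\mathbb{K}$ at $\beta=\log n$ needs the Cauchy--Schwarz estimate $|\varphi(T_ve_nT_w^*)|^2\le\varphi(T_ve_nT_v^*)\varphi(T_we_nT_w^*)$ together with $\varphi(T_ve_nT_v^*)=e^{-\beta|v|}(1-ne^{-\beta})=0$; and (ii) verifying that the vacuum state actually satisfies the ground-state inequality on a general analytic element $\sum c_{\mu\nu}T_\mu T_\nu^*$ (not just on individual monomials) requires a boundedness-plus-Phragm\'en--Lindel\"of argument of the kind the paper itself uses in the proof of Theorem \ref{MM}. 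Neither point is a gap in the strategy.
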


\subsection{The unique trace property}
Let $\Gamma$ be a discrete group.
The reduced group $\mathrm{C}^*$-algebra $\mathrm{C}^*_r (\Gamma) \subset \mathbb{B}(\ell^2(\Gamma))$ has the canonical tracial state $x \mapsto \langle x \delta _e , \delta _e \rangle$.
A group $\Gamma$ is said to have the unique trace property if the canonical tracial state is the only tracial state on $\mathrm{C}^*_r(\Gamma)$.

To state the characterization of the unique trace property, we review boundary actions of groups.
Let $\Gamma \curvearrowright X$ be an action of a discrete group on a compact Hausdorff space.
This action is said to be minimal if there is no non-trivial $\Gamma$-invariant closed subset.
It is said to be strongly proximal if $\Gamma.\mu$ contains some Dirac measure for every $\mu \in \operatorname{Prob} X$.
A compact $\Gamma$-space $X$ is said to be $\Gamma$-boundary if the action is minimal and strongly proximal.
For instance, the canonical action $V_n \curvearrowright \partial O_n$ of the Higman--Thompson group on the Cantor set is the boundary action.

\begin{thm}[\cite{BKKO}]
For a discrete group $\Gamma$, the following are equivalent.
\begin{enumerate}
    \item The group $\Gamma$ has the unique trace property.
    \item The group $\Gamma$ admits a faithful boundary.
    \item The only amenable normal subgroup of $\Gamma$ is $\{ e \}$.
\end{enumerate}
\end{thm}

\begin{remark}
    The groups $V_n, \Gamma _{\infty}$ are $\mathrm{C}^*$-simple (see \cite{LBMB}, \cite{SB}), i.e., their reduced group $\mathrm{C}^*$-algebras are simple, while $\Gamma _n$ is not because $\mathfrak{S}_{E_n^f} \lhd \Gamma _n$ is a non-trivial amenable normal subgroup (in fact, this subgroup coincides with the amenable radical of $\Gamma _n$ since $\Gamma _n / \mathfrak{S}_{E_n^f} \cong V_n$ (see Lem. \ref{quotientGamma})).
    By \cite[Thm. 4.1.]{BKKO} and C*-simplicity of $V_n$,
    the traces on $C^*_r(\Gamma_n)$ are in one to one correspondence to the traces on $C^*_r(\mathfrak{S}_{E^f_n})$ which are invariant under the adjoint action of $\Gamma_n$.
    In Cor. \ref{uhoo},
    we will see that every tarce of $C^*_r(\mathfrak{S}_{E^f_n})$ is automatically $\Gamma_n$-invariant.
\end{remark}

\section{Main Results}
First, we will compute the normal subgroups and abelianization of $\Gamma_n$.
Then, we will determine the KMS states of $C(\partial O_n)\rtimes_r V_n$ and $C(\partial E_n)\rtimes_r\Gamma_n$ with respect to the $\mathbb{R}$-actions defined in Sec. \ref{gac}.

\subsection{Abelianization and normal subgroups of $\Gamma_n$}\label{main1}
We will show the following.
\begin{thm}\label{ns}
     A non-trivial normal subgroup of $\Gamma_n$ is either $\Gamma_n'$, $\mathfrak{S}_{E^f_n}$ or $\mathfrak{S}_{E^f_n}'$.
     A non-trivial normal subgroup of $\Gamma_n'$ is either $\mathfrak{S}_{E^f_n}$ or $\mathfrak{S}_{E^f_n}'$.
\end{thm}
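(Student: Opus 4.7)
The plan is to analyze normal subgroups via the exact sequence
\[
1 \to \mathfrak{S}_{E^f_n} \xrightarrow{i} \Gamma_n \xrightarrow{\pi} V_n \to 1,
\]
combining three ingredients: the simplicity of $V_n'$, the classification of normal subgroups of $\mathfrak{S}_{E^f_n}$ as $\{e\}, \mathfrak{S}_{E^f_n}', \mathfrak{S}_{E^f_n}$, and the computation $\Gamma_n^{\mathrm{ab}} \cong \mathbb{Z}/2$ from Prop.~\ref{abg}. For $N \lhd \Gamma_n$ the image $\pi(N) \lhd V_n$ lies in $\{e, V_n', V_n\}$ and $N \cap \mathfrak{S}_{E^f_n}$ lies in $\{e, \mathfrak{S}_{E^f_n}', \mathfrak{S}_{E^f_n}\}$. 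I argue case by case on this pair.

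The case $\pi(N) = \{e\}$ immediately yields $N \in \{\mathfrak{S}_{E^f_n}', \mathfrak{S}_{E^f_n}\}$. When $\pi(N) \neq \{e\}$, the first key step is to rule out $N \cap \mathfrak{S}_{E^f_n} = \{e\}$: normality of both $N$ and $\mathfrak{S}_{E^f_n}$ forces $[N, \mathfrak{S}_{E^f_n}] \subseteq N \cap \mathfrak{S}_{E^f_n} = \{e\}$, so every $g \in N$ commutes with each transposition in $\mathfrak{S}_{E^f_n}$. Since $\Gamma_n$ preserves the isolated set $E^f_n \subset \partial E_n$ and $\{g(v), g(w)\} = \{v, w\}$ for every $v \neq w$ forces $g|_{E^f_n} = \mathrm{id}$, density of $E^f_n$ in $\partial E_n$ (Remark~\ref{r1}) together with the faithful action of $\Gamma_n$ on $\partial E_n$ gives $g = e$, contradicting $\pi(g) \neq e$.

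Thus $N \cap \mathfrak{S}_{E^f_n} \supseteq \mathfrak{S}_{E^f_n}'$ and $\pi(N) \supseteq V_n'$. The quotient $\Gamma_n/N$ then sits in the central extension
\[
1 \to \mathfrak{S}_{E^f_n}/(N \cap \mathfrak{S}_{E^f_n}) \to \Gamma_n/N \to V_n/\pi(N) \to 1,
\]
with both ends of order at most two — centrality holds because conjugation preserves the sign of a finite permutation. A group of order at most four is abelian, so $N \supseteq \Gamma_n'$; since $|\Gamma_n : \Gamma_n'| = 2$ by Prop.~\ref{abg}, every proper such $N$ equals $\Gamma_n'$, completing the first assertion.

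For normal subgroups of $\Gamma_n'$ I apply the same scheme to $1 \to \Gamma_n' \cap \mathfrak{S}_{E^f_n} \to \Gamma_n' \to V_n' \to 1$. The case $\pi(M) = \{e\}$ gives $M \in \{\mathfrak{S}_{E^f_n}, \mathfrak{S}_{E^f_n}'\}$ (the inclusion $\mathfrak{S}_{E^f_n} \subseteq \Gamma_n'$ is available only for odd $n$); the no-complement step goes through verbatim because the centralizer of an infinite alternating group in the full symmetric group is trivial; and $M \cap \mathfrak{S}_{E^f_n} = \Gamma_n' \cap \mathfrak{S}_{E^f_n}$ combined with $\pi(M) = V_n'$ forces $M = \Gamma_n'$. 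The main obstacle is the remaining sub-case — relevant only for odd $n$ — in which $M \lhd \Gamma_n'$ has index two with $M \cap \mathfrak{S}_{E^f_n} = \mathfrak{S}_{E^f_n}'$ and $\pi(M) = V_n'$. I rule it out by viewing $M$ as the kernel of a homomorphism $\phi: \Gamma_n' \to \mathbb{Z}/2$ restricting to the sign on $\mathfrak{S}_{E^f_n}$. For any $g_0 \in \Gamma_n \setminus \Gamma_n'$, the product $\phi \cdot \phi^{g_0}$ is trivial on $\mathfrak{S}_{E^f_n}$ and hence factors through the perfect group $V_n' \cong \Gamma_n'/\mathfrak{S}_{E^f_n}$, so $\phi = \phi^{g_0}$. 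Consequently $\phi$ extends to $\hat\phi : \Gamma_n \to \mathbb{Z}/2$ that is nontrivial on $\mathfrak{S}_{E^f_n}$, contradicting the fact that for odd $n$ the isomorphism $\pi^{\mathrm{ab}} : \Gamma_n^{\mathrm{ab}} \cong V_n^{\mathrm{ab}}$ from Prop.~\ref{abg} forces every homomorphism $\Gamma_n \to \mathbb{Z}/2$ to vanish on $\mathfrak{S}_{E^f_n}$.
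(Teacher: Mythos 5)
Your argument is correct and shares the paper's skeleton --- the extension $1 \to \mathfrak{S}_{E^f_n} \to \Gamma_n \xrightarrow{\pi} V_n \to 1$, the classification of normal subgroups of the finitary symmetric group, the triviality of the centralizer of $\mathfrak{S}_{E^f_n}$ in $\Gamma_n$ (your transposition computation is an equivalent, action-level version of the paper's Lem.~\ref{io}, which is proved there by a compact-operator argument in the Fock representation), and the computation $\Gamma_n^{ab} \cong \mathbb{Z}/2\mathbb{Z}$ from Prop.~\ref{abg}. The genuine difference is in the hardest sub-case, a putative $M \lhd \Gamma_{2n+1}'$ of index two with $M \cap \mathfrak{S}_{E^f_{2n+1}} = \mathfrak{S}_{E^f_{2n+1}}'$. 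The paper excludes it by showing directly that the transposition $(1,v_0)$ is a product of commutators \emph{of elements of} $\Gamma_{2n+1}'$, which requires a second appeal to X.~Li's machinery (the vanishing of $([[R_\mathbb{N}\times\mathcal{G}_{E_{2n+1}}]]')^{ab}$ from \cite[Cor.~6.10]{XinLi}) together with the stabilization/conjugation-by-$W$ computation from Prop.~\ref{abg}. You instead observe that the associated character $\phi\colon \Gamma_n' \to \mathbb{Z}/2\mathbb{Z}$ is conjugation-invariant under all of $\Gamma_n$ --- because $\phi\cdot\phi^{g_0}$ kills $\mathfrak{S}_{E^f_n}$ (conjugation preserves cycle type, hence sign) and then factors through the perfect group $V_n'$ --- so $\ker\phi$ is normal in $\Gamma_n$ and yields an abelian quotient of order $4$, contradicting $\Gamma_n^{ab}\cong\mathbb{Z}/2\mathbb{Z}$. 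This is purely group-theoretic and reuses only Prop.~\ref{abg}; it is arguably cleaner, and your ``order at most four, hence abelian'' shortcut likewise compresses the paper's case-by-case treatment of the first assertion.

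One small imprecision: from $\phi = \phi^{g_0}$ you conclude that $\phi$ ``extends to $\hat\phi\colon\Gamma_n\to\mathbb{Z}/2\mathbb{Z}$.'' That extension need not exist as stated, since a priori $\Gamma_n/\ker\phi$ could be cyclic of order $4$, in which case no $\mathbb{Z}/2\mathbb{Z}$-valued character of $\Gamma_n$ restricts to $\phi$. But the contradiction is available without the extension: $\ker\phi \lhd \Gamma_n$ with $[\Gamma_n:\ker\phi]=4$ forces $\Gamma_n/\ker\phi$ to be an abelian quotient of order $4$, which is already incompatible with $|\Gamma_n^{ab}|=2$ (equivalently, $\Gamma_n' \subseteq \ker\phi$ contradicts $[\Gamma_n':\ker\phi]=2$). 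With that one-line repair the proof is complete.
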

By the above observation, we get the previously known result on the simplicity of $\Gamma'_\infty$.
\begin{cor}[{cf \cite{matu2, NO}}]
    The commutator subgroup $\Gamma_\infty'$ is simple.
\end{cor}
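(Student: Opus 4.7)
The plan is to show that any non-trivial normal subgroup $N \lhd \Gamma_\infty'$ coincides with $\Gamma_\infty'$, using the filtration $\Gamma_\infty' = \bigcup_n \Gamma_n'$ and the classification in Theorem~\ref{ns}. First, pick $1 \neq g \in N \cap \Gamma_n'$ for some $n$. Normality of $N$ in $\Gamma_\infty'$ makes $N \cap \Gamma_n'$ a non-trivial normal subgroup of $\Gamma_n'$, so Theorem~\ref{ns} gives $\mathfrak{S}_{E^f_n}' \subseteq N$. Since $E^f_n$ is infinite, I fix a $3$-cycle $\tau = (\mu_1\ \mu_2\ \mu_3) \in \mathfrak{S}_{E^f_n}' \subseteq N$; note also that $\tau \in \Gamma_m'$ for every $m \geq n$, since $\Gamma_n' \subseteq \Gamma_m'$.

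The key step is to verify that for every $m > n$, the image $\pi_m(\tau) \in V_m$ under the canonical surjection $\pi_m \colon \Gamma_m \twoheadrightarrow V_m$ (induced by $\mathcal{E}_m \to \mathcal{O}_m$, $e_m \mapsto 0$, with kernel $\mathfrak{S}_{E^f_m}$) is non-trivial. Writing $\tau$ via Lemma~\ref{tabl} as
\[
\tau = \sum_{i \in \mathbb{Z}/3\mathbb{Z}} T_{\mu_{i+1}} e_n T_{\mu_i}^* + \Bigl(1 - \sum_{i \in \mathbb{Z}/3\mathbb{Z}} T_{\mu_i} e_n T_{\mu_i}^*\Bigr)
\]
and expanding $e_n = e_m + \sum_{j=n+1}^{m} T_j T_j^*$ in $\mathcal{E}_m$, a direct computation shows that $\pi_m(\tau)$ cyclically permutes, for each $j \in \{n+1, \ldots, m\}$, the three cylinders $Z^\infty(\mu_1 j), Z^\infty(\mu_2 j), Z^\infty(\mu_3 j) \subset \partial O_m$ and is the identity elsewhere, which is visibly non-trivial. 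Hence $\tau \notin \ker \pi_m = \mathfrak{S}_{E^f_m}$.

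Consequently, $N \cap \Gamma_m'$ is a non-trivial normal subgroup of $\Gamma_m'$ containing the element $\tau \notin \mathfrak{S}_{E^f_m}$. Since every proper non-trivial normal subgroup of $\Gamma_m'$ listed in Theorem~\ref{ns} is contained in $\mathfrak{S}_{E^f_m}$, we must have $N \cap \Gamma_m' = \Gamma_m'$ for every $m > n$, giving $N \supseteq \bigcup_{m > n} \Gamma_m' = \Gamma_\infty'$. The only delicate point is the explicit computation of $\pi_m(\tau)$: the essential observation is that the $3$-cycle $\tau$, originally realized on the isolated points $\mu_1, \mu_2, \mu_3 \in E^f_n$, picks up additional cyclic permutations of the cylinders $Z^\infty(\mu_i j)$ (for $j > n$) when passing from $\mathcal{E}_n$ to $\mathcal{E}_m$, and these permutations survive the projection killing $e_m$.
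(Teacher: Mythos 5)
Your proof is correct and follows essentially the same route as the paper: intersect $N$ with the filtration $\Gamma_\infty'=\bigcup_m\Gamma_m'$, apply Theorem~\ref{ns} at each level, and use the fact that the inclusion $\Gamma_n'\subset\Gamma_m'$ does not carry $\mathfrak{S}_{E^f_n}'$ into $\mathfrak{S}_{E^f_m}$. The only difference is cosmetic: where the paper asserts $i(\mathfrak{S}_{E^f_m}')\not\subset\mathfrak{S}_{E^f_{m+1}}$ in one line, you verify it explicitly on a $3$-cycle via $e_n=e_m+\sum_{j=n+1}^m T_jT_j^*$, which is a correct and welcome elaboration of the same step.
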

\begin{proof}
    Note that $\Gamma_\infty'=\bigcup_{n=2}^\infty\Gamma_n'$.
    For a normal subgroup $\{e\}\subsetneq N\subsetneq\Gamma_\infty'$,
    there is $m_0\in\mathbb{N}$ satisfying 
    \[\{e\}\subsetneq N\cap\Gamma_m'\subsetneq \Gamma_m',\quad \text{for every}\;\;m\geq m_0.\]
    By Thm. \ref{ns}, one has $\mathfrak{S}_{E^f_m}'\subseteq N\cap \Gamma_m'\subseteq \mathfrak{S}_{E^f_m}$ for $m\geq m_0$.
    For the inclusion $i \colon E_m\subset E_{m+1}$ with $i(1+\mathbb{K})\not \subset 1+\mathbb{K}$,
    one has $i(\mathfrak{S}_{E^f_m}')\not\subset \mathfrak{S}_{E^f_{m+1}}$ which leads to a contradiction
    \[N\cap\Gamma_{m+1}'\not \subset\mathfrak{S}_{E^f_{m+1}}.\]
    Thus,  a normal subgroup of $\Gamma_\infty'$ must be either $\{e\}$ or $\Gamma_\infty'$.
    \end{proof}

As in Sec. \ref{cte},
we identify the Fock  space \[\mathcal{F}(\mathbb{C}^n):=\mathbb{C}\delta_{v_0}\oplus \bigoplus_{k=1}^\infty (\mathbb{C}^n)^{\otimes k}\]
with the Hilbert space $\ell^2(E^f_n)$ by identifying $e_{\mu_1}\otimes\cdots\otimes e_{\mu_k}\in (\mathbb{C}^n)^{\otimes k}$ with $\delta_{\mu}\in \ell^2(E^f_n)$ ($\mu=\mu_1\cdots\mu_k\in E^f_n$).
The Fock representation $\Gamma_n\subset U(\mathcal{E}_n)\subset \mathbb{B}(\ell^2(E^f_n))$ remembers the action of $\Gamma_n\curvearrowright E^f_n\subset \partial E_n$ by the equation
\[g(\delta_\mu)=\delta_{g(\mu)},\quad \mu\in E^f_n,\;\; g\in \Gamma_n.\]
The quotient map $\pi \colon \mathcal{E}_n\ni T_i\mapsto S_i\in \mathcal{O}_n=\mathcal{E}_n/\mathbb{K}(\ell^2(E^f_n))$ induces a surjective group homomorphism $\pi \colon \Gamma_n\to V_n$.
\begin{lem} \label{quotientGamma}
    $\operatorname{Ker}(\pi \colon \Gamma_n\to V_n)=\pi^{-1}(1_{\mathcal{O}_n})\cap \Gamma_n=(1_{\mathcal{E}_n}+\mathbb{K})\cap \Gamma_n=\mathfrak{S}_{E^f_n}.$
\end{lem}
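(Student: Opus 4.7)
The plan is to decompose the chain of claimed equalities into bookkeeping and genuine content. The first equality $\operatorname{Ker}(\pi) = \pi^{-1}(1_{\mathcal{O}_n}) \cap \Gamma_n$ holds by the definition of a group kernel, and the second equality $\pi^{-1}(1_{\mathcal{O}_n}) \cap \Gamma_n = (1_{\mathcal{E}_n} + \mathbb{K}) \cap \Gamma_n$ is immediate from $\pi^{-1}(1_{\mathcal{O}_n}) = 1_{\mathcal{E}_n} + \operatorname{Ker}(\pi \colon \mathcal{E}_n \to \mathcal{O}_n) = 1_{\mathcal{E}_n} + \mathbb{K}$. So the genuine content is the last equality $(1_{\mathcal{E}_n} + \mathbb{K}) \cap \Gamma_n = \mathfrak{S}_{E^f_n}$, which I verify by two inclusions.

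For $\mathfrak{S}_{E^f_n} \subseteq (1_{\mathcal{E}_n} + \mathbb{K}) \cap \Gamma_n$, I note that any finite permutation $\sigma$ of $E^f_n$ acts on the Fock space $\ell^2(E^f_n)$ as a unitary agreeing with the identity outside a finite-dimensional subspace, so $\sigma \in 1_{\mathcal{E}_n} + \mathbb{K}$. To exhibit $\sigma$ in the form given by Lemma \ref{tabl}, I pick $L$ strictly larger than the length of every vertex in the (finite) support of $\sigma$ and write
\[\sigma = \sum_{|\mu| = L} T_\mu T_\mu^* + \sum_{|w| < L} T_{\sigma(w)} e_n T_w^*,\]
using the clopen partition $\partial E_n = \bigsqcup_{|\mu|=L} Z(\mu) \sqcup \bigsqcup_{|v|<L} \{v\}$ and the fact that $\sigma$ restricts to a permutation of the finite set of vertices of length less than $L$.

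For the reverse inclusion, I take $g \in \Gamma_n$ with $\pi(g) = 1_{\mathcal{O}_n}$ and write $g = \sum_i T_{\mu_i} T_{\nu_i}^* + \sum_k T_{v_k} e_n T_{w_k}^*$ via Lemma \ref{tabl}. Applying $\pi$ annihilates the $e_n$ terms and leaves $\sum_i S_{\mu_i} S_{\nu_i}^* = 1_{V_n}$, which is the identity homeomorphism of $\partial O_n$. The crux of the proof, and the step I expect to be the main obstacle, is to deduce $\mu_i = \nu_i$ for every $i$: for every infinite continuation $y$ with $\nu_i y \in E^\infty_n$ the identity condition forces $\mu_i y = \nu_i y$, and since $n \geq 2$ admits non-eventually-periodic paths, this requires $|\mu_i| = |\nu_i|$ and hence $\mu_i = \nu_i$. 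Once this is established, the partition conditions of Lemma \ref{tabl} force $\{v_k\} = \{w_k\}$ as finite sets; and in the Fock representation $T_v e_n T_w^* = \lvert \delta_v \rangle \langle \delta_w \rvert$ while $\sum_i T_{\mu_i} T_{\mu_i}^*$ is the projection complementary to $\operatorname{span}\{\delta_{v_k} : k\}$. Therefore $g$ acts as a finite permutation of the orthonormal basis $\{\delta_\mu\}_{\mu \in E^f_n}$ that is the identity outside $\{v_k\}$, so $g \in \mathfrak{S}_{E^f_n}$.
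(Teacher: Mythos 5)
Your proposal is correct, but for the one inclusion with real content it takes a genuinely different route from the paper. The paper proves $(1_{\mathcal{E}_n}+\mathbb{K})\cap\Gamma_n\subseteq\mathfrak{S}_{E^f_n}$ by a purely operator-theoretic argument: every element of $\Gamma_n$ permutes the basis $\{\delta_\mu\}_{\mu\in E^f_n}$ of the Fock space, and if $g\in 1+\mathbb{K}$ moved infinitely many $\delta_{\mu_k}$, then $\|(1-g)\delta_{\mu_k}\|=\sqrt{2}$ would contradict the compactness of $1-g$ against the weakly null sequence $(\delta_{\mu_k})_k$; hence the permutation is finite. You instead work combinatorially from the normal form of Lemma \ref{tabl}: you use that $\pi(g)=\sum_i S_{\mu_i}S_{\nu_i}^*=1$ forces $\mu_i=\nu_i$ (via faithfulness of the $V_n$-action on $\partial O_n$ and the existence of non-eventually-periodic paths), whence the partition conditions give $\{v_k\}_k=\{w_k\}_k$ and $g$ is visibly a finite permutation of that set. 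Both arguments are sound; the paper's is shorter and exploits the $1+\mathbb{K}$ description directly, while yours is more explicit, identifies exactly which normal forms lie in the kernel, and as a by-product re-derives the reverse inclusion $\mathfrak{S}_{E^f_n}\subseteq\Gamma_n\cap(1+\mathbb{K})$ by the same explicit formula $\sum_{|\mu|=L}T_\mu T_\mu^*+\sum_{|w|<L}T_{\sigma(w)}e_nT_w^*$ that the paper also uses for that direction. One small point worth making explicit in your write-up: the deduction $\mu_i=\nu_i$ from $\mu_i y=\nu_i y$ for all $y$ should note that if $|\mu_i|\neq|\nu_i|$ then one word is a prefix of the other, say $\mu_i=\nu_i\eta$ with $|\eta|\geq 1$, and then $\eta y=y$ forces every $y$ to be $\eta$-periodic — this is where the non-periodic path is actually used.
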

\begin{proof}
    Every element in $\Gamma_n$ acts on $E^f_n=\{\delta_\mu\}_{\mu\in E^f_n}$ by a finite or infinite permutation.
If $g\in (1+\mathbb{K})\cap \Gamma_n$ acts on $E^f_n$ as an infinite permutation,
there are $\{\mu_k\}_{k=1}^\infty\subset E^f_n$ with $g(\mu_k)\not=\mu_k$ (i.e., $||(1-g)(\delta_{\mu_k})||_2=\sqrt{2}$).
Since $WOT-\lim_{k\to\infty}\delta_{\mu_k}=0$, the compact operator $(1-g)$ must satisfy $\lim_{k\to\infty}||(1-g)(\delta_{\mu_k})||_2=0$.
This is a contradiction, and one has $(1+\mathbb{K})\cap \Gamma_n\subset\mathfrak{S}_{E^f_n}$.
    
    Fix $h\in \mathfrak{S}_{E^f_n}$.
    Then, there is $N\in\mathbb{N}$ with \[h\in\mathfrak{S}_{\{v_0\}\cup\bigcup_{k=1}^N\{1, \cdots, n\}^k}=\mathfrak{S}_{\{\nu\in E^f_n\; \mid \; |\nu|\leq N\}}\subset \mathfrak{S}_{E^f_n}.\]
    Now one has
    \[h=(\sum_{|\mu|=N+1}T_\mu T_\mu^*)+(\sum_{|\nu|\leq N}T_{h(\nu)}e_nT_{\nu}^*)\in\Gamma_n\cap (1+\mathbb{K}).\]
\end{proof}
\begin{prop}\label{ex}
    The exact sequence
    \[1\to\mathfrak{S}_{E^f_n}\xrightarrow{i} \Gamma_n\xrightarrow{\pi} V_n\to 1\]
    induces the exact sequence
    \[\mathfrak{S}_{E^f_n}^{ab}\xrightarrow{i^{ab}} \Gamma_n^{ab}\xrightarrow{\pi^{ab}} V_n^{ab}\to 1.\]
\end{prop}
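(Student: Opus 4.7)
The plan is to verify this by the standard right-exactness of the abelianization functor, which holds for any short exact sequence of groups; no special feature of $\Gamma_n$ or $\mathfrak{S}_{E^f_n}$ is needed beyond what the statement already provides.

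First I would establish surjectivity of $\pi^{ab}$. Since $\pi \colon \Gamma_n \to V_n$ is surjective by Lem. \ref{quotientGamma} (or directly from the construction of $\pi$), and since passing to the abelianization is functorial, the induced map $\pi^{ab} \colon \Gamma_n^{ab} \to V_n^{ab}$ factors as the quotient $V_n \to V_n^{ab}$ composed with $\pi$, and therefore remains surjective.

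Next I would check exactness at $\Gamma_n^{ab}$. The inclusion $\operatorname{im}(i^{ab}) \subseteq \ker(\pi^{ab})$ is immediate from $\pi \circ i = 1$. For the reverse inclusion, take $g \in \Gamma_n$ with $[g]^{ab} \in \ker(\pi^{ab})$, which means $\pi(g) \in V_n'$. Write
\[
\pi(g) = \prod_{k=1}^{\ell} [a_k, b_k], \qquad a_k, b_k \in V_n,
\]
and use surjectivity of $\pi$ to pick lifts $\tilde{a}_k, \tilde{b}_k \in \Gamma_n$. Then the element
\[
h := g \cdot \prod_{k=1}^{\ell} [\tilde{a}_k, \tilde{b}_k]^{-1} \in \Gamma_n
\]
satisfies $\pi(h) = 1_{V_n}$, so by Lem. \ref{quotientGamma} it lies in $\mathfrak{S}_{E^f_n} = i(\mathfrak{S}_{E^f_n})$. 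Passing to abelianizations, the commutator product becomes trivial in $\Gamma_n^{ab}$, and we obtain $[g]^{ab} = [h]^{ab} \in \operatorname{im}(i^{ab})$.

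There is no real obstacle here; the argument is formal. The only point to be mindful of is that exactness on the left, i.e.\ injectivity of $i^{ab}$, is \emph{not} asserted and would in general fail: for instance, commutators produced by conjugating elements of $\mathfrak{S}_{E^f_n}$ by elements of $\Gamma_n$ may land in $\mathfrak{S}_{E^f_n}'$ only modulo extra relations inherited from $\Gamma_n$. The results of Sec. \ref{main1} (in particular the explicit computation of $\Gamma_n^{ab}$ and the description of $i^{ab}$ in the main theorem) are precisely what is needed to analyse the kernel of $i^{ab}$ in detail, but they are not required for the right-exact sequence asserted here.
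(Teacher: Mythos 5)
Your proposal is correct and follows essentially the same route as the paper: the paper also deduces surjectivity of $\pi^{ab}$ from that of $\pi$, and for exactness at $\Gamma_n^{ab}$ it writes any $g$ with $\pi(g)\in V_n'$ as $g=hx$ with $h\in\mathfrak{S}_{E^f_n}$ and $x\in\Gamma_n'$, which is exactly the decomposition you produce by lifting the commutators of $\pi(g)$ through $\pi$ and invoking Lem.~\ref{quotientGamma}. Your closing remark that injectivity of $i^{ab}$ is neither claimed nor automatic also matches the paper, which defers that analysis to Lem.~\ref{cd} and Prop.~\ref{abg}.
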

\begin{proof}
Fix $[g]^{ab}\in \operatorname{Ker}\pi^{ab}$ with a lift $g\in\Gamma_n$.
Since $\pi(g)\in V_n'$ and the surjectivity of $\pi$,
there are $x\in \Gamma_n'$ and $h\in\mathfrak{S}_{E^f_n}$ satisfying $g=hx$.
So we have $[g]^{ab}=i^{ab}([h]^{ab})$, which implies $\operatorname{Im} i^{ab}=\operatorname{Ker}\pi^{ab}$.
\end{proof}
\begin{lem}\label{cd}
    The following diagram commutes
    \[
    \xymatrix{
H_0(R_{E^f_n})\otimes\mathbb{Z}/2\mathbb{Z}\ar[r]^{1-n}\ar[d]^{\cong}&H_0(\mathcal{G}_{E_n})\otimes\mathbb{Z}/2\mathbb{Z}\ar[r]^-{\cong}&H_0(R_{\mathbb{N}}\times\mathcal{G}_{E_n})\otimes\mathbb{Z}/2\mathbb{Z}\ar[d]^{\zeta^s}\\
\mathfrak{S}_{E^f_n}^{ab}\ar[r]^{i^{ab}}&\Gamma_n^{ab}\ar[r]&[[R_\mathbb{N}\times \mathcal{G}_{E_n}]]^{ab},
    }\]
    where the right vertical map is the isomorphism in Thm. \ref{AH}, 3., and the left vertical map is the following isomorphism:
    \[H_0(R_{E^f_n})\otimes\mathbb{Z}/2\mathbb{Z}\ni (1_{\{v_0\}}+\operatorname{Im}\partial_1)\otimes\bar{1}\mapsto \left[(\sum_{i=1}^nT_{1i}T_{1i}^*)+(\sum_{i=2}^nT_iT_i^*)+e_nT_1^*+T_1e_n\right]^{ab}=[(1, v_0)]^{ab}\in \mathfrak{S}_{E^f_n}^{ab}.\]
\end{lem}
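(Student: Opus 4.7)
The plan is to chase the generator $(1_{\{v_0\}} + \operatorname{Im}\partial_1) \otimes \bar{1}$ of $H_0(R_{E^f_n}) \otimes \mathbb{Z}/2\mathbb{Z}$ through both routes and verify that the resulting elements of $[[R_\mathbb{N} \times \mathcal{G}_{E_n}]]^{ab}$ coincide. First I would confirm the left vertical map is a well-defined isomorphism: both $H_0(R_{E^f_n}) \otimes \mathbb{Z}/2\mathbb{Z}$ (via the count map) and $\mathfrak{S}_{E^f_n}^{ab}$ (via the sign map) identify with $\mathbb{Z}/2\mathbb{Z}$, and a direct calculation in the Fock representation (using $\sum_{i=1}^n T_{1i}T_{1i}^* = T_1(1-e_n)T_1^*$ and $\sum_{i=2}^n T_iT_i^* = 1 - T_1T_1^* - e_n$) shows that the displayed element acts on $\partial E_n$ as the transposition of the isolated points $v_0$ and $1$, so it lies in $\ker(\pi \colon \Gamma_n \to V_n) = \mathfrak{S}_{E^f_n}$ and represents $(1, v_0)$.

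Chasing along the top, Example \ref{1-n} yields $[1_{\{v_0\}}] = (1-n) \in H_0(\mathcal{G}_{E_n}) = \mathbb{Z}$, and the stabilization isomorphism together with $\zeta^s$ (Theorem \ref{AH}(3)) gives $(1-n)[((1,1) \times U_0) \sqcup \text{rest}]^{ab}$. Chasing left-then-down, the generator maps to $[((1,1) \times h) \sqcup \text{rest}]^{ab}$, where $h$ denotes the displayed lift. Commutativity is therefore equivalent to the identity
\[[((1,1) \times h) \sqcup \text{rest}]^{ab} = (1-n)[((1,1) \times U_0) \sqcup \text{rest}]^{ab} \quad \text{in } [[R_\mathbb{N} \times \mathcal{G}_{E_n}]]^{ab}.\]

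To establish this, I would use a conjugation-and-naturality argument. Since $v_0$ and $1$ are isolated points of $\partial E_n$ (where $n<\infty$ is essential), the full bisection
\[g := ((1,2) \times Z(\{1\}, 1, 0, \{v_0\})) \sqcup ((2,1) \times Z(\{v_0\}, 0, 1, \{1\})) \sqcup (\text{identity on the clopen complement})\]
lies in $[[R_\mathbb{N} \times \mathcal{G}_{E_n}]]$ and swaps the points $((1,1), 1)$ and $((2,2), v_0)$ on $\mathbb{N} \times \partial E_n$. A direct computation yields $g \cdot ((1,1) \times h \sqcup \text{rest}) \cdot g^{-1} = \sigma_{\{v_0\}}$, the basic involution interchanging $((1,1), v_0)$ with $((2,2), v_0)$ and fixing the rest; hence $[((1,1) \times h) \sqcup \text{rest}]^{ab} = [\sigma_{\{v_0\}}]^{ab}$. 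To identify $[\sigma_{\{v_0\}}]^{ab}$ with $\zeta^s([1_{\{v_0\}}])$, I would show that the assignment $[1_V] \mapsto [\sigma_V]^{ab}$ for clopen $V \subset \partial E_n$ extends to a well-defined homomorphism $H_0(\mathcal{G}_{E_n}) \otimes \mathbb{Z}/2\mathbb{Z} \to [[R_\mathbb{N} \times \mathcal{G}_{E_n}]]^{ab}$ (additivity coming from $\sigma_{V \sqcup W} = \sigma_V \cdot \sigma_W$ for disjoint $V, W$), which must then coincide with $\zeta^s$ on the cyclic group $\mathbb{Z}/2\mathbb{Z}$ once it matches on the generator by Theorem \ref{AH}(3).

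The main obstacle is verifying well-definedness on $H_0$, namely that $[\sigma_A]^{ab} = [\sigma_B]^{ab}$ whenever there is a bisection $U \subset \mathcal{G}_{E_n}$ from $A$ to $B$. This requires exhibiting an explicit conjugating element of $[[R_\mathbb{N} \times \mathcal{G}_{E_n}]]$ that implements $U$ simultaneously at two distinct $R_\mathbb{N}$-fibers so that its adjoint action carries $\sigma_A$ onto $\sigma_B$. Once this is in place, the homological identity $[1_{\{v_0\}}] = (1-n)[1_{Z(1)}]$ propagates to $[\sigma_{\{v_0\}}]^{ab} = (1-n)[\sigma_{\partial E_n}]^{ab} = (1-n)\zeta^s(1)$, completing the verification.
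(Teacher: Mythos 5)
Your proposal is correct and follows essentially the same route as the paper: both composites are evaluated on the generator as classes of explicit bisections, and the key identity $\left[((1,1)\times h)\sqcup\mathrm{rest}\right]^{ab}=\left[\sigma_{\{v_0\}}\right]^{ab}$ (with $h=(1,v_0)\in\mathfrak{S}_{E^f_n}$) is obtained by exactly the conjugation you describe, which the paper writes as a $2\times 2$ matrix identity in $U(\mathbb{M}_2(\mathcal{E}_n))$. The one ``main obstacle'' you flag --- well-definedness of $V\mapsto[\sigma_V]^{ab}$ on $H_0(\mathcal{G}_{E_n})\otimes\mathbb{Z}/2\mathbb{Z}$ --- is not actually an extra step, since $\zeta^s$ is by definition this index map (\cite{XinLi}, \cite{matui}); the paper therefore evaluates $\zeta^s$ directly on the class $1_{\{v_0\}}+\operatorname{Im}\partial_1$ and bypasses your $(1-n)$-expansion entirely.
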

\begin{proof}
    Let 
    \[f_1 \colon H_0(R_{E^f_n})\otimes\mathbb{Z}/2\mathbb{Z}\xrightarrow{1-n}H_0(\mathcal{G}_{E_n})\otimes\mathbb{Z}/2\mathbb{Z}\xrightarrow{\cong} H_0(R_\mathbb{N}\times \mathcal{G}_{E_n})\xrightarrow{\zeta^s}[[R_\mathbb{N}\times\mathcal{G}_{E_n}]]^{ab},\]
    \[f_2 \colon H_0(R_{E^f_n})\otimes \mathbb{Z}/2\mathbb{Z}\xrightarrow{\cong}\mathfrak{S}_{E^f_n}^{ab}\xrightarrow{i^{ab}}\Gamma_n^{ab}\to [[R_\mathbb{N}\times\mathcal{G}_{E_n}]]^{ab}.\]
    By Example \ref{1-n}, the map $(1-n)$ sends $1_{\{v_0\}}+\operatorname{Im}\partial_1\in H_0(R_{E^f_n})$ to $1_{\{v_0\}}+\operatorname{Im}\partial_1\in H_0(\mathcal{G}_{E_n})$.
    Thus, one has 
    \begin{align*}
    &f_1(1_{\{v_0\}})\\
    =&\left[((1,2)\times \{v_0\})\sqcup ((2, 1)\times \{v_0\})\sqcup ((1,1)\times (\sqcup_{i=1}^nZ(i)))\sqcup ((2,2)\times (\sqcup_{i=1}^nZ(i)))\sqcup \bigsqcup_{k=3}^\infty((k, k)\times \partial E_n)\right]^{ab},\\
    &f_2(1_{\{v_0\}})\\
    =&\left[((1, 1)\times (Z(\{1\}, 1, 0, \{v_0\})\sqcup Z(\{v_0\}, 0, 1, \{1\})\sqcup (\sqcup_{i=1}^nZ(1i))\sqcup (\sqcup_{k=2}^nZ(k))))\sqcup\bigsqcup_{l=2}^\infty((l, l)\times \partial E_n)\right]^{ab}.\end{align*}
The following computation shows $f_1(1_{\{v_0\}})=f_2(1_{\{v_0\}})$
    \begin{align*}&\left(
\begin{array}{cccc}
     T_1(1-e_n)T_1^*+\sum_{k=2}T_kT_k^*+T_1e_n&e_n  \\
     e_nT_1^*&1-e_n 
\end{array}\right)\left(\begin{array}{ccc}
     \sum_{i=1}^nT_iT_i^*&e_n  \\
     e_n&\sum_{i=1}^nT_iT_i^* 
\end{array}\right)\\
&\quad\quad\quad\quad\quad\quad\quad\quad\quad\quad\quad\quad\quad\times\left(\begin{array}{cc}
     T_1(1-e_n)T_1^*+\sum_{k=2}T_kT_k^*+e_n T_1^*&T_1e_n  \\
     e_n&1-e_n 
\end{array}\right)\\
=&\left(\begin{array}{cc}
     T_1e_n+e_nT_1^*+\sum_{i=1}^nT_{1i}T_{1i}^*+\sum_{k=2}^nT_kT_k^*&0  \\
     0& 1 
\end{array}\right)\in U(\mathbb{M}_2(\mathcal{E}_n)).
    \end{align*}
\end{proof}

\begin{prop}\label{abg}
\begin{enumerate}
    \item For $2 \leq n<\infty$, the abelianization of $\Gamma_n$ is $\mathbb{Z}/2\mathbb{Z}$ and is computed by
    \[i^{ab} \colon \mathfrak{S}_{E^f_{2n}}^{ab}\cong \Gamma_{2n}^{ab}, \quad \pi^{ab} \colon \Gamma_{2n+1}^{ab}\cong V_{2n+1}^{ab}.\]
    \item There is an injective group homomorphism $\alpha \colon \Gamma_n\to V_{2n+1}$ and the abelianization is given by \[\Gamma_n\xrightarrow{\alpha}V_{2n+1}\to V_{2n+1}^{ab}.\]
    \item The natural map $\Gamma_n^{ab}\to\Gamma_{n+1}^{ab}$ is isomorphism for every $n\in \mathbb{N}$.
    \end{enumerate}
\end{prop}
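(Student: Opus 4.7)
The plan is to combine the exact sequence of Proposition \ref{ex} with the commutative diagram of Lemma \ref{cd}. First, $\mathfrak{S}_{E^f_n}^{ab}\cong \mathbb{Z}/2\mathbb{Z}$ via the sign homomorphism, and by Theorem \ref{AH}.1 we have $V_n^{ab}\cong \mathbb{Z}/(n-1)\mathbb{Z}\otimes\mathbb{Z}/2\mathbb{Z}$, which vanishes when $n$ is even and equals $\mathbb{Z}/2\mathbb{Z}$ when $n$ is odd. The top row of Lemma \ref{cd} is multiplication by $(1-n)\bmod 2$, and both vertical maps (the right one being $\zeta^s$ of Theorem \ref{AH}.3) are isomorphisms, so the parity of $n$ determines whether the composition $\mathfrak{S}_{E^f_n}^{ab}\xrightarrow{i^{ab}}\Gamma_n^{ab}\to [[R_\mathbb{N}\times\mathcal{G}_{E_n}]]^{ab}$ is an isomorphism or is zero.

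For part (1) with $n$ even the composition is an isomorphism, so $i^{ab}$ is injective; together with $V_n^{ab}=0$ the exact sequence then forces $i^{ab}$ to be an isomorphism onto $\mathbb{Z}/2\mathbb{Z}$. For $n$ odd the composition vanishes, and one wants $i^{ab}=0$, equivalently $\pi^{ab}$ an isomorphism onto $V_n^{ab}=\mathbb{Z}/2\mathbb{Z}$. This reduces to showing that the stabilization map $\Gamma_n^{ab}\to [[R_\mathbb{N}\times \mathcal{G}_{E_n}]]^{ab}$ is injective; since $\mathcal{G}_{E_n}$ is not minimal, a direct appeal to Matui's stability theorem has to be replaced either by a bespoke argument or by reducing, via the embedding $\alpha$ of part (2) below, the question to detection in the simpler group $V_{2n+1}^{ab}$. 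This is the main technical obstacle.

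For part (2), define $\alpha\colon \Gamma_n\to V_{2n+1}$ as the restriction of the unital $*$-embedding $\mathcal{E}_n\hookrightarrow \mathcal{O}_{2n+1}$ sending $T_i\mapsto S_i$ for $1\le i\le n$, so that $e_n\mapsto \sum_{j=n+1}^{2n+1}S_jS_j^*$. The image lies in $V_{2n+1}$: a generic element $\sum T_{\mu_i}T_{\nu_i}^*+\sum T_{v_k}e_nT_{w_k}^*$ of $\Gamma_n$ is sent to $\sum S_{\mu_i}S_{\nu_i}^*+\sum_{k,\,j>n}S_{v_kj}S_{w_kj}^*$, which has the correct form and is a unitary, hence belongs to $V_{2n+1}$. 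Injectivity follows because the only nontrivial ideal $\mathbb{K}\lhd \mathcal{E}_n$ has nonzero image (since $e_n$ does). Now $V_{2n+1}^{ab}=\mathbb{Z}/2n\mathbb{Z}\otimes \mathbb{Z}/2\mathbb{Z}=\mathbb{Z}/2\mathbb{Z}$, and the canonical lift $\tilde g_0=T_1T_2^*+T_2T_1^*+\sum_{i=3}^nT_iT_i^*+e_n\in \Gamma_n$ of $g_0\in V_n$ is mapped by $\alpha$ to the single cylinder transposition $S_1S_2^*+S_2S_1^*+\sum_{i=3}^nS_iS_i^*+\sum_{j=n+1}^{2n+1}S_jS_j^*\in V_{2n+1}$, whose class generates $V_{2n+1}^{ab}$. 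By part (1) both groups are $\mathbb{Z}/2\mathbb{Z}$, so $\alpha^{ab}$ is an isomorphism, which is the claim.

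For part (3), the inclusion $\Gamma_n\hookrightarrow \Gamma_{n+1}$ induced by $\mathcal{E}_n\subset \mathcal{E}_{n+1}$ sends the generator $\tilde g_0\in \Gamma_n$ to $T_1T_2^*+T_2T_1^*+\sum_{i=3}^{n+1}T_iT_i^*+e_{n+1}\in \Gamma_{n+1}$ (using $e_n=T_{n+1}T_{n+1}^*+e_{n+1}$). Applying $\alpha_{n+1}$ gives again a single cylinder transposition in $V_{2n+3}$, whose class is the generator of $V_{2n+3}^{ab}=\mathbb{Z}/2\mathbb{Z}$. Since $\tilde g_0$ generates $\Gamma_n^{ab}$ (as in part (2)) and its image detects the generator of $\Gamma_{n+1}^{ab}$ through $\alpha_{n+1}$, the induced map $\Gamma_n^{ab}\to \Gamma_{n+1}^{ab}$ between the two copies of $\mathbb{Z}/2\mathbb{Z}$ is an isomorphism.
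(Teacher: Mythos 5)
Your treatment of the even case of (1), of (2), and of (3) is sound and essentially the paper's argument (your part (3), which detects the generator directly in $V_{2n+3}^{ab}$ via $\alpha_{n+1}$, is a mild streamlining of the paper's reduction to the map $\Gamma_n^{ab}\to\Gamma_{2n+1}^{ab}$). The genuine gap is exactly where you flag it: the odd case of (1). Knowing that the composition $\mathfrak{S}_{E^f_{2n+1}}^{ab}\xrightarrow{i^{ab}}\Gamma_{2n+1}^{ab}\to[[R_\mathbb{N}\times\mathcal{G}_{E_{2n+1}}]]^{ab}$ vanishes does not give $i^{ab}=0$, and by exactness (Prop.~\ref{ex}) one must have $i^{ab}=0$ to conclude $\Gamma_{2n+1}^{ab}\cong V_{2n+1}^{ab}$; otherwise $\Gamma_{2n+1}^{ab}$ has order $4$. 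Moreover, of your two proposed escape routes, the second cannot work even in principle: proving $i^{ab}=0$ means proving that the transposition $(1,v_0)$ is a product of commutators in $\Gamma_{2n+1}$, i.e.\ the \emph{vanishing} of a class, and no homomorphism out of $\Gamma_{2n+1}$ (such as $\alpha$ followed by the abelianization of $V_{4n+3}$) can certify that a class vanishes --- it can only certify that it does not. (Indeed $\alpha((1,v_0))$ is a product of $2n+2$ disjoint cylinder transpositions, so one checks its class in $V_{4n+3}^{ab}$ is already zero for parity reasons; this is consistent with, but does not imply, $[(1,v_0)]^{ab}=0$ in $\Gamma_{2n+1}^{ab}$.)

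The missing ``bespoke argument'' is the heart of the paper's proof. Writing $\tilde U:=((1,1)\times U)\sqcup\bigsqcup_{i\geq 2}((i,i)\times\partial E_{2n+1})$ for the stabilized transposition, Lemma~\ref{cd} and Theorem~\ref{AH} give $[\tilde U]^{ab}=(1-(2n+1))\cdot\zeta^s(1)=0$, so $\tilde U=\prod_i[x_i,y_i]$ with $x_i,y_i\in[[R_{\{1,\dots,N\}}\times\mathcal{G}_{E_{2n+1}}]]$ for some finite $N$. Choosing disjoint cylinders $Z(\mu_1),\dots,Z(\mu_N)\subset\partial E_{2n+1}$ and the bisection $W:=\bigsqcup_{i=1}^N((1,i)\times Z(Z(\mu_i),|\mu_i|,0,\partial E_{2n+1}))$, whose source is all of $\{1,\dots,N\}\times\partial E_{2n+1}$, one conjugates the commutator identity by $W$ and extends by the identity off $\bigsqcup_iZ(\mu_i)$; this transports the identity into $\Gamma_{2n+1}$ itself and exhibits a transposition conjugate to $(1,v_0)$ as a product of commutators in $\Gamma_{2n+1}$. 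Without this step (or a substitute for it), parts (1) odd, (2) and (3) are all incomplete, since (2) and (3) rely on $\Gamma_m^{ab}\cong\mathbb{Z}/2\mathbb{Z}$ for odd $m$.
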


\begin{proof}
    First,
    we show the statement 1.
    By Thm. \ref{AH} and Lem. \ref{cd},
    $\mathfrak{S}_{E^f_{2n}}^{ab}\xrightarrow{i^{ab}}\Gamma_{2n}^{ab}$ is injective,
    and $i^{ab} \colon \mathfrak{S}_{E^f_{2n}}\to \Gamma_{2n}^{ab}$ is bijective by Lem. \ref{ex}.

    By Lemma \ref{cd},
    the composition $\mathfrak{S}_{E^f_{2n+1}}\xrightarrow{i^{ab}}\Gamma_{2n+1}^{ab}\to [[R_\mathbb{N}\times \mathcal{G}_{E_n}]]^{ab}$ is zero. 
    We show that 
    \[[U]^{ab}:=[(\sum_{i=1}^nT_{1i}T_{1i}^*)+(\sum_{k=2}^nT_iT_i^*)+e_nT_1^*+T_1e_n]^{ab}=i^{ab}([(1, v_0)]^{ab})=0\in \Gamma_{2n+1}^{ab}.\]
    There exist $N\in\mathbb{N}$ and $x_i, y_i\in [[R_{\{1, \cdots, N\}}\times\mathcal{G}_{E_n}]]$ satisfying
    \[\tilde{U}:=((1, 1)\times U)\sqcup \bigsqcup_{i=2}^N((i, i)\times \partial E_n)=\Pi_i[x_i, y_i]\in [[R_{\{1, \cdots, N\}}\times\mathcal{G}_{E_n}]]'.\]
    We take disjoint cylinder sets $\bigsqcup_{i=1}^NZ(\mu_i)\subset \partial E_n$.
    Consider a bisection 
    \[W:=\bigsqcup_{i=1}^N ((1, i)\times Z(Z(\mu_i), |\mu_i|, 0, \partial E_n)) \subset R_{\{1,\cdots, N\}}\times \mathcal{G}_{E_n}.\]
    Then, we have
    \begin{enumerate}
        \item $s(W):=\bigsqcup_{i=1}^N\{i\}\times \partial{E_n}$, $r(W):=\{1\}\times (\bigsqcup_{i=1}^N Z(\mu_i))$,
        \item $W^{-1}W=\bigsqcup_{i=1}^N((i, i)\times\partial E_n)$ is the unit of  $[[R_{\{1,\cdots, N\}}\times\mathcal{G}_{E_n}]]$,
        \item The permutation $(\mu_11, \mu_1)\in\mathfrak{S}_{E^f_{2n+1}}$ appears in
        \[W\tilde{U}W^{-1}\sqcup Z((\bigsqcup_{i=1}^NZ(\mu_i))^c, 0, 0, (\bigsqcup_{i=1}^NZ(\mu_i))^c )=i((\mu_1 1, \mu_1))\in \Gamma_{2n+1}.\]
    \end{enumerate}
    The direct computation yields
    \begin{align*}
        &i((\mu_11, \mu_1))\\=&W\tilde{U}W^{-1}\sqcup Z((\bigsqcup_{i=1}^NZ(\mu_i))^c, 0, 0, (\bigsqcup_{i=1}^NZ(\mu_i))^c )\\
        =&W\Pi_i[x_i, y_i]W^{-1}\sqcup Z((\bigsqcup_{i=1}^NZ(\mu_i))^c, 0, 0, (\bigsqcup_{i=1}^NZ(\mu_i))^c )\\
=&\Pi_i[Wx_iW^{-1}, Wy_iW^{-1}]\sqcup Z((\bigsqcup_{i=1}^NZ(\mu_i))^c, 0, 0, (\bigsqcup_{i=1}^NZ(\mu_i))^c )\\
=&\Pi_i[(Wx_iW^{-1}\sqcup Z((\bigsqcup_{i=1}^NZ(\mu_i))^c, 0, 0, (\bigsqcup_{i=1}^NZ(\mu_i))^c )), (Wy_iW^{-1}\sqcup Z((\bigsqcup_{i=1}^NZ(\mu_i))^c, 0, 0, (\bigsqcup_{i=1}^NZ(\mu_i))^c ))]\\
\in &\Gamma_{2n+1}'.
    \end{align*}
    Thus, we have $i^{ab}([(1, v_0)]^{ab})=i^{ab}([(\mu_11, \mu_1)]^{ab})=[i((\mu_11, \mu_1))]^{ab}=0$ and $i^{ab}$ is zero.
    So, Lemma \ref{ex} shows $\pi^{ab} \colon \Gamma_{2n+1}^{ab}\cong V_{2n+1}^{ab}$.

    Next, we show  statement 2.
By the universality of $\mathcal{E}_n$,
there is a unital $*$-homomorphism 
\[\alpha \colon \mathcal{E}_n\ni T_i\mapsto S_i\in \mathcal{O}_{2n+1}\]
which is injective because $\alpha (e_n)=\sum_{i=n+1}^{2n+1}S_iS_i^*\not=0$.
Thus, we get an embedding $\alpha \colon \Gamma_n\to V_{2n+1}$ sending $T_1T_2^*+T_2T_1^*+\sum^n_{i=3}T_iT_i^*+e_n$ to the element $g_0=S_1S_2^*+S_2S_1^*+\sum_{i=3}^{2n+1}S_iS_i^*$.
Thus, Theorem \ref{AH} shows the map $\Gamma_n\xrightarrow{\alpha} V_{2n+1}\to V_{2n+1}^{ab}=\mathbb{Z}/2\mathbb{Z}$ is surjective.

 Next, we show statement 3.
Since $\Gamma_\infty=\bigcup_{n=2}^\infty\Gamma_n$,
it is enough to show the natural map $\Gamma_n^{ab}\to \Gamma_{2n+1}^{ab}$ is isomorphism for every $n\in \mathbb{N}$.
By the above argument on $\alpha$ and $\pi^{ab} \colon \Gamma_{2n+1}^{ab}\cong V_{2n+1}^{ab}$,
the commutative diagram
\[\xymatrix{
\Gamma_n^{ab}\ar[r]\ar[dr]^{\alpha}&\Gamma_{2n+1}^{ab}\ar[d]^{\pi^{ab}}\\
&V_{2n+1}^{ab}
}\]
proves the statement. 

\end{proof}
\begin{lem}\label{io}
If $g\in\Gamma_n$ commutes with every $\mathfrak{S}_{E^f_n}$,
then $g=e$.
\end{lem}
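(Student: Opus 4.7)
The plan is to exploit the Fock representation $\Gamma_n \subset U(\mathcal{E}_n) \subset \mathbb{B}(\ell^2(E^f_n))$ recalled at the start of Section \ref{main1}, under which every element acts on the canonical basis by permutation: $g(\delta_\mu) = \delta_{g(\mu)}$ for $\mu \in E^f_n$. From the explicit presentation $g = \sum_i T_{\mu_i} T_{\nu_i}^* + \sum_k T_{v_k} e_n T_{w_k}^*$ of Lemma \ref{tabl}, one reads off directly that $g$ sends $w_k$ to $v_k$ and $\nu_i \xi$ to $\mu_i \xi$, so that the restriction of the $\partial E_n$-action to $E^f_n$ is a bijection of this countable set. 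Under this identification, $\mathfrak{S}_{E^f_n} \subset \Gamma_n$ corresponds precisely to the finitely supported permutations of $E^f_n$.

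Now I would suppose $g \in \Gamma_n$ commutes with every element of $\mathfrak{S}_{E^f_n}$, and test this against transpositions. For any pair $\mu \neq \nu$ in $E^f_n$, write $\tau_{\mu,\nu}$ for the corresponding transposition. Then the conjugation formula $g \tau_{\mu,\nu} g^{-1} = \tau_{g(\mu), g(\nu)}$ forces $\{g(\mu), g(\nu)\} = \{\mu, \nu\}$. Because $E^f_n$ is infinite, this pairwise constraint rigidifies $g$: if $g(\mu_0) = \nu_0 \neq \mu_0$, then picking any $\mu_1 \in E^f_n \setminus \{\mu_0, \nu_0\}$ yields the contradiction $g(\mu_0) \in \{\mu_0, \mu_1\}$ while $g(\mu_0) = \nu_0$. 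Hence $g$ must fix $E^f_n$ pointwise.

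To finish, I would invoke the density of $E^f_n$ in $\partial E_n$ recorded in Remark \ref{r1}, together with the fact that $g$ acts continuously on $\partial E_n$. A continuous self-map of a Hausdorff space that fixes a dense subset is the identity, so $g$ acts as the identity homeomorphism on $\partial E_n$. Because $\mathcal{G}_{E_n}$ is topologically principal (cf.\ Lemma \ref{matui}), the inclusion $\Gamma_n \hookrightarrow \operatorname{Homeo}(\partial E_n)$ is injective, and therefore $g = e$ in $\Gamma_n$. There is no real obstacle here; the argument is essentially the elementary observation that the centralizer of the full finite symmetric group inside any permutation group of an infinite set is trivial, combined with the already-established dictionary between $\Gamma_n$ and $\operatorname{Homeo}(\partial E_n)$.
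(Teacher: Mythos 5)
Your proof is correct and follows essentially the same route as the paper's: both reduce the claim to the elementary fact that conjugating (or commuting with) transpositions in $\mathfrak{S}_{E^f_n}$ detects the permutation that $g$ induces on $E^f_n$. The only cosmetic difference is the last step: you pass from $g|_{E^f_n}=\mathrm{id}$ to $g=e$ via density of $E^f_n$ in $\partial E_n$ and faithfulness of $\Gamma_n\hookrightarrow\operatorname{Homeo}(\partial E_n)$, whereas the paper first deduces $\mu_i=\nu_i$ from its chosen presentation, places $g$ in $\operatorname{Ker}\pi=\mathfrak{S}_{E^f_n}$, and then uses that the finitary symmetric group of an infinite set has trivial centre.
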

\begin{proof}
Fix a representation
\[g=\sum_{i=1}^MT_{\mu_i}T_{\nu_i}^*+\sum_{k=1}^NT_{v_k}e_n T_{w_k}^*.\]
Assume $\mu_1\not=\nu_1$.
If $v_k=w_k$ for $k=1, \cdots, N$,
one has  $h_k :=(w_k, \nu_1)\in\mathfrak{S}_{E^f_n}$ and
\[gh_k \colon w_k\mapsto \mu_1,\quad h_kg \colon w_k\mapsto \nu_1\not=\mu_1.\]
If $v_1\not=w_1$,
one has
\[gh_1 \colon w_1\mapsto \mu_1,\quad h_1g \colon w_1\mapsto v_1\not=\mu_1\; (\text{see Lem.} \;\ref{tabl}).\]
In both cases, $g$ does not satisfy the assumption, and one has $\mu_i=\nu_i$ for $i=1,\cdots, M$.

If $\mu_i=\nu_i$ for $i=1,\cdots, M$,
one has $g\in\operatorname{Ker}\pi=\mathfrak{S}_{E^f_n}$ and the assumption implies $g=e$.
\end{proof}
\begin{proof}[{Proof of Thm. \ref{ns}}]
    First, we consider the normal subgroup $N$ of $\Gamma_{2n}$.
    By the simplicity of $V_{2n}$,
    either $\pi(N)=V_{2n}$ or $N\subset \mathfrak{S}_{E^f_{2n}}$ holds.
    In the latter case, $N$ must be $\mathfrak{S}_{E^f_{2n}}$ or $\mathfrak{S}_{E^f_{2n}}'$.
    For $N$ with $\pi(N)=V_{2n}$,
    one has $N\cdot \mathfrak{S}_{E^f_{2n}}=\Gamma_{2n}$.
    
    If $N\cap \mathfrak{S}_{E^f_{2n}}=\{e\}$,
$\Gamma_{2n}\cong \mathfrak{S}_{E^f_{2n}}\times N$ holds and this is a contradiction by Lem. \ref{io}.

If $N\cap\mathfrak{S}_{E^f_{2n}}=\mathfrak{S}_{E^f_{2n}}$,
one has $N=N\cdot \mathfrak{S}_{E^f_{2n}}=\Gamma_{2n}$.

If $N\cap \mathfrak{S}_{E^f_{2n}}=\mathfrak{S}_{E^f_{2n}}'$,
the surjective map 
\[\mathbb{Z}/2\mathbb{Z}=\mathfrak{S}_{E^f_{2n}}/\mathfrak{S}_{E^f_{2n}}'\to \Gamma_{2n}/N=(\mathfrak{S}_{E^f_{2n}}\cdot N)/N\]
is injective, and one has $N=\Gamma_{2n}'$ by Prop. \ref{abg}.
So, the normal subgroup $N\subset \Gamma_{2n}$ is either $\Gamma_{2n}', \mathfrak{S}_{E^f_{2n}}, \mathfrak{S}_{E^f_{2n}}'$.

Next, we consider the normal subgroup $N$ in $\Gamma_{2n+1}$.
By Prop. \ref{abg},
one has $\mathfrak{S}_{E^f_{2n+1}}\subset \Gamma_{2n+1}'$.
The normal subgroup of $V_{2n+1}$ is either $\{e\}, V_{2n+1}', V_{2n+1}$,
and if $\pi(N)=\{e\}$, then $N=\mathfrak{S}_{E^f_{2n+1}}$ or $\mathfrak{S}_{E^f_{2n+1}}'$.

Consider the case $\pi(N)=V_{2n+1}$ (i.e., $N\cdot\mathfrak{S}_{E^f_{2n+1}}=\Gamma_{2n+1}$).

If $N\cap \mathfrak{S}_{E^f_{2n+1}}=\{e\}$,
Lem. \ref{io} and $\Gamma_{2n+1}\cong N\times\mathfrak{S}_{E^f_{2n+1}}$ give a contradiction.

If $N\cap \mathfrak{S}_{E^f_{2n+1}}=\mathfrak{S}_{E^f_{2n+1}}'$,
the surjection
\[\mathfrak{S}_{E^f_{2n+1}}/\mathfrak{S}'_{E^f_{2n+1}}\to (N\cdot \mathfrak{S}_{E^f_{2n+1}})/N=\Gamma_{2n+1}/N\]
is injective.
However, this implies $N=\Gamma_{2n+1}'\supset\mathfrak{S}_{E^f_n}$ by Prop. \ref{abg} and makes a contradiction.
Thus, one has $N\cap \mathfrak{S}_{E^f_{2n+1}}=\mathfrak{S}_{E^f_{2n+1}}$ and 
$N=N\cdot \mathfrak{S}_{E^f_{2n+1}}=\Gamma_{2n+1}$.

Consider the case $\pi(N)=V_{2n+1}'$ where one has
\[N\subset \Gamma_{2n+1}'\cdot\mathfrak{S}_{E^f_{2n+1}}\subset\Gamma_{2n+1}'\subset N\cdot\mathfrak{S}_{E^f_{2n+1}}, \quad N\cdot\mathfrak{S}_{E^f_{2n+1}}=\Gamma_{2n+1}'.\]

If $N\cap \mathfrak{S}_{E^f_{2n+1}}=\{e\}$,
Lem. \ref{io} and $\Gamma_{2n+1}'\cong N\times\mathfrak{S}_{E^f_{2n+1}}$ give a contradiction.

If $N\cap \mathfrak{S}_{E^f_{2n+1}}=\mathfrak{S}_{E^f_{2n+1}}$,
one has $N=\Gamma_{2n+1}'$.

If $N\cap\mathfrak{S}_{E^f_{2n+1}}=\mathfrak{S}_{E^f_{2n+1}}'$,
the surjection
\[ \langle [(1, v_0)]^{ab} \rangle=\mathbb{Z}/2\mathbb{Z}=\mathfrak{S}_{E^f_{2n+1}}/\mathfrak{S}_{E^f_{2n+1}}'\to N\cdot \mathfrak{S}_{E^f_{2n+1}}/N=\Gamma_{2n+1}'/N\]
is injective and this implies $[U]^{ab}=[(1, v_0)]^{ab}\not=e\in (\Gamma_{2n+1}')^{ab}$.
Recall the notation $U\in \Gamma_{2n+1}, \;\;\tilde{U}\in [[R_\mathbb{N}\times\mathcal{G}_{E_n}]]$ used in Prop. \ref{abg}
\[(1, v_0)=U:=(\sum_{i=1}^{2n+1}T_{1i}T_{1i}^*)+(\sum_{i=2}^{2n+1}T_iT_i^*)+e_{2n+1}T_1^*+T_1e_{2n+1}\in\mathfrak{S}_{E^f_{2n+1}}\subset\Gamma_{2n+1}'.\]

By \cite[Cor. 6.10.]{XinLi},
    we have 
    \[\Gamma_{2n+1}'\ni U\mapsto [\tilde{U}]^{ab}=0\in ([[R_\mathbb{N}\times \mathcal{G}_{E_{2n+1}}]]')^{ab}=0.\]
    The same argument as in the proof of Prop. \ref{abg}, 1. shows $[U]^{ab}=e\in (\Gamma_{2n+1}')^{ab}$.
    This is a contradiction and $N\cap \mathfrak{S}_{E^f_{2n+1}}$ must be $\mathfrak{S}_{E^f_{2n+1}}$.

Thus, the normal subgroup $N$ is either $\Gamma_{2n+1}', \mathfrak{S}_{E^f_{2n+1}}, \mathfrak{S}_{E^f_{2n+1}}'$.

The same argument shows that the normal subgroup of $\Gamma_n'$ is either $\mathfrak{S}_{E^f_n}$, $\mathfrak{S}_{E^f_n}'$.
\end{proof}


\subsection{$\gamma_{c^f_n}$-KMS states of $C(\partial O_n) \rtimes _r V_n$}
We write $m \colon C(\partial O_n)\ni f\mapsto \int_{\partial O_n} f(x) dm(x)\in \mathbb{C}$ where $m$ is the product measure $\bigotimes_{i=1}^\infty(\sum_{j=1}^n\frac{1}{n}\delta_j)$ (i.e., $m$ is the composition $C(\partial O_n)\subset M_{n^\infty}\xrightarrow{\text{trace}}\mathbb{C}$),
and let
\[E \colon C(\partial O_n)\rtimes_r V_n\ni f\lambda_g\mapsto \delta_{e, g} f\in C(\partial O_n)\] the canonical conditional expectation.

\begin{thm}\label{M}
For $C(\partial O_n)\rtimes_r V_n$,
there is a $\gamma_{c^f_n}\text{-KMS}_\beta$-state if and only if $\beta={\operatorname{log} n}$,
and the KMS state is unique, which is given by
\[\psi \colon C(\partial O_n)\rtimes_r V_n\xrightarrow{E} C(\partial O_n)\xrightarrow{m}\mathbb{C}.\]
\end{thm}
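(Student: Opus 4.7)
The plan is to lift any $\gamma_{c_n^f}$-KMS$_\beta$-state from the reduced crossed product $C(\partial O_n) \rtimes_r V_n$ to the full crossed product $C(\partial O_n) \rtimes V_n$ via the canonical quotient, then apply Neshveyev's theorem (\cite[Thm.~1.3]{Nes}) to $C^*(\partial O_n \rtimes V_n)$. That theorem parameterizes $\gamma_{c_n^f}$-KMS$_\beta$-states by pairs $(m, \{\tau_x\})$, where $m$ is a quasi-invariant probability measure on $\partial O_n$ with Radon--Nikodym cocycle $e^{-\beta c_n^f}$, and $\{\tau_x\}$ is a measurable field of tracial states on $C^*((V_n)_x)$ compatible under conjugation along the $V_n$-orbits. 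The task is to pin down both ingredients uniquely.

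First I would determine the measure. Because the $V_n$-orbits on $\partial O_n$ coincide with the $\mathcal{G}_{O_n}$-orbits (any two tail-equivalent points are interchanged by a suitable element of $V_n$ obtained via Lemma \ref{lift}), the Radon--Nikodym condition for $c_n^f$ reduces to the classical one for $c_n$. Evaluating quasi-invariance on bisection pieces of the form $Z(Z^\infty(\nu), |\nu|, |\mu|, Z^\infty(\mu))$ realized inside $\partial O_n \rtimes V_n$ yields $m(Z^\infty(\nu))/m(Z^\infty(\mu)) = e^{-\beta(|\nu|-|\mu|)}$, hence $m(Z^\infty(\mu)) = e^{-\beta|\mu|}$ for every finite word $\mu$. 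The normalization $\sum_{i=1}^n m(Z^\infty(i)) = 1$ then forces $\beta = \log n$ and identifies $m$ as the Bernoulli product $\bigotimes_{k=1}^\infty \bigl(\sum_{j=1}^n \tfrac{1}{n}\delta_j\bigr)$, simultaneously ruling out KMS$_\beta$-states for other values of $\beta$.

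Next I would show that the trace field is canonical. For $m$-a.e.\ $x$ (equivalently, any non-eventually-periodic $x$), an element $g \in V_n$ can fix $x$ only by acting as the identity on some cylinder $Z^\infty(\mu)$ with $\mu$ a prefix of $x$, so $(V_n)_x$ is the directed union $\bigcup_\mu \Rist{V_n}{Z^\infty(\mu)^c}$. Each rigid stabilizer $\Rist{V_n}{Z^\infty(\mu)^c}$ is isomorphic to a copy of $V_n$ (the group of $V_n$-type rearrangements of the clopen complement of $Z^\infty(\mu)$), and so is $\mathrm{C}^*$-simple by \cite{LBMB} and has the unique trace property. Since this property is preserved under ascending unions of subgroups, the only trace on $C^*((V_n)_x)$ is the canonical delta trace $h \mapsto \delta_{h,e}$. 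Plugging this constant trace field into Neshveyev's integral formula recovers precisely $\psi = m \circ E$, and a direct calculation confirms that $m \circ E$ does satisfy the $\gamma_{c_n^f}$-KMS$_{\log n}$ condition, using that $E$ is the canonical conditional expectation and that $m$ is Bernoulli with the correct Radon--Nikodym cocycle.

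For the nonexistence of a $\gamma_{c_n^f}$-ground state I would adapt the Olesen--Pedersen argument. A ground state $\psi$ annihilates $ab$ whenever $b$ is an analytic eigenvector of negative cocycle. For each word $\nu$ with $|\nu| \geq 2$, choose $\mu$ with $|\mu| < |\nu|$ and $Z^\infty(\mu) \cap Z^\infty(\nu) = \emptyset$ and use Lemma \ref{lift}(2) to realize the bisection piece $U = Z(Z^\infty(\nu), |\nu|, |\mu|, Z^\infty(\mu))$ inside a $V_n$-element; taking $a = 1_U$ and $b = 1_U^*$ then forces $\psi(1_{Z^\infty(\nu)}) = \psi(1_U 1_U^*) = 0$. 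Refining each length-one cylinder into length-two cylinders gives $\psi(1_{Z^\infty(i)}) = 0$ for every $i$, contradicting $\sum_{i=1}^n \psi(1_{Z^\infty(i)}) = \psi(1) = 1$. The main obstacle will be the trace-field step: carefully checking that the $\mathrm{C}^*$-simplicity of each rigid stabilizer truly propagates through the ascending union and through Neshveyev's compatibility condition to pin down $\tau_x$ as the canonical trace almost everywhere, and ensuring measurability of the resulting trace field.
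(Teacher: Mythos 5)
Your overall strategy coincides with the paper's: pass to the full crossed product, invoke Neshveyev's parameterization of KMS$_\beta$-states by a quasi-invariant measure together with a measurable field of traces on the isotropy groups, pin the measure down to the Bernoulli measure (forcing $\beta=\log n$), and kill the trace field using the unique trace property of $(V_n)_x$ for non-eventually-periodic $x$, written as an increasing union of copies of $V_n$ (the paper's $\operatorname{Fix}_{V_n}(Z^\infty(x_1\cdots x_k))$ are exactly your rigid stabilizers $\Rist{V_n}{Z^\infty(x_1\cdots x_k)^c}$). Your observation that the unique trace property passes to increasing unions is correct, and slightly more direct than the paper's route through amenable radicals: a tracial state on $C^*_r\bigl(\bigcup_k G_k\bigr)$ restricts to a tracial state on each $C^*_r(G_k)$, hence is canonical on every $\lambda_g$.

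There is, however, one genuine gap, sitting exactly where you flag "the main obstacle". Neshveyev's theorem for the full crossed product produces traces $\tilde\tau_x$ on the \emph{full} group C*-algebras $C^*((V_n)_x)$, whereas the unique trace property only controls traces on the \emph{reduced} algebras $C^*_r((V_n)_x)$. These are very different: a group can have the unique trace property and still carry a large simplex of tracial states on its full group C*-algebra (any positive-definite class function gives one), so "the only trace on $C^*((V_n)_x)$ is the canonical delta trace" is false as written. The missing step is to use the hypothesis that the state factors through the reduced crossed product: by Christensen--Neshveyev (\cite[Prop. 2.10, 3.1]{CNes}) this forces $\tilde\tau_x$ to factor through $C^*_r((V_n)_x)$ for $m$-a.e.\ $x$, and only then does the unique trace property finish the argument; this is exactly how the paper closes it. Two smaller points: in your ground-state computation the roles of $a$ and $b$ are swapped --- the element of \emph{negative} degree, i.e.\ $1_U^*$, must be the one acted on by $\gamma(z)$ --- but the conclusion $\psi(1_{Z^\infty(\nu)})=0$ for $|\nu|\ge 2$ and the resulting contradiction with $\psi(1)=1$ are sound; the paper instead deduces nonexistence of ground states from the emptiness of the boundary set of the cocycle via \cite[Thm. 1.4.]{LLNes}, so your argument is an acceptable elementary substitute.
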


Note that for each $x \in \partial O_n \setminus \{\alpha\nu\nu\nu\cdots\in\partial O_n\;|\; \alpha, \nu\in\{1, \cdots, n\}^*\}$, one has $\displaystyle (V_n)_x = \bigcup _{k=1} \operatorname{Fix}_{V_n}(Z^{\infty}(x_1 \cdots x_k))$ since $x$ is not eventually periodic.

\begin{prop}\label{uniquetr}
    For each $\mu \in \{ 1, 2, \ldots n \} ^*$, the action $\operatorname{Fix}_{V_n}(Z^{\infty}(\mu)) \curvearrowright \partial O_n \setminus Z^{\infty}(\mu)$ is a faithful boundary action.
    In particular, each $\operatorname{Fix}_{V_n}(Z^{\infty}(\mu))$ has the unique trace property.
\end{prop}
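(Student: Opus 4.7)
The plan is to verify faithfulness, minimality, and strong proximality of the action $\operatorname{Fix}_{V_n}(Z^\infty(\mu)) \curvearrowright \partial O_n \setminus Z^\infty(\mu)$; the unique trace property will then follow at once from the implication $(2) \Rightarrow (1)$ of the cited theorem of \cite{BKKO}. Faithfulness is immediate: any element of the fixator that acts trivially on the complement coincides with the identity on all of $\partial O_n$, hence equals $e$ under the inclusion $V_n \hookrightarrow \Homeo(\partial O_n)$.

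For minimality, given $x \in \partial O_n \setminus Z^\infty(\mu)$ and a nonempty open subset $V$ of the complement, I shrink $V$ to a cylinder $Z^\infty(\tau) \subset V$ disjoint from $Z^\infty(\mu)$, and choose $Z^\infty(\eta) \ni x$ disjoint from both $Z^\infty(\mu)$ and $Z^\infty(\tau)$. Applying Lemma \ref{lift}(2) so that the identity part of the full partition contains $Z^\infty(\mu)$ produces the $V_n$-element
\[
S_\eta S_\tau^* + S_\tau S_\eta^* + S_\mu S_\mu^* + \sum_i S_{\sigma_i} S_{\sigma_i}^* \in \operatorname{Fix}_{V_n}(Z^\infty(\mu)),
\]
which carries $x$ into $Z^\infty(\tau) \subset V$.

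For strong proximality, it suffices to show that for every $\rho \in \operatorname{Prob}(\partial O_n \setminus Z^\infty(\mu))$, every target $x$ in the complement, and every $\varepsilon > 0$, one can find $g \in \operatorname{Fix}_{V_n}(Z^\infty(\mu))$ together with a cylinder $Z^\infty(\tau) \ni x$ of diameter $< \varepsilon$ satisfying $g\rho(Z^\infty(\tau)) \geq 1 - \varepsilon$; refining $\tau$ along longer and longer prefixes of $x$ then forces $g_k \rho \to \delta_x$. I pick such a $\tau$ and decompose the remainder $(\partial O_n \setminus Z^\infty(\mu)) \setminus Z^\infty(\tau) = \bigsqcup_{j=1}^N Z^\infty(\nu_j)$. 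A pigeonhole argument on the $n^M$ depth-$M$ sub-cylinders of $Z^\infty(\tau)$ shows that for $M$ sufficiently large there exist $N$ such sub-cylinders $Z^\infty(\tau \eta_1), \ldots, Z^\infty(\tau \eta_N)$, none containing $x$, whose total $\rho$-mass is at most $\varepsilon$ (at most $N/\varepsilon$ sub-cylinders can have mass $\geq \varepsilon/N$). The desired element is the full bisection
\[
g := \sum_{j=1}^N \bigl( S_{\tau \eta_j} S_{\nu_j}^* + S_{\nu_j} S_{\tau \eta_j}^* \bigr) + S_\mu S_\mu^* + \sum_{\sigma} S_\sigma S_\sigma^*,
\]
where $\sigma$ ranges over the remaining depth-$M$ sub-cylinders of $Z^\infty(\tau)$. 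A direct computation, using that $g$ is an involution swapping $\bigsqcup_j Z^\infty(\nu_j)$ with $\bigsqcup_j Z^\infty(\tau \eta_j)$ and fixing everything else, gives $g\rho(Z^\infty(\tau)) = 1 - \rho\bigl(\bigsqcup_j Z^\infty(\tau \eta_j)\bigr) \geq 1 - \varepsilon$. The main obstacle is packaging this ``swap-and-fix'' as a genuine $V_n$-bisection that simultaneously fixes $Z^\infty(\mu)$ pointwise and concentrates enough $\rho$-mass into the shrinking cylinder near $x$; once the partition bookkeeping is in place, the pigeonhole estimate for the $\eta_j$'s is routine.
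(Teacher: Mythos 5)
Your proof is correct, but it takes a genuinely different route from the paper's. You verify faithfulness, minimality and strong proximality of $\operatorname{Fix}_{V_n}(Z^{\infty}(\mu)) \curvearrowright \partial O_n \setminus Z^{\infty}(\mu)$ directly, by building explicit swap elements inside the fixator: the pigeonhole selection of $N$ depth-$M$ sub-cylinders of $Z^{\infty}(\tau)$ of total $\rho$-mass at most $\varepsilon$, together with the involution exchanging them with the $Z^{\infty}(\nu_j)$'s, does give $g\rho(Z^{\infty}(\tau)) \geq 1-\varepsilon$, and iterating along prefixes of $x$ puts $\delta_x$ in the weak-$*$ closure of the orbit. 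The paper instead argues by conjugation: it chooses $g \in V_n$ with $g(Z^{\infty}(\mu)) = Z^{\infty}(1) \sqcup \cdots \sqcup Z^{\infty}(n-1)$ and identifies the resulting action with the canonical action $V_n \curvearrowright \partial O_n$, which is quoted as a known boundary action. Your approach is longer but self-contained (it reproves that known fact in the relative setting rather than citing it), and it incidentally sidesteps a defect in the paper's reduction: in $H_0(\mathcal{G}_{O_n}) = \mathbb{Z}/(n-1)\mathbb{Z}$ a single cylinder has class $1$ while $Z^{\infty}(1) \sqcup \cdots \sqcup Z^{\infty}(n-1)$ has class $n-1 = 0$, so the asserted $g$ exists only for $n=2$; the paper's reduction is repairable (send $Z^{\infty}(\mu)$ to a single cylinder), but then the target action is of Higman--Thompson type $V_{n,n-1}$ rather than literally $V_n \curvearrowright \partial O_n$. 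The only loose ends in your write-up are trivial edge cases you should dispatch in a line: in the minimality step allow $x \in Z^{\infty}(\tau)$ (take $g=e$), and in the proximality step allow $\rho$ already supported in $Z^{\infty}(\tau)$.
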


\begin{proof}
    There exists $g \in V_n$ such that $g(Z^{\infty}(\mu)) = Z^{\infty}(1) \sqcup \cdots \sqcup Z^{\infty}(n-1)$.
    Then, taking an adjoint by $g$, one can identify $\operatorname{Fix}_{V_n}(Z^{\infty}(\mu)) \curvearrowright \partial O_n \setminus Z^{\infty}(\mu)$ with $\operatorname{Fix}_{V_n}(Z^{\infty}(1) \sqcup \cdots \sqcup Z^{\infty}(n-1)) \curvearrowright Z^{\infty}(n)$, which is isomorphic to $V_n \curvearrowright \partial O_n$.
\end{proof}

\begin{cor}\label{ari}
    For each $x \in \partial O_n \setminus \{\alpha\nu\nu\nu\cdots\in\partial O_n\;|\; \alpha, \nu\in\{1, \cdots, n\}^*\}$, $(V_n)_x$ has the unique trace property.
\end{cor}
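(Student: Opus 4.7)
The plan is to apply the characterization of the unique trace property recalled from \cite{BKKO}: a discrete group has the unique trace property if and only if its only amenable normal subgroup is trivial. So I would aim to show that $(V_n)_x$ has no non-trivial amenable normal subgroup, using the already-established fact (Prop. \ref{uniquetr}) that each $H_k := \operatorname{Fix}_{V_n}(Z^{\infty}(x_1 \cdots x_k))$ has the unique trace property, i.e., no non-trivial amenable normal subgroup.

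First, I would recall from the remark preceding Cor. \ref{ari} that since $x$ is not eventually periodic, we have the ascending union $(V_n)_x = \bigcup_{k \geq 1} H_k$, where $H_1 \subset H_2 \subset \cdots$ is an increasing chain of subgroups of $(V_n)_x$. Second, suppose for contradiction that $N \lhd (V_n)_x$ is a non-trivial amenable normal subgroup. For each $k$, consider the intersection $N \cap H_k$. Since $H_k \subset (V_n)_x$ and $N$ is normal in $(V_n)_x$, every $h \in H_k$ satisfies $h(N \cap H_k)h^{-1} \subset N \cap H_k$; that is, $N \cap H_k$ is a normal subgroup of $H_k$. Moreover, $N \cap H_k$ is amenable as a subgroup of the amenable group $N$.

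Third, applying Prop. \ref{uniquetr} (via the $\mathrm{C}^*$-simplicity/unique-trace characterization) to $H_k$, the only amenable normal subgroup of $H_k$ is $\{e\}$, so $N \cap H_k = \{e\}$ for every $k$. Taking the union over $k$ gives $N = N \cap \bigcup_k H_k = \bigcup_k (N \cap H_k) = \{e\}$, contradicting the non-triviality of $N$. Hence $(V_n)_x$ admits no non-trivial amenable normal subgroup, and the cited characterization yields the unique trace property.

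I do not expect a serious obstacle here: the proof is essentially a diagonal/union argument combined with Prop. \ref{uniquetr}. The only subtle point worth verifying carefully is that $N \cap H_k$ really is normal in $H_k$ (which follows immediately from $H_k \subset (V_n)_x$ and normality of $N$ in $(V_n)_x$), and that amenability passes to this intersection (which is automatic for subgroups of amenable discrete groups). Both are routine.
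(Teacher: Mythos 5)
Your proposal is correct and follows essentially the same argument as the paper: intersect a putative amenable normal subgroup $N \lhd (V_n)_x$ with each $\operatorname{Fix}_{V_n}(Z^{\infty}(x_1\cdots x_k))$, conclude each intersection is trivial by Prop.~\ref{uniquetr} and the amenable-radical characterization from \cite{BKKO}, and take the union over $k$. No gaps.
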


\begin{proof}
    Let $N$ be a normal amenable subgroup of $(V_n)_x$.
    Since $N \cap \operatorname{Fix}_{V_n}(Z^{\infty}(x_1 \cdots x_k))$ is a normal amenable subgroup of $\operatorname{Fix}_{V_n}(Z^{\infty}(x_1 \cdots x_k))$ and $\operatorname{Fix}_{V_n}(Z^{\infty}(x_1 \cdots x_k))$ has the unique trace property, $N \cap \operatorname{Fix}_{V_n}(Z^{\infty}(x_1 \cdots x_k)) = \{ e \}$.
    Thus $\displaystyle N = \bigcup_{k} N \cap \operatorname{Fix}_{V_n}(Z^{\infty}(x_1 \cdots x_k)) = \{ e \}$ and this implies that $(V_n)_x$ has the unique trace property.
\end{proof}

\begin{proof}[{Proof of Thm. \ref{M}}]
Let $\mu$ be a quasi-invariant measure of $\partial O_n\rtimes V_n$ with Radon-Nykodim cocycle $e^{-\beta c_n}$.
By the definition of topological full group $[[\mathcal{G}_n]]=V_n$,
every local homeomorphism of $\partial O_n$ given by a bisection of $\partial O_n\rtimes V_n$ is a local homeomorphism given by a bisection of $\mathcal{G}_n$.
Thus, \cite{OP, Nes} shows $\beta=\log n$ and $\mu=m$.
So \cite{Nes} shows that $\psi$ is a $c^f_n$-$\mathrm{KMS}_{\log n}$ state and $c^f_n$-KMS$_\beta$ state exists only for $\beta=\log n$.

We will show that every $c^f_n$-KMS$_{\log n}$ state is equal to $\psi$. 
For a $c^f_n$-KMS$_{\log n}$ state $\varphi : C(\partial O_n)\rtimes_r V_n\to \mathbb{C}$,
we write
\[\tilde{\varphi} : C(\partial O_n)\rtimes V_n\to C(\partial O_n)\rtimes_r V_n\xrightarrow{\varphi}\mathbb{C}.\]
By \cite{Nes}, there exists a measurable field $\{m, \{\tilde{\tau}_x\}_{x\in\partial O_n}\}$ such that
\[\tilde{\varphi}=\int_{\partial O_n}\tilde{\tau}_x dm(x),\]
where $\tilde{\tau}_x : C^*((\partial O_n\rtimes V_n)^x_x)\to \mathbb{C}$ is a tracial state.
Since $\tilde{\varphi}$ factor through the reduced groupoid C*-algebra $C(\partial O_n)\rtimes_r V_n$,
\cite[Prop. 2.10, 3.1]{CNes} show that $\tilde{\tau}_x$ is a pull back of a tracial state $\tau_x : C^*_r((V_n)_x)\to\mathbb{C}$ for $m$-a.e. $x\in\partial O_n$.
Since the set of  eventually periodic words $\{\alpha\nu\nu\nu\cdots\in \partial O_n\;|\; \alpha, \nu\in \{1, \cdots, n\}^*\}$ is countable $m$-null set,
we may assume that $\tilde{\tau}_x : C^*((V_n)_x)\to C^*_r((V_n)_x)\xrightarrow{\tau_x}\mathbb{C}$ holds for $m$-a.e. $x\in\partial O_n$ where $x$ is not eventually periodic. 
By Cor. \ref{ari},
each $\tau_x$ must be the canonical trace, and one has
\[\tau_x : C^*_r((V_n)_x)\subset C^*_r(V_n)\xrightarrow{\text{canonical trace}}\mathbb{C},\]
and this implies $\varphi=\psi$.

Since the boundary set (see \cite[p271]{LLNes}) of $\partial O_n$ of the cocycle $c_n$ (and $c^f_n$) is $\emptyset$,
\cite[Thm. 1.4.]{LLNes} shows that there are no $\gamma_{c^f_n}$-ground states.
\end{proof}

\begin{remark}
    In contrast to the case of $\mathcal{O}_n$ and the groupoid C*-algebras with trivial isotropy,
    the fixed point algebra $(C(\partial O_n) \rtimes _r V_n)^{\mathbb{T}}$ seems to be complicated, and the isotropy of the groupoid $\partial O_n\rtimes V_n$ is large (almost the same as $V_n$).
Thus, one can not simply apply the previous results on the KMS-states of groupoid C*-algebras and the key ingredients in the above theorem are the results \cite{LBMB} and \cite{BKKO} on the C*-simplicity and unique trace property.
\end{remark}


\subsection{$\gamma_{d^f_\infty}$-Ground states of $C(\partial E_\infty)\rtimes_r \Gamma_\infty$}
\begin{thm}\label{MM}
    A state $\psi \colon C(\partial E_\infty)\rtimes_r\Gamma_\infty\to\mathbb{C}$ is a ground state for the $\mathbb{R}$-action $\gamma_{d^f_\infty}$ if and only if $\psi$ is given by
    \[\psi \colon C(\partial E_\infty)\rtimes_r\Gamma_\infty\xrightarrow{E}C(\partial E_\infty)\rtimes_r (\Gamma_\infty)_{v_0}\xrightarrow{ev_{v_0}}C^*_r((\Gamma_\infty)_{v_0})\xrightarrow{\varphi}\mathbb{C},\]
    \[\psi (f\lambda_g)=f(v_0)\varphi (1_{(\Gamma_\infty)_{v_0}}(g)\lambda_g),\]
    where $(\Gamma_\infty)_{v_0}:=\{g\in\Gamma_\infty\;|\; g(v_0)=v_0\}$ is the stabilizer of $v_0$, $E(f\lambda_g)=1_{(\Gamma_\infty)_{v_0}}(g)f\lambda_g$ is the conditional expectation, and $\varphi$ is a state of $C^*_r((\Gamma_\infty)_{v_0})$.
\end{thm}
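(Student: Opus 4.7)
For sufficiency, I would verify that $\psi:=\varphi\circ ev_{v_0}\circ E$ is a state, is $\gamma_{d^f_\infty}$-invariant, and satisfies the ground-state bound. The state property holds because $E$ is the standard conditional expectation for the open subgroup $(\Gamma_\infty)_{v_0}\leq\Gamma_\infty$, and $ev_{v_0}$ extends to a $*$-homomorphism $C(\partial E_\infty)\rtimes_r(\Gamma_\infty)_{v_0}\to C^*_r((\Gamma_\infty)_{v_0})$ because $\{v_0\}$ is a fixed point of the trivial sub-action. Invariance follows because the only element of $\mathcal{G}_{E_\infty}$ of the form $(v_0,k,v_0)$ has $k=0$ (the shift $\sigma_{E_\infty}$ is undefined at $v_0$), so $d^f_\infty$ vanishes on the image of $ev_{v_0}\circ E$. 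For the ground-state bound, by density of analytic elements it suffices to verify $\psi(ba)=0$ for $a=1_W\lambda_g$ of constant cocycle $n<0$; but $ev_{v_0}(E(ba))$ is nonzero only if $v_0$ lies in the source $g^{-1}(W)$ of $a$, and then the cocycle at $v_0$ equals $|g(v_0)|\geq 0$, contradicting $n<0$.

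For necessity, let $\psi$ be a ground state and set $\mu:=\psi|_{C(\partial E_\infty)}$. The ground-state condition applied with $a=c^*$ of constant negative cocycle and $b=c$ gives $\psi(cc^*)=0$ for every $c=1_W\lambda_g$ of constant positive cocycle, whence $\mu(W)=0$ for every such range $W$. Choosing $c\in\Gamma_\infty$ with bisection piece $Z(\mu')\to Z(\nu)$ for $\mu'$ a proper prefix of $\nu$ (which one completes to a full bisection using comparison in $\mathcal{G}_{E_\infty}$) shows $\mu(Z(\nu))=0$ for $|\nu|\geq 2$. For $|\nu|=1$, taking instead $c\in\Gamma_n$ whose piece $T_\nu e_n$ has cocycle $1$ (source $e_n=\{v_0\}\cup\bigcup_{j>n}Z(j)$, range $\{\nu\}\cup\bigcup_{j>n}Z(\nu j)$) yields $\mu(\{\nu\}\cup\bigcup_{j>n}Z(\nu j))=0$ for every $n$; combined with the previous case and the decomposition $Z(\nu)=\{\nu\}\sqcup\bigsqcup_j Z(\nu j)$ this forces $\mu(Z(\nu))=0$. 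Countable additivity on $\partial E_\infty=\{v_0\}\sqcup\bigsqcup_i Z(i)$ then gives $\mu=\delta_{v_0}$.

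With $\mu=\delta_{v_0}$, $\gamma$-invariance reduces $\psi(f\lambda_g)$ to $\psi(f\cdot 1_{W_0}\lambda_g)$, where $W_0$ is the constant-cocycle-$0$ piece of $g$'s bisection. If $g\notin(\Gamma_\infty)_{v_0}$, then $v_0\notin W_0$ since the cocycle at $v_0$ equals $|g(v_0)|>0$, and Cauchy--Schwarz gives $|\psi(f\cdot 1_{W_0}\lambda_g)|^2\leq\psi(|f|^2\, 1_{W_0})\cdot\psi(1)=|f(v_0)|^2\cdot 1_{W_0}(v_0)=0$. If $g\in(\Gamma_\infty)_{v_0}$, applying the same inequality to $(f-f(v_0))\cdot 1_{W_0}\lambda_g$ (with $v_0\in W_0$) yields $\psi(f\lambda_g)=f(v_0)\psi(\lambda_g)$. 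Setting $\varphi:=\psi|_{C^*_r((\Gamma_\infty)_{v_0})}$, a state as the restriction of a state, gives the claimed formula, and the map $\varphi\mapsto\psi$ is bijective since $\varphi$ is recovered by this restriction.

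The principal obstacle is the measure-concentration step: unlike the Cuntz--Toeplitz case where $\{v_0\}$ is isolated in $\partial E_n$, here $\{v_0\}$ is not open in $\partial E_\infty$, so no single bisection piece has source exactly $\{v_0\}$. One must instead proceed indirectly through the clopen neighborhoods $\{v_0\}\cup\bigcup_{j>n}Z(j)$ arising from the compact projections $e_n\in C(\partial E_\infty)$, together with countable additivity over $\partial E_\infty=\{v_0\}\sqcup\bigsqcup_i Z(i)$, rather than testing at $\{v_0\}$ directly. A subsidiary technicality is the existence of the completing bisections of $\mathcal{G}_{E_\infty}$, which relies on the comparison and pure infiniteness properties of the groupoid.
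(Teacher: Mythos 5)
Your proof is correct, and the ``if'' direction is essentially the paper's argument: the published proof also exploits that $C(\partial E_\infty)$ lies in the multiplicative domain of $\psi$ (since $\psi|_{C(\partial E_\infty)}=ev_{v_0}$), splits a generic element into its constant-cocycle pieces, observes that only the pieces with $v_0$ in their source survive and these have nonnegative cocycle, and then applies the Phragm\'en--Lindel\"of theorem to pass from boundedness on the upper half-plane to the bound $\|b\|\|a\|$. Your phrasing via ``it suffices that $\psi(ba)=0$ for $a$ of constant negative cocycle'' silently absorbs that Phragm\'en--Lindel\"of step; it is a standard equivalence, but you should be aware that it is exactly where the $\leq\|b\|\|a\|$ estimate comes from. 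The ``only if'' direction is where you genuinely diverge: the paper simply observes that the boundary set of the cocycle $d^f_\infty$ is the singleton $\{v_0\}$ with boundary groupoid $(\Gamma_\infty)_{v_0}$ and invokes the general theorem of Laca--Larsen--Neshveyev \cite[Thm.~1.4]{LLNes} on ground states of groupoid $\mathrm{C}^*$-algebras, whereas you rederive everything by hand: killing the ranges of positive-cocycle bisections, concentrating the measure at $v_0$ through the clopen neighbourhoods $\{v_0\}\cup\bigcup_{j>n}Z(j)$ coming from $e_n$ together with countable additivity (your workaround for $\{v_0\}$ not being open is exactly the right move), and finishing with Cauchy--Schwarz against $\delta_{v_0}$. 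Your route is longer but self-contained and elementary; the paper's is a two-line citation that requires the reader to unwind the definition of the boundary groupoid in \cite{LLNes}. Two small points to tighten: (i) completing a partial bisection such as $Z(\mu')\to Z(\nu)$ to an element of $\Gamma_\infty$ needs equivalence of the complementary clopen sets, not just the subequivalence that ``comparison'' literally provides -- either invoke the Cantor--Schr\"oder--Bernstein argument for purely infinite minimal ample groupoids or, more simply, write the completing unitary explicitly (e.g.\ $T_{11}T_1^*+T_2T_{22}^*+T_{12}T_{21}^*+T_1e_2T_2^*+e_2\in\Gamma_2$ works for $\mu'=1$, $\nu=11$); (ii) when you restrict $\psi$ to $C^*_r((\Gamma_\infty)_{v_0})$ you are implicitly using that the canonical copy of $C^*_r(\Gamma_\infty)$ sits inside the reduced crossed product and that $ev_{v_0}$ descends to the reduced level; both are true and routine, but worth a sentence.
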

{\color{red} }
\begin{proof}
    First, we show that the state $\psi$ in the theorem is a ground state.
    Since $C(\partial E_\infty)=\overline{\operatorname{span}}\{1, T_\mu T_\mu^*\;|\;\mu\in E^f_\infty\}$ and $\Gamma_\infty=\bigcup_{n=2}^\infty\Gamma_n$,
    it is enough to show
    \[|\psi (b\gamma_{d^f_\infty}(z)(\sum_U a_U 1_U\lambda_{g_U}))|\leq ||b||||\sum_U a_U 1_U\lambda_{g_U}||\]
    for $b\in C(\partial E_\infty)\rtimes_r\Gamma_\infty$, $\operatorname{Im}(z)\geq 0,$ clopen sets $U\subset\partial E_\infty, a_U\in\mathbb{C}$ and $g_U \in \Gamma _n$.

    For the clopen set $U$ with $v_0\in g_U^{-1}(U)$,
    we may assume that $1_U\lambda_{g_U}=1_{Z(\mu_U)\backslash \cup_{i=1}^NZ(\mu_U\nu_i)}\lambda_{g_U}$ with 
    \[g_U=T_{\mu_U} (1-\sum_{i=1}^NT_{\nu_i}T_{\nu_i}^*)+\cdots\in\Gamma_\infty, \;\text{for}\; \mu_U\in E^f_\infty.\]
    Note that $\gamma_{d^f_\infty}(z)(1_U\lambda_{g_U})=e^{-|\mu_U|\operatorname{Im}(z)}e^{i|\mu_U|\operatorname{Re}(z)}1_U\lambda_{g_U}$ holds for $U$ with $v_0\in g_U^{-1}(U)$.

Since $\psi(f)=f(v_0)$, the subalgebra $C(\partial E_\infty)$ is in the multiplicative domain of $\psi$,
and one has
\begin{align*}
    \psi (b\gamma_{d^f_\infty}(z)(\sum_U a_U 1_U\lambda_{g_U}))=&\sum_{U,\; v_0\not\in g_U^{-1}(U)}a_U\psi (b\gamma_{d^f_\infty}(z)(1_U\lambda_{g_U}))\psi(1_{g_U^{-1}(U)})\\
    &+\sum_{U,\; v_0\in g_U^{-1}(U)}a_U\psi(b1_U\lambda_{g_U})e^{-|\mu_U|\operatorname{Im}(z)}e^{i|\mu_U|\operatorname{Re}(z)}\\
    =&\sum_{U,\; v_0\in g_U^{-1}(U)}a_U\psi(b1_U\lambda_{g_U})e^{-|\mu_U|\operatorname{Im}(z)}e^{i|\mu_U|\operatorname{Re}(z)}.
\end{align*}
    Thus, the function $\psi (b\gamma_{d^f_\infty}(z)(\sum_U a_U 1_U\lambda_{g_U}))$ is bounded on $\{z\in\mathbb{C}\;|\; \operatorname{Im}(z)\geq 0\}$ and the Phragmen–Lindel\"{o}f theorem shows
    \begin{align*}
        |\psi (b\gamma_{d^f_\infty}(z)(\sum_U a_U 1_U\lambda_{g_U}))|&\leq \operatorname{sup}_{t\in\mathbb{R}}|\psi (b\gamma_{d^f_\infty}(t)(\sum_U a_U 1_U\lambda_{g_U})|\\
        &\leq \operatorname{sup}_{t\in\mathbb{R}} ||b||||\gamma_{d^f_\infty}(t)(\sum_U a_U 1_U\lambda_{g_U})||\\
        &=||b||||\sum_U a_U 1_U\lambda_{g_U}||.
    \end{align*}

    Second, we show that $\psi=\psi | _{\mathrm{C}^*_r((\Gamma _{\infty})_{v_0})}\circ ev_{v_0}\circ E$ holds for an arbitrary ground state $\psi$.


For the groupoid $\partial E_\infty\rtimes \Gamma_\infty$ and the cocycle $d^f_\infty$,
the boundary set of the cocycle $d^f_\infty$ (see \cite{LLNes}) is the singleton $\{v_0\}$ and the boundary groupoid of $d^f_\infty$ is the group $(\Gamma_\infty)_{v_0}$.
Thus, \cite[Thm. 1.4]{LLNes} implies that the pull-back \[\tilde{\psi} : C(\partial E_\infty)\rtimes \Gamma_\infty\to C(\partial E_\infty)\rtimes_r\Gamma_\infty\xrightarrow{\psi}\mathbb{C}\]
satisfies $\psi(f\lambda_g)=\tilde{\psi}(f\lambda_g)=f(v_0)\tilde{\psi} |_{C^*((\Gamma_\infty)_{v_0})}(E(\lambda_g))=\psi |_{C^*_r((\Gamma_\infty)_{v_0})}(ev_0(E(f\lambda_g)))$ for $f\in C(\partial E_\infty)$ and $g\in\Gamma_\infty$.

\end{proof}
\begin{remark}
    For $\beta<\infty$,
    the KMS condition implies $\psi(1_{Z(i)})=0$ and $e^{-\beta}=e^{-\beta}\psi(1-1_{Z(i)})=\psi(1_{Z(i)}-1_{Z(ii)})=0$.
    Thus, there are no $\gamma_{d_\infty^f}$-KMS states for $\beta<\infty$.
\end{remark}


\subsection{$\gamma_{d^f_n}$-KMS states of  $C(\partial E_n)\rtimes_r\Gamma_n$}
In this section,
we characterize the $\gamma_{d^f_n}$-KMS-states on $C(\partial E_n) \rtimes _r \Gamma _n$.


In the case of $\beta=\infty$,
the same argument as in the proof of Thm. \ref{MM} shows the following.
\begin{thm}\label{MMM}
    A state $\psi \colon C(\partial E_n)\rtimes_r\Gamma_n\to\mathbb{C}$ is a $\gamma_{d^f_n}$-ground state if and only if $\psi$ is given by
    \[\psi \colon C(\partial E_n)\rtimes_r\Gamma_n\xrightarrow{E}C(\partial E_n)\rtimes_r(\Gamma_n)_{v_0}\xrightarrow{ev_{v_0}}C^*_r((\Gamma_n)_{v_0})\xrightarrow{\varphi}\mathbb{C},\]
    \[\psi (f\lambda_g)=f(v_0)\varphi(1_{(\Gamma_n)_{v_0}}(g)\lambda_g), \quad f\in C(\partial E_n),\quad g\in\Gamma_n,\]
    where $(\Gamma_n)_{v_0}$ is the stabilizer of $v_0$, $E(f\lambda_g):=1_{(\Gamma_n)_{v_0}}(g) f\lambda_g$ is a conditional expectation and $\varphi$ is a state of $C^*_r((\Gamma_n)_{v_0})$.
\end{thm}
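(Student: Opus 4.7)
The plan is to mirror the proof of Theorem~\ref{MM} essentially verbatim, since the distinguished role of the root $v_0$ in $E_n^f$ is the same for $\Gamma_n$ as for $\Gamma_\infty$. Both the ``if'' and ``only if'' halves transfer without new obstructions.

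For the ``if'' direction, I would fix a state $\varphi$ on $C^*_r((\Gamma_n)_{v_0})$ and define $\psi$ by the stated formula. Since $\psi|_{C(\partial E_n)}=ev_{v_0}$ is a pure state, $C(\partial E_n)$ lies in the multiplicative domain of $\psi$. For $b\in C(\partial E_n)\rtimes_r\Gamma_n$ and an analytic element $a=\sum_U a_U 1_U\lambda_{g_U}$ written in the form of Lemma~\ref{tabl}, I would split the sum according to whether $v_0\in g_U^{-1}(U)$. Terms with $v_0\notin g_U^{-1}(U)$ vanish after multiplicative-domain factorization, picking up a factor $\psi(1_{g_U^{-1}(U)})=0$. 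For the surviving terms, one may normalize $U=Z(\mu_U)\setminus\bigcup_{i=1}^N Z(\mu_U\nu_i)$ and $g_U=T_{\mu_U}(1-\sum_{i=1}^N T_{\nu_i}T_{\nu_i}^*)+\cdots$, giving $\gamma_{d^f_n}(z)(1_U\lambda_{g_U})=e^{iz|\mu_U|}1_U\lambda_{g_U}$ with $|\mu_U|\ge 0$. Consequently $z\mapsto\psi(b\gamma_{d^f_n}(z)(a))$ is a finite sum of bounded exponentials $e^{iz|\mu_U|}$ times state values, hence bounded on the closed upper half-plane; Phragm\'en--Lindel\"of then yields $|\psi(b\gamma_{d^f_n}(z)(a))|\le\|b\|\|a\|$.

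For the ``only if'' direction, I would apply \cite[Thm.~1.4]{LLNes} to the \'{e}tale groupoid $\partial E_n\rtimes\Gamma_n$ with cocycle $d^f_n$. The boundary set of $d^f_n$ is exactly $\{v_0\}$: for any $x=x_1x_2\cdots\in E_n^\infty$, Lemma~\ref{lift}(2) applied with $\mu=x_2,\ \nu=x_1x_2$ produces $T_{x_2}T_{x_1x_2}^*+\cdots\in\Gamma_n$, which furnishes an arrow at $x$ of cocycle value $-1$; for any $x\in E_n^f\setminus\{v_0\}$, the transposition $(x,v_0)\in\mathfrak{S}_{E_n^f}\subset\Gamma_n$ furnishes an arrow at $x$ of cocycle value $-|x|$; while $v_0$ admits no such arrow, since $\Gamma_n(v_0)\subset E_n^f$ and every $\mathcal{G}_{E_n}$-arrow with source $v_0$ has non-negative cocycle value. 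The corresponding boundary groupoid at $v_0$ therefore collapses to the stabilizer $(\Gamma_n)_{v_0}$. Thus \cite[Thm.~1.4]{LLNes} gives a bijection between $\gamma_{d^f_n}$-ground states of the full crossed product $C(\partial E_n)\rtimes\Gamma_n$ and states of $C^*((\Gamma_n)_{v_0})$; the passage to the reduced algebras, via \cite[Prop.~2.10, 3.1]{CNes} exactly as in Theorem~\ref{M}, delivers the claimed formula.

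The only subtlety worth flagging is that $\partial E_n$ has many isolated points (the finite part $E_n^f$ is open in $\partial E_n$ for $n<\infty$), in contrast to the $\Gamma_\infty$ setting where $E_\infty^f$ is not open; however, neither the multiplicative-domain identity nor the boundary-set computation is sensitive to this distinction, so I do not expect a genuine new obstacle beyond what is handled in Theorem~\ref{MM}.
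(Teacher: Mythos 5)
Your proposal is correct and follows exactly the route the paper intends: the paper's entire proof of Theorem \ref{MMM} is the single remark that ``the same argument as in the proof of Thm.\ \ref{MM} shows the following,'' and your write-up supplies precisely that transfer, including the multiplicative-domain/Phragm\'en--Lindel\"of argument for the ``if'' direction and the verification that the boundary set of $d^f_n$ is $\{v_0\}$ with boundary groupoid $(\Gamma_n)_{v_0}$ for the ``only if'' direction via \cite[Thm.\ 1.4]{LLNes}. Your closing observation that the isolated points of $E_n^f$ cause no new difficulty is also consistent with the paper's treatment.
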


For $\beta < \infty$, we obtain the following.
\begin{thm}\label{kms}
For $\beta<\infty$,
    there exist $\gamma_{d^f_n}\text{-KMS}_\beta$-states of $C(\partial E_n)\rtimes_r\Gamma_n$ if and only if $\beta\geq \operatorname{log} n$.
    \begin{enumerate}
        \item For $\beta >\operatorname{log} n$,
        the KMS$_\beta$-state is given by
        \[\psi (f\lambda_g) :=\sum_{\mu\in E^f_n} (1-ne^{-\beta})e^{-\beta|\mu|}f(\mu) 1_{(\Gamma_n)_\mu}(g) \tau (\lambda_{g_\mu^{-1}gg_\mu}), \quad f\in C(\partial E_n),\;\; g\in\Gamma_n,\]
        where $1_{(\Gamma_n)_\mu}$ is the characteristic function of the stabilizer subgroup $(\Gamma_n)_\mu=\{g\in \Gamma_n\;|\; g(\mu)=\mu\}$, $g_\mu$ is an element satisfying $g_\mu(v_0)=\mu$, and $\tau \colon C^*_r((\Gamma_n)_{v_0})\to \mathbb{C}$ is a trace.
        
        The above presentation does not depend on the choice of $g_\mu$, and there is a one-to-one correspondence between $\gamma_{d^f_n}\text{-KMS}_{\beta}$-states on $C(\partial E_n)\rtimes_r \Gamma_n$ and tracial states of $C^*_r((\Gamma_n)_{v_0})$.
        {\color{red}}
        \item For $\beta=\operatorname{log} n$,
        the KMS state is given by
        \[\psi \colon C(\partial E_n)\rtimes_r\Gamma_n\xrightarrow{E}C(\partial E_n)\rtimes_r\mathfrak{S}_{E^f_n}\xrightarrow{Ev_{E^\infty_n}} C(E^\infty_n)\rtimes_r\mathfrak{S}_{E^f_n}=C(E^\infty_n)\otimes C^*_r(\mathfrak{S}_{E^f_n})\xrightarrow{m\otimes\tau}\mathbb{C},\]
        \[\psi (1_{Z^\infty(\mu)\lambda_g})=m(Z^\infty(\mu))\tau(1_{\mathfrak{S}_{E^f_n}}(g)\lambda_g),\]
        where we write \[Ev_{E^\infty_n} \colon C(\partial E_n)\rtimes_r\mathfrak{S}_{E^f_n}\ni 1_{Z(\mu)}\lambda_g\mapsto 1_{Z^\infty(\mu)}\lambda_g\in C(E^\infty_n)\rtimes_r\mathfrak{S}_{E^f_n},\]\[E(f \lambda_g)=1_{\mathfrak{S}_{E^f_n}}(g)f\lambda_g,\]
        and $\tau$ is a tracial state of $C^*_r(\mathfrak{S}_{E^f_n})$. 
        {\color{red}}
            \end{enumerate}
\end{thm}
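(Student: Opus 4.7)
The plan is to apply S.\ Neshveyev's characterization of KMS states on groupoid C*-algebras (\cite{Nes}) to the transformation groupoid $\partial E_n \rtimes \Gamma_n$ with cocycle $d^f_n$. Any $\gamma_{d^f_n}$-KMS$_\beta$-state $\psi$ on $C(\partial E_n) \rtimes_r \Gamma_n$ pulls back to the full crossed product and decomposes as
\[
\tilde\psi(f \lambda_g) = \int_{\partial E_n} f(x)\, 1_{(\Gamma_n)_x}(g)\, \tau_x(\lambda_g)\, dm(x),
\]
where $m$ is a probability measure on $\partial E_n$ with Radon--Nikodym cocycle $e^{-\beta d^f_n}$ and $\{\tau_x\}_{x \in \partial E_n}$ is a measurable field of tracial states on the isotropy algebras $C^*_r((\Gamma_n)_x)$ satisfying the usual equivariance.

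First I would determine $m$. Since the canonical map $q \colon \partial E_n \rtimes \Gamma_n \to \mathcal{G}_{E_n}$ sends bisections to bisections and exhausts every bisection of $\mathcal{G}_{E_n}$ (by the definition of the topological full group $\Gamma_n = [[\mathcal{G}_{E_n}]]$), the quasi-invariance condition for $d^f_n$ is equivalent to that for $d_n$, so the classification of $\gamma_{d_n}$-KMS states on $\mathcal{E}_n$ recalled in Sec.~\ref{gac} transfers verbatim: no such $m$ exists for $\beta < \log n$; the unique measure for $\beta = \log n$ is the Bernoulli measure $m_{\log n} = \bigotimes_{k=1}^{\infty}(\sum_{j=1}^n \tfrac{1}{n}\delta_j)$ concentrated on $E^\infty_n$; and the unique measure for $\beta > \log n$ is $m_\beta = \sum_{\mu \in E^f_n}(1 - n e^{-\beta})e^{-\beta|\mu|}\delta_\mu$ concentrated on the countable set $E^f_n$.

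Next I would analyze the trace field. For $\beta > \log n$, $m_\beta$ is concentrated on the countable set $E^f_n$; each stabilizer $(\Gamma_n)_\mu$ is conjugate to $(\Gamma_n)_{v_0}$ via any $g_\mu \in \Gamma_n$ with $g_\mu(v_0) = \mu$, and the Neshveyev compatibility forces $\tau_\mu = \operatorname{Ad}(g_\mu)_{*}\tau_{v_0}$, where independence of the choice of $g_\mu$ follows from the tracial property of $\tau_{v_0}$ on $C^*_r((\Gamma_n)_{v_0})$; this yields the claimed bijection with tracial states of $C^*_r((\Gamma_n)_{v_0})$. For $\beta = \log n$, $m_{\log n}$ gives full measure to the set of non-eventually-periodic $x \in E^\infty_n$. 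At such $x$ the short exact sequence $1 \to \mathfrak{S}_{E^f_n} \to (\Gamma_n)_x \to (V_n)_x \to 1$ (use that $\mathfrak{S}_{E^f_n}$ fixes every element of $E^\infty_n$ and that every element of $V_n$ fixing $x$ lifts into $\Gamma_n$) combined with the unique trace property of $(V_n)_x$ (Cor.~\ref{ari}) and \cite[Thm.~4.1.]{BKKO} shows that every trace $\tau_x$ on $C^*_r((\Gamma_n)_x)$ factors through the canonical conditional expectation onto $C^*_r(\mathfrak{S}_{E^f_n})$. To promote the measurable field $x \mapsto \tau_x|_{C^*_r(\mathfrak{S}_{E^f_n})}$ to a single trace $\tau$, I would use that for each fixed $h \in \mathfrak{S}_{E^f_n}$ the centralizer $Z_{\Gamma_n}(h)$ surjects onto $V_n$ (as announced in the introduction), together with ergodicity of $V_n \curvearrowright (E^\infty_n, m_{\log n})$, to conclude that $x \mapsto \tau_x(\lambda_h)$ is $m$-a.e.\ constant; assembling these constants across $h$ produces the claimed trace on $C^*_r(\mathfrak{S}_{E^f_n})$.

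The existence direction is then checked by verifying the $\gamma_{d^f_n}$-KMS condition for the explicit formulas on the dense subalgebra $\operatorname{span}\{1_{Z(\mu)\setminus\bigsqcup_i Z(\mu\nu_i)}\lambda_g\}$ of analytic elements, which is a combinatorial calculation using that the stated $m_\beta$ is quasi-invariant with the prescribed Radon--Nikodym cocycle and that $\tau$ is tracial. The main obstacle is the constancy step in the $\beta = \log n$ case: the isotropy groups $(\Gamma_n)_x$ are enormous (each contains an entire copy of $V_n$ on top of $\mathfrak{S}_{E^f_n}$), so one cannot simplify by restricting to a set of points with trivial stabilizer, and the delicate interplay between the unique trace property of $V_n$, the centralizer-surjection observation, and ergodicity of the Bernoulli measure is what ultimately collapses the measurable field to a single trace.
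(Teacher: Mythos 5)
Your proposal is correct and follows essentially the same strategy as the paper: determine the quasi-invariant measures by the atomic/boundary dichotomy, apply Neshveyev's decomposition (descending to the reduced algebra via Christensen--Neshveyev), use the unique trace property of the $V_n$-stabilizers together with \cite[Thm.~4.1]{BKKO} to force the isotropy traces through $C^*_r(\mathfrak{S}_{E^f_n})$, and use the commutation/centralizer lemma to collapse the measurable field to a single $\Gamma_n$-invariant trace. The only cosmetic difference is that the paper's body implements the $\beta=\log n$ uniqueness step via the KMS condition on cylinder sets (Lemmas \ref{umu}--\ref{gj}) rather than via pointwise constancy and ergodicity, but the ingredients are identical; note also that the same centralizer lemma is what supplies the $\Gamma_n$-invariance of $\tau$ needed in your existence check at $\beta=\log n$, where traciality on $\mathfrak{S}_{E^f_n}$ alone would not suffice.
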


For $\beta=\operatorname{log} n$,
we need the following lemmas.
\begin{lem}\label{umu}
Let $\psi \colon C(\partial E_n)\rtimes_r\Gamma_n\to\mathbb{C}$ be the $\gamma_{d^f_n}\text{-KMS}_{\operatorname{log}n}$-state of $C(\partial E_n)\rtimes_r\Gamma_n$.
    For $\mu\in E^f_n\backslash\{v_0\}$ and the subgroup $\pi^{-1}(\operatorname{Rist}_{V_n}(Z^\infty(\mu)^c))\subset \Gamma_n$,
    the state
    \[C^*_r(\pi^{-1}(\operatorname{Rist}_{V_n}(Z^\infty(\mu)^c)))\ni\lambda_g\mapsto \frac{\psi(1_{Z(\mu)} \lambda_g)}{\psi (1_{Z(\mu)})}=n^{|\mu|}\psi (1_{Z(\mu)}\lambda_g)\in\mathbb{C}\]
    is tracial.
    In particular,
    we have $\psi (1_{Z(\mu)}\lambda_g)=1_{\mathfrak{S}_{E^f_n}}(g)\psi (1_{Z(\mu)}\lambda_g)$.
\end{lem}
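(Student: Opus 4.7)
The plan is to deduce both assertions from a single KMS computation. As a preliminary, Neshveyev's characterization \cite{Nes} shows that $\psi|_{C(\partial E_n)}$ is a probability measure on $\partial E_n$ quasi-invariant for $d_n^f$ with Radon--Nikodym derivative $e^{-\log n \cdot d_n^f}$. Since every bisection of $\mathcal{G}_{E_n}$ extends to an element of $\Gamma_n = [[\mathcal{G}_{E_n}]]$, this measure is a $\log n$-conformal measure for $\mathcal{G}_{E_n}$, and the KMS classification of $\mathcal{E}_n$ identifies it with the product measure $m$ on $E_n^\infty$. In particular $\psi(1_{\{v\}}) = 0$ for every $v \in E_n^f$ and $\psi(1_{Z(\mu)}) = n^{-|\mu|}$; Cauchy--Schwarz then gives $\psi(1_{\{v\}}\lambda_k 1_{\{w\}}) = 0$ for all $v, w \in E_n^f$ and $k \in \Gamma_n$, so $\psi$ annihilates every element of $C(\partial E_n)\rtimes_r \Gamma_n$ whose groupoid support lies in a finite subset of $E_n^f \times \Gamma_n \times E_n^f$.

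To prove traciality, fix $g, h \in \pi^{-1}(\operatorname{Rist}_{V_n}(Z^\infty(\mu)^c))$ and decompose $\lambda_h$ as a finite sum of bisection pieces of constant cocycle, each of which is $\gamma_{d_n^f}$-analytic, so the KMS condition applies piece by piece. This yields $\psi(\lambda_h \cdot 1_{Z(\mu)}\lambda_g) = \psi(1_{Z(\mu)}\lambda_g \cdot \gamma_{d_n^f}(i\log n)(\lambda_h))$; rewriting $\lambda_h 1_{Z(\mu)} = 1_{h(Z(\mu))}\lambda_h$ and rearranging,
\[
\psi(1_{Z(\mu)}\lambda_{hg}) - \psi(1_{Z(\mu)}\lambda_{gh}) = \psi\bigl(1_{Z(\mu)}\lambda_g\bigl(\gamma_{d_n^f}(i\log n)(\lambda_h) - \lambda_h\bigr)\bigr) + \psi\bigl((1_{Z(\mu)} - 1_{h(Z(\mu))})\lambda_{hg}\bigr).
\]
Using Lemma \ref{lift} to write $h = h_0 \sigma$, where $h_0 = T_\mu T_\mu^* + \sum T_{\mu_i}T_{\nu_i}^* + \sum T_{v_k}e_n T_{w_k}^*$ with all $\mu_i, \nu_i, v_k, w_k$ incomparable with $\mu$ and $\sigma \in \mathfrak{S}_{E_n^f}$, and using that both $g$ and $h$ fix $Z^\infty(\mu)$ pointwise, one sees that $1_{Z(\mu)} - 1_{h(Z(\mu))}$ is a finite signed combination of point characters in $E_n^f$, while the non-zero-cocycle pieces of $\lambda_h$, after left multiplication by $1_{Z(\mu)}\lambda_g$, have range meeting $Z(\mu)$ only in finitely many points of $E_n^f$. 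Both error terms therefore have groupoid support in a finite subset of $E_n^f \times \Gamma_n \times E_n^f$ and vanish under $\psi$ by the preliminary; traciality follows, and the functional is positive as the restriction to $C^*_r(\pi^{-1}(\operatorname{Rist}_{V_n}(Z^\infty(\mu)^c)))$ of the state $x \mapsto n^{|\mu|}\psi(1_{Z(\mu)} x 1_{Z(\mu)})$.

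Finally, the short exact sequence
\[
1 \to \mathfrak{S}_{E_n^f} \to \pi^{-1}(\operatorname{Rist}_{V_n}(Z^\infty(\mu)^c)) \to \operatorname{Fix}_{V_n}(Z^\infty(\mu)) \to 1
\]
has amenable kernel, and by Proposition \ref{uniquetr} its quotient has trivial amenable radical, so $\mathfrak{S}_{E_n^f}$ is the amenable radical of $\pi^{-1}(\operatorname{Rist}_{V_n}(Z^\infty(\mu)^c))$. Applying \cite[Thm. 4.1]{BKKO} to the tracial state just constructed forces it to vanish on $\lambda_g$ for $g$ outside $\mathfrak{S}_{E_n^f}$, yielding the ``in particular'' statement. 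The main obstacle will be the bookkeeping in the middle step: one must verify that the special lift from Lemma \ref{lift} combined with the rigid-stabilizer hypothesis on $g$ and $h$ really does confine both KMS error terms to the finite-rank corner supported on $E_n^f$, rather than leaving a residual cylinder-supported piece on which the Cauchy--Schwarz vanishing would fail.
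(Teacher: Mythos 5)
Your proposal is correct and follows essentially the same route as the paper's proof: establish $\psi(1_{\{v\}})=0$ for $v\in E_n^f$ from the form of the quasi-invariant measure at $\beta=\log n$ (so $C_0(E_n^f)\subset\operatorname{Ker}\psi$), use the lift $h={_\mu g_\mu}\,\sigma$ with $\sigma\in\mathfrak{S}_{E_n^f}$ from Lemma \ref{lift} to confine both KMS error terms to finitely many points of $E_n^f$ and deduce traciality on the corner, and then invoke \cite[Thm. 4.1]{BKKO} with $\mathfrak{S}_{E_n^f}$ as the amenable radical of $\pi^{-1}(\operatorname{Rist}_{V_n}(Z^\infty(\mu)^c))$ to get the vanishing off $\mathfrak{S}_{E_n^f}$. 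The bookkeeping concern you flag at the end is resolved exactly as you suggest, since the special lift places all nonzero-cocycle pieces of $h$ over cylinders disjoint from $Z(\mu)$ or over finitely many points of $E_n^f$.
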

\begin{proof}
    By considering a pull-back and applying \cite[Thm. 1.3]{Nes},
    one has $\psi (1_{Z(\mu)})=e^{-\beta|\mu|}=n^{-|\mu|}$ 
    and $C_0(E^f_n)\subset \operatorname{Ker} \psi$ for $\beta=\operatorname{log} n$ because of the form of the quasi-invariant measure with the Radon--Nikodym derivative $n^-{d^f_n}$ (see also the proof of Thm. \ref{kms}).

    An arbitrary element  of $ \operatorname{Rist}_{V_n}(Z^\infty(\mu)^c)$ is given by
    \[S_\mu S_\mu^*+\sum_{j=2}^M S_{\mu_j} S_{\nu_j}^*\in V_n,\]
    and there is a lift
    \[_\mu g_\mu:=T_\mu T_\mu^*+\sum_{j=2}^MT_{\mu_j}T_{\nu_j}^*+\sum_{k=1}^NT_{v_k}e_nT_{w_k}^*\in \Gamma_n.\]
    Thus, every element of $\pi^{-1}(\operatorname{Rist}_{V_n}(Z^\infty(\mu)^c))$ is represented by
    \[(T_\mu T_\mu^*+\cdots) h={_\mu g_\mu} h\in\Gamma_n,\quad h\in \mathfrak{S}_{E^f_n}.\]
For $h\in\mathfrak{S}_{E^f_n}$,
the set $\operatorname{supp}(h):=\{\mu\in E^f_n \;|\; h(\mu)\not=\mu\}$ is finite, and one has 
\begin{align*}
    \gamma_{d^f_n}(i\beta)(1_{Z(\mu)}\lambda_{{_\mu g_\mu} h})=&\gamma_{d^f_n}(i\beta)(\lambda_{_\mu g_\mu}1_{Z(\mu)}\lambda_h)\\
    =&\gamma_{d^f_n}(i\beta)(\lambda_{_\mu g_\mu}1_{Z(\mu)\cap \operatorname{supp}(h)^c}\lambda_h)+\gamma_{d^f_n}(i\beta)(\lambda_{_\mu g_\mu} 1_{\operatorname{supp(h)}}\lambda_h)\\
    \in &(\lambda_{_\mu g_\mu}1_{Z(\mu)\cap \operatorname{supp}(h)^c}\lambda_h) + C_0(E^f_n)\lambda_{{_\mu g_\mu}h}\\
    = &1_{Z(\mu)}\lambda_{{_\mu g_\mu} h}+\lambda_{{_\mu g_\mu}h}C_0(E^f_n).
\end{align*}
    
    Thus, the KMS condition and $C_0(E^f_n)\subset \operatorname{Ker}\psi$ yield
    \begin{align*}
        \psi(1_{Z(\mu)}\lambda_{{_\mu g^1_\mu}h_1}\lambda_{{{_\mu g^2_\mu}}h_2})=&\psi (\lambda_{{_\mu g^2_\mu}h_2}1_{Z(\mu)}\lambda_{{_\mu g^1_\mu}h_1})\\
        =&\psi (\lambda_{_\mu g^2_\mu}(1_{Z(\mu)}\lambda_{h_2}+C_0(E^f_n)\lambda_{h_2})\lambda_{{_\mu g^1_\mu}h_1})\\
        =&\psi (1_{Z(\mu)}\lambda_{{_\mu g^2_\mu}h_2}\lambda_{{_\mu g^1_\mu}h_1})
    \end{align*}
(i.e., $\lambda_g\mapsto \frac{\psi(1_{Z(\mu)} \lambda_g)}{\psi (1_{Z(\mu)})}$ is a trace).
    Since $\operatorname{Rist}_{V_n}(Z^\infty(\mu)^c)$ has the unique trace property by Lem. \ref{uniquetr},
    $\mathfrak{S}_{E^f_n}$ is a maximal normal amenable subgroup of $\pi^{-1}(\operatorname{Rist}_{V_n}(Z^\infty(\mu)^c))$, and \cite[Thm. 4.1.]{BKKO}, $\psi (1_{Z(\mu)}\lambda_g)=1_{\mathfrak{S}_{E^f_n}}(g)\psi (1_{Z(\mu)}\lambda_g)$.
\end{proof}
\begin{lem}\label{ummu}
Let $\psi$ be the $\gamma_{d^f_n}\text{-KMS}_{\operatorname{log}n}$-state.
    For $g=\sum_{i=1}^MT_{\mu_i}T_{\nu_i}^*+\sum_{k=1}^NT_{v_k}e_nT_{w_k}^*\in\Gamma_n$,
    we have $\psi (1_{Z(\mu_1)}\lambda_g)=1_{\mathfrak{S}_{E^f_n}}(g)\psi (1_{Z(\mu_1)}\lambda_g)$.
\end{lem}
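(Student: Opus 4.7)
My plan is to split the assertion into three subcases according to the relation between $\mu_1$ and $\nu_1$: the case $\mu_1=\nu_1$ is handled directly by Lemma \ref{umu}, and the remaining case $\mu_1\neq\nu_1$ splits further according to whether $|\mu_1|=|\nu_1|$, each subcase being dispatched by a short KMS manipulation. Before splitting cases, I would record the preliminary observation that, since the source partition $\bigsqcup_i Z(\nu_i)\sqcup\bigsqcup_k\{w_k\}$ and the range partition $\bigsqcup_i Z(\mu_i)\sqcup\bigsqcup_k\{v_k\}$ of the bisection representing $g$ are disjoint unions, the products $1_{Z(\mu_1)}\cdot 1_{Z(\mu_i)}$ for $i\geq 2$ and $1_{Z(\mu_1)}\cdot 1_{\{v_k\}}$ all vanish. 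Hence $V:=1_{Z(\mu_1)}\lambda_g$ is a single ``leading-term'' piece of $\lambda_g$, is analytic, and satisfies $\gamma_{d_n^f}(t)(V)=e^{i(|\mu_1|-|\nu_1|)t}V$; likewise $g$ restricted to $Z(\nu_1)$ is exactly the shift coming from $T_{\mu_1}T_{\nu_1}^*$, so $g(Z(\nu_1))=Z(\mu_1)$.

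When $\mu_1=\nu_1$, the leading term $T_{\mu_1}T_{\mu_1}^*$ shows that $\pi(g)$ fixes $Z^\infty(\mu_1)$ pointwise, placing $g$ inside $\pi^{-1}(\operatorname{Rist}_{V_n}(Z^\infty(\mu_1)^c))$, so that Lemma \ref{umu} yields the conclusion immediately. I may therefore assume $\mu_1\neq\nu_1$; the cylinder shift $\nu_1 y\mapsto\mu_1 y$ already moves infinitely many finite paths, so $g\notin\mathfrak{S}_{E^f_n}$ automatically, and the remaining task is to prove $\psi(V)=0$. For the subcase $|\mu_1|\neq|\nu_1|$, applying the KMS$_{\log n}$ condition to $V$ itself yields
\[\psi(V)=\psi(\gamma_{d_n^f}(i\log n)(V))=n^{-(|\mu_1|-|\nu_1|)}\psi(V),\]
and since the scalar differs from $1$, $\psi(V)=0$ follows.

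The last subcase, $|\mu_1|=|\nu_1|$ with $\mu_1\neq\nu_1$, is the only one requiring a new idea, though still a short one; here $\mu_1$ and $\nu_1$ are distinct words of equal length, hence incomparable, so $Z(\mu_1)\cap Z(\nu_1)=\emptyset$. My key move is to test $V$ against the $\gamma_{d_n^f}$-fixed projection $a=1_{Z(\nu_1)}$; the KMS identity $\psi(aV)=\psi(Va)$ then reads
\[\psi(1_{Z(\mu_1)\cap Z(\nu_1)}\lambda_g)=\psi(1_{Z(\mu_1)}\cdot 1_{g(Z(\nu_1))}\lambda_g),\]
the left side vanishing by disjointness and the right side equaling $\psi(V)$ because $g(Z(\nu_1))=Z(\mu_1)$; this forces $\psi(V)=0$. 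The only (small) obstacle throughout is spotting this asymmetry between $Z(\nu_1)$ as a subset of $\partial E_n$ (disjoint from the range $Z(\mu_1)$) and its $g$-image $g(Z(\nu_1))=Z(\mu_1)$ (which coincides with the range); once seen, no input beyond Lemma \ref{umu} and the KMS condition is required.
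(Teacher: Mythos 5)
Your proof is correct and follows essentially the same route as the paper's: the paper likewise applies the KMS condition once against $1$ (forcing $\psi(1_{Z(\mu_1)}\lambda_g)=n^{|\nu_1|-|\mu_1|}\psi(1_{Z(\mu_1)}\lambda_g)$) and once against $1_{Z(\nu_1)}$ (producing $1_{Z(\mu_1)\cap Z(\nu_1)}$ on one side), concluding $\psi(1_{Z(\mu_1)}\lambda_g)=\delta_{\mu_1,\nu_1}\psi(1_{Z(\mu_1)}\lambda_g)$, and then invokes Lemma \ref{umu} when $\mu_1=\nu_1$ exactly as you do. Your explicit three-way case split and the preliminary observations about $g(Z(\nu_1))=Z(\mu_1)$ and analyticity are just a more detailed write-up of the same argument.
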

\begin{proof}
    The KMS condition yields
    \[\psi ((1_{Z(\mu_1)}\lambda_g) 1)=n^{|\nu_1|-|\mu_1|}\psi (1 (1_{Z(\mu_1)}\lambda_g)),\]
    \[\psi ((1_{Z(\mu_1)}\lambda_g)1_{Z(\nu_1)})=n^{|\nu_1|-|\mu_1|}\psi (1_{Z(\nu_1)\cap Z(\mu_1)}\lambda_g).\]
    Thus,
    one has $\psi (1_{Z(\mu_1)}\lambda_g)=\delta_{\mu_1, \nu_1}\psi (1_{Z(\mu_1)}\lambda_g)$.
    If $\mu_1=\nu_1$, Lem. \ref{umu} shows $\psi (1_{Z(\mu_1)}\lambda_g)=1_{\mathfrak{S}_{E^f_n}(g)}\psi (1_{Z(\mu_1)}\lambda_g)$.
    Since $\mu_1\not=\nu_1$ implies $g\not\in\mathfrak{S}_{E^f_n}$, we complete the proof.
\end{proof}
\begin{lem}\label{gj}
Fix an arbitrary element $h\in\mathfrak{S}_{E^f_n}$.
    For any $g\in\Gamma_n$,
    there exist $c_h(g)\in \Gamma_n, h(g)\in\mathfrak{S}_{E^f_n}$ satisfying
    \[g=c_h(g)h(g),\quad c_h(g)h=hc_h(g).\]
\end{lem}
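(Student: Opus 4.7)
The plan is to find $h(g) \in \mathfrak{S}_{E^f_n}$ which conjugates $h$ to $h' := g^{-1} h g$; once such an $h(g)$ is produced, setting $c_h(g) := g\, h(g)^{-1}$ gives both $g = c_h(g) h(g)$ and
\[
c_h(g)^{-1} h\, c_h(g) = h(g)\,(g^{-1} h g)\, h(g)^{-1} = h(g)\bigl(h(g)^{-1} h\, h(g)\bigr) h(g)^{-1} = h,
\]
which is the required commutation $c_h(g) h = h c_h(g)$. So the whole problem reduces to constructing $h(g)$.

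The key preliminary observation is that $h' = g^{-1} h g$ already lies in $\mathfrak{S}_{E^f_n}$ and has the same cycle type as $h$. Normality of $\mathfrak{S}_{E^f_n} = \ker \pi$ in $\Gamma_n$ (Lemma \ref{quotientGamma}) gives $h' \in \mathfrak{S}_{E^f_n}$. Moreover, the action of $\Gamma_n$ on $\partial E_n$ preserves the set of isolated points $E^f_n$ and restricts there to the basis permutation $\delta_\mu \mapsto \delta_{g(\mu)}$ of the Fock representation, as already used in the proof of Lemma \ref{quotientGamma}. Hence $h'$ is obtained from $h$ by conjugation by the bijection $g|_{E^f_n} \in \operatorname{Sym}(E^f_n)$, so $|\operatorname{supp}(h)| = |\operatorname{supp}(h')|$ and the two permutations share the same multiset of non-trivial cycle lengths.

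It remains to realise this conjugacy by a \emph{finite} permutation. Set $F := \operatorname{supp}(h) \cup \operatorname{supp}(h') \subset E^f_n$, a finite set. Both $h|_F$ and $h'|_F$ are permutations of $F$ with identical cycle type (the common structure described above, together with the same number $|F| - |\operatorname{supp}(h)|$ of fixed points on each side). Classical conjugacy in the finite symmetric group $\operatorname{Sym}(F)$ then yields $\sigma \in \operatorname{Sym}(F)$ with $\sigma^{-1} h|_F \sigma = h'|_F$; extending $\sigma$ by the identity on $E^f_n \setminus F$ produces the desired $h(g) \in \mathfrak{S}_{E^f_n}$ satisfying $h(g)^{-1} h\, h(g) = g^{-1} h g$. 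The substantive content of the argument is this cycle-matching step inside a finite symmetric group; the rest is formal.
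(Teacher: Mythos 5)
Your proof is correct, but it proceeds differently from the paper's. You reduce everything to the purely group-theoretic fact that $h$ and $h':=g^{-1}hg$ are two finitely supported permutations of $E^f_n$ with the same cycle type (conjugation by the bijection $g|_{E^f_n}$ preserves cycle structure), hence are conjugate by an element $\sigma$ of the \emph{finitary} symmetric group supported on $\operatorname{supp}(h)\cup\operatorname{supp}(h')$; then $h(g):=\sigma$ and $c_h(g):=g\sigma^{-1}$ do the job, and the verification $c_h(g)^{-1}hc_h(g)=h(g)h'h(g)^{-1}=h$ is formal. The paper instead works with the explicit bisection presentation of $g$: it subdivides $\sum_i T_{\mu_i}T_{\nu_i}^*$ until all cylinder words have length $\geq N$, which forces the finite set $F\supset\operatorname{supp}(h)$ into the singleton (rank-one) part of both the domain and range partitions, and then defines $c_h(g)$ by replacing the singleton part so that $c_h(g)$ fixes $F$ pointwise and preserves $F^c$; commutation with $h$ is then immediate, and $h(g)=c_h(g)^{-1}g\in\ker\pi=\mathfrak{S}_{E^f_n}$. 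Your argument is more elementary and avoids manipulating the $\Gamma_n$-presentations entirely, at the cost of invoking (and correctly justifying) the conjugacy criterion in an infinite symmetric group restricted to finitary permutations; the paper's construction yields the slightly stronger normal form in which $c_h(g)$ is the identity on a prescribed finite set containing $\operatorname{supp}(h)$, though nothing beyond the stated commutation is needed where the lemma is applied (Cor.~\ref{uhoo}). Both constructions produce $c_h(g)$ with $\pi(c_h(g))=\pi(g)$, so they are interchangeable for the purposes of the paper.
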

\begin{proof}
Since $h\in\mathfrak{S}_{E^f_n}$,
there is a finite set $F\subset E^f_n$ with $h\in\mathfrak{S}_F\subset\mathfrak{S}_{E^f_n}$.
    There is $N\in\mathbb{N}$ satisfying \[F \subset \{\mu\in E^f_n\;|\; |\mu|<N\}=(\bigsqcup_{|\nu|=N}Z(\nu))^c.\]
    For any $g\in\Gamma_n$,
    one has a presentation
    \[g=\sum_{i=1}^MT_{\mu_i}T_{\nu_i}^*+\sum_{k=1}^NT_{v_k}e_n T_{w_k}^*.\]
    Applying the following subdivisions
    \begin{align*}T_{\mu_i}T_{\nu_i}^*=&T_{\mu_i}(\sum_{i=1}^nT_iT_i^*+e_n)T_{\nu_i}^*\\
    =&T_{\mu_i}(\sum_iT_i(\sum_{j=1}^nT_jT_j^*+e_n)T_i^*+e_n)T_{\nu_i}^*\\
    =&T_{\mu_i}(\sum_{i}T_i(\sum_jT_j(\cdots(\sum_{l=1}^nT_lT_l^*+e_n)\cdots)T_j^*+e_n)T_i^*+e_n)T_{\nu_i}^*,
    \end{align*}
    we may assume that
    \[|\mu_i|, |\nu_i|\geq N,\quad \text{for}\; i=1,\cdots, M.\]
    This implies
    \[F\subset (\bigsqcup_{i=1}^MZ(\mu_i))^c=\{v_k\}_{k=1}^N,\quad F \subset (\bigsqcup_{i=1}^MZ(\nu_i))^c=\{w_k\}_{k=1}^N.\]
    Thus, one can take a partition
    \[\{v_k\}_{k=1}^N=F\sqcup \{v_s\}_{s=1}^L,\quad \{w_k\}_{k=1}^N=F\sqcup \{w_s\}_{s=1}^L.\]
    Define
    \[c_h(g):=\sum_{i=1}^MT_{\mu_i}T_{\nu_i}^*+\sum_{v\in F }T_ve_nT_v^*+\sum_{s=1}^LT_{v_s}e_nT_{w_s}^*\in \Gamma_n.\]
    It is obvious to see
    \[\pi(g)=\sum_{i=1}^MS_{\mu_i}S_{\nu_i}^*=\pi (c_h(g)), \quad h(g):=c_h(g)^{-1}g\in\mathfrak{S}_{E^f_n}.\]
    Note that $h( F)\subset F$, $c_h(g)|_F={\rm id}$ (i.e., $c_h(g)(F^c)=F^c$ ).
    Thus, one has
    \[c_h(g) h(v)=c_h(g)(h(v))=h(v),\quad hc_h(g)(v)=h(c_h(g)(v))=h(v),\;\text{for}\;v\in F,\]
    \[c_h(g) h(\mu)=c_h(g)(h(\mu))=c_h(g)(\mu),\quad hc_h(g)(\mu)=h(c_h(g)(\mu))=c_h(g)(\mu),\;\text{for}\;\mu\not\in F\]
    (i.e., $c_h(g)h=hc_h(g)$).
\end{proof}
\begin{cor}\label{uhoo}
    Every tracial state $\tau \colon C^*_r(\mathfrak{S}_{E^f_n})\to\mathbb{C}$ is $\Gamma_n$-invariant.
\end{cor}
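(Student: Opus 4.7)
The plan is to reduce the $\Gamma_n$-invariance of an arbitrary trace $\tau$ on $C^*_r(\mathfrak{S}_{E^f_n})$ to the tracial property of $\tau$ on $\mathfrak{S}_{E^f_n}$ itself, using the decomposition produced in Lemma \ref{gj}. Fix $h \in \mathfrak{S}_{E^f_n}$ and $g \in \Gamma_n$; it suffices to show $\tau(\lambda_{ghg^{-1}}) = \tau(\lambda_h)$.

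First I would apply Lemma \ref{gj} to write $g = c \cdot k$ with $c := c_h(g) \in \Gamma_n$ commuting with $h$ (equivalently, $chc^{-1} = h$) and $k := h(g) \in \mathfrak{S}_{E^f_n}$. Inserting $c^{-1}c = e$ twice and using the commutation relation, I would compute
\begin{align*}
g h g^{-1} = c k h k^{-1} c^{-1} &= (c k c^{-1})(c h c^{-1})(c k^{-1} c^{-1}) \\
&= (c k c^{-1})\, h\, (c k c^{-1})^{-1}.
\end{align*}
This is the crucial identity: the outer copies of $c$ are absorbed by the middle factor $chc^{-1} = h$ and survive only as inner conjugators acting on $k$.

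The remaining step is to observe that $\mathfrak{S}_{E^f_n}$ is normal in $\Gamma_n$, since it is the kernel of $\pi \colon \Gamma_n \to V_n$ (Lem. \ref{quotientGamma}). Because $k \in \mathfrak{S}_{E^f_n}$, the conjugate $c k c^{-1}$ again lies in $\mathfrak{S}_{E^f_n}$, so $ghg^{-1}$ is an inner conjugate of $h$ by an element of $\mathfrak{S}_{E^f_n}$. The tracial property of $\tau$ on $C^*_r(\mathfrak{S}_{E^f_n})$ then yields $\tau(\lambda_{ghg^{-1}}) = \tau(\lambda_h)$ immediately.

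The only subtle point is recognizing the need to invoke Lemma \ref{gj}: although $c$ itself does not lie in $\mathfrak{S}_{E^f_n}$, where $\tau$'s trace property is available, the commutation $ch = hc$ lets us move $c$ off $h$ and onto $k$, producing a conjugator $ckc^{-1}$ that does lie in $\mathfrak{S}_{E^f_n}$ thanks to normality. Beyond this rearrangement, no additional input is required.
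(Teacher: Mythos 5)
Your proof is correct and follows essentially the same route as the paper: both invoke Lemma \ref{gj} to factor $g$ as a product of an element commuting with $h$ and an element of $\mathfrak{S}_{E^f_n}$, and then reduce to the tracial property of $\tau$ under conjugation by an element of $\mathfrak{S}_{E^f_n}$. The paper computes $g^{-1}hg = h(g)^{-1}hh(g)$ directly, which makes the conjugator $h(g)\in\mathfrak{S}_{E^f_n}$ visible without your extra appeal to normality, but this is only a cosmetic difference.
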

\begin{proof}
    For $h\in\mathfrak{S}_{E^f_n}, g\in\Gamma_n$,
    Lem. \ref{gj} shows
    \[\tau(\lambda_{g^{-1}hg})=\tau(\lambda_{h(g)^{-1}c_h(g)^{-1}hc_h(g)h(g)})=\tau (\lambda_{h(g)^{-1}hh(g)})=\tau(\lambda_h).\]
\end{proof}

\begin{proof}[{Proof of Thm. \ref{kms}}]
First, we show (1).
For $\beta>\log n$,
a quasi-invariant measure with the Radon--Nikodym cocycle $e^{-\beta d^f_n}$ is given by
\[\sum_{\mu \in E^f_n}(1-ne^{-\beta})e^{-\beta |\mu|}\delta_\mu,\]
where $\delta_\mu : C(\partial E_n)\ni f\mapsto f(\mu)\in\mathbb{C}$ is the dirac measure.
If a quasi-invariant measure $m$ with the Radon--Nikodym cocycle $e^{-\beta d^f_n}$ satisfies $m(\{\mu\})\not=0$ for some $\mu\in E^f_n$, one has 
\[m(\{\mu\})=m(\{v_0\})e^{-\beta |\mu|}.\]
For the measurable sets $Z(\{i\})\backslash E^f_n$, ($i=1, \cdots, n$),
one has
\[m(Z(\{i\})\backslash E^f_n)=e^{-\beta}m(\partial E_n\backslash E^f_n),\quad \partial E_n\backslash E^f_n=\bigsqcup_{i=1}^nZ(\{i\})\backslash E^f_n,\]
and the assumption $\beta>\log n$ implies $m(\partial E_n\backslash E^f_n)=0$.
Thus, we conclude that 
\[\sum_{\mu \in E^f_n}(1-ne^{-\beta})e^{-\beta |\mu|}\delta_\mu\]
is the unique quasi-invariant measure with the Radon--Nykodim cocycle $e^{-\beta d^f_n}$ for $\beta>\log n$.

Let $\psi : C(\partial E_n)\rtimes_r \Gamma_n\to\mathbb{C}$ be a $\gamma_{d^f_n}-KMS_\beta$ state ($\beta>\log n$), and let $\tilde{\psi} : C(\partial E_n)\rtimes\Gamma_n \to C(\partial E_n)\rtimes_r \Gamma_n\xrightarrow{\psi}\mathbb{C}$ be its pull back.
By \cite[Cor. 1.4.]{Nes},
one has 
\[\tilde{\psi}(f\lambda_g) =\sum_{\mu \in E^f_n}(1-ne^{-\beta})e^{-\beta |\mu|}\delta_\mu f(\mu)\tilde{\tau}(\lambda_{{g_\mu}^{-1}gg_\mu}), \;\; f\in C(\partial E_n), \; g_\mu\in\Gamma_n, \;\; g_\mu (v_0)=\mu\in \partial E_n\]
for a tracial state $\tilde{\tau}$ on  $C^*((\partial E_n\rtimes \Gamma_n)_{v_0}^{v_0})=C^*((\Gamma_n)_{v_0})$.
Since $\tilde{\psi}$ comes from $\psi$,  \cite[Prop. 3.1.]{CNes} shows that $\tilde{\tau}$ factor through $C^*_r((\Gamma_n)_{v_0})$ (i.e., there exists  a tracial state $\tau$ satisfying $\tilde{\tau} : C^*((\Gamma_n)_{v_0})\to C_r^*((\Gamma_n)_{v_0})\xrightarrow{\tau} \mathbb{C}$.
This proves (1).

Next, we will show (2).
The same computation as above (i.e., $m(\{\mu\})=m(\{v_0\})e^{-\beta|\mu|}$) shows that there is the unique quasi-invariant measure with the Radon--Nykodim cocycle $e^{-\beta d^f_n}=n^{-d^f_n}$ given by 
\[m : C(\partial E_n)\to C(\partial E_n\backslash E_n^f)=C(\partial O_n)\subset M_{n^\infty}\xrightarrow{\text{trace}}\mathbb{C}.\]
Let $\tau : C^*_r(\mathfrak{S}_{E_n^f})\to \mathbb{C}$ be a tracial state.
By Cor. \ref{uhoo},
the composing $\tau$ and the conditional expectation $C^*_r(\Gamma_n)\to C^*_r(\mathfrak{S}_{E_n^f})$ gives a tracial state of $C^*_r(\Gamma_n)$ which we also denote by $\tau$.
Consider the constant measurable field $\{m, \{\tau |_{C^*_r((\partial E_n\rtimes\Gamma_n)_x^x)}\}_{x\in \partial E_n}\}$.
Since $\tau (g\cdot g^{-1})=\tau(\cdot)$ for $g\in\Gamma_n$,
\cite[Thm. 1.3.]{Nes} and \cite[Prop. 2.10, 3.1.]{CNes} shows that the state $\psi=\int_{\partial E_n}\tau dm$ is a $\gamma_{d^f_n}-KMS_{\log n}$ state.

Finally, Lem. \ref{ummu} shows that every $\gamma_{d^f_n}-KMS_{\log n}$-KMS state $\psi$ must satisfy $\psi (1_{Z^\infty(\mu)\lambda_g})=m(Z^\infty(\mu))\tau(1_{\mathfrak{S}_{E^f_n}}(g)\lambda_g)$, and this completes the proof.

\end{proof}


\begin{thebibliography}{99}
\bibitem{SB} K. A. Brix and E. Scarparo; C*-simplicity and representations of topological full groups of groupoids. J. Funct. Anal. 277(2019), 2981--2996.
\bibitem{BCW} N. Brownlowe, T. Carlsen, and M. Whittaker; Graph algebras and
orbit equivalence. Ergodic Theory Dynam. Systems, 37.2 (2017), 389–417.
\bibitem{BO} N. Brown and N. Ozawa; C*-algebras and finite-dimensional approximation, Graduate Studies in Mathematics, 88. American Mathematical Society, 2008.
\bibitem{BKKO} E. Breuillard, M. Kalantar, M. Kennedy, and N. Ozawa; C*-simplicity and the unique trace property for discrete groups, Publ. Math. Inst. Hautes \'{E}tudes Sci., 126(2017), 35--71.
\bibitem{LBMB} A. Le Boudec and N. Matte Bon; Subgroup dynamics and C*-simplicity of groups of homeomorphisms. Ann. Sci. \'{E}c. Norm. Sup\'{e}r. (4), 51.3 (2018), 557--602.
\bibitem{CNes} J. Christensen and S. Neshveyev, (Non)exotic Completions of the Group Algebras of Isotropy Groups, Int. Math. Res. Not. IMRN, 19(2022), 15155–-15186.
\bibitem{Cu} J. Cuntz; Simple C*-algebras generated by isometries, Comm. Math. Phys. 57 (1977), 173--185.
\bibitem{FEW} M. Enomoto, M. Fujii and Y. Watatani, KMS states for gauge actions on $\mathcal{O}_A$, Math. Japon. 29, no 4 (1984), 607--619.
\bibitem{Evans} D.E. Evans; On $O_n$. Publ. Res. Inst. Math. Sci. 16 (1980), 915–927.
\bibitem{GT} E. Gardella and O. Tanner; Generalizations of Thompson’s group $V$ arising from purely infinite groupoids, Preprint, arXiv:2302.04078.
\bibitem{Hig} G. Higman; Finitely presented infinite simple groups, Notes on Pure Mathematics, 8 (1974), Department of Pure Mathematics, S.G.S. Department of Mathematics, I.A.S. Australian National University, Canberra.
\bibitem{LRS} A. an Huef, M. Laca, I. Raeburn and A. Sims, KMS states on the C*-algebras of finite graphs, J. Math. Anal. Appl. 405, no 2 (2013), 388-399.
\bibitem{LLNes} M. Laca, N. S. Larsen and S. Neshveyev, Ground states of groupoid C*-algebras, phase transitions and arithmetic subalgebras for Hecke algebras, J. Geom. Phys. 136(2019), 268--283.
\bibitem{NY} M. Laca, S. Neshveyev and M. Yamashita; Crystallization of C*-algebras, Comun. Math. Phys. 406: 49 (2025). 
\bibitem{XinLi} X. Li; Ample Groupoids, Topological Full groups, Algebraic K-theory Spectra and Infinite Loop Spaces. Forum of Mathematics, Pi. 13 (2025), e9.
\bibitem{matu2} H. Matui; Topological full groups of one-sided shifts of finite type, J. Reine Angew. Math., 705 (2015), 35--84.
\bibitem{matui} H. Matui; Homology and topological full groups of \'{e}tale groupoids on totally disconnected spaces, Proc. London Math. Soc. 104 (2012), 27--56.
\bibitem{Nek} V. Nekrashevych; Cuntz--Pimsner algebras of group actions, J. Operator Theory 52(2004), 223--249.
\bibitem{Nes} S. Neshveyev; KMS states on the $\mathrm{C}^*$-algebras of non-principal groupoids, J. Operator Theory 70.2 (2013), 513–530.
\bibitem{NO} P. Nyland and E. Ortega; Topological full groups of ample groupoid with applications to graph algebras. Internat. J. Math. 30.04 (2019), 1950018.
\bibitem{NO2} P. Nyland and E. Ortega; Matui's AH conjecture for graph groupoids. Doc. Math. 26 (2021), 1679–1727.
\bibitem{OP} D. Olesen and G. K. Pedersen; Some $\mathrm{C}^*$-dynamical systems with a single KMS state. Math. Scand. 42(1978), 111--118.
\bibitem{P} G. K. Pedersen; C*-algebras and their automorphism groups. (2nd ed.), London Mathematical Society Monographs, 14, Academic Press, 2018.
\bibitem{REN} J. Renault; A groupoid approach to $\mathrm{C}^*$-algebras, Lecture Notes in Mathematics. Springer, 1980.
\bibitem{REN2} J. Renault; Cartan subalgebras in $\mathrm{C}^*$-algebras, Irish Math. Soc. Bull. 61(2008), 29--63.
\bibitem{VC} J. Christensen and S. Vaes, KMS spectra for group actions on compact spaces, Commun. Math. Phys. 390 (3) (2022), 1--27.
\end{thebibliography}
\end{document}